\documentclass[11pt,reqno,twoside]{amsart}

\usepackage{graphicx}
\usepackage{lmodern} 
\usepackage[T1]{fontenc}
\usepackage{float}
\usepackage{subfigure}
\usepackage{amsmath,mathdots,amssymb,latexsym,amsthm,mathrsfs,epsfig}
\usepackage{dsfont}
\usepackage{xfrac}
\usepackage{multirow}
\usepackage{breqn}
\usepackage{cite}
\usepackage{enumitem}
\usepackage{environ}
\usepackage[colorlinks=true,linkcolor=blue,citecolor=red,urlcolor=black]{hyperref}
\usepackage{multicol}
\usepackage{mathtools}

\NewEnviron{myequation}{
	\begin{equation}
		\scalebox{1.1}{$\BODY$}
	\end{equation}
}

\usepackage{pifont}

\usepackage{color}
\usepackage{soul}

\renewcommand{\baselinestretch}{1.2}

\makeatletter

\newcommand{\single}{\let\CS=\@currsize\renewcommand{\baselinestretch}{1.1}\tiny\CS}
\newcommand{\singb}{\let\CS=\@currsize\renewcommand{\baselinestretch}{1}\tiny\CS}
\newcommand{\singa}{\let\CS=\@currsize\renewcommand{\baselinestretch}{1.2}\tiny\CS}
\newcommand{\oneandahalfspacing}{\let\CS=\@currsize\renewcommand{\baselinestretch}{1.5}\tiny\CS}
\newcommand{\singlespacing}{\let\CS=\@currsize\renewcommand{\baselinestretch}{1.6}\large\CS}

\newcommand{\bc}{\begin{center}}
\newcommand{\ec}{\end{center}}
\newcommand{\be}{\begin{eqnarray}}
\newcommand{\ee}{\end{eqnarray}}

\newcommand{\Hom}{\operatorname{Hom}}

\newcommand{\Dim}{\operatorname{dim}}
\newcommand{\diag}{\operatorname{diag}}
\newcommand{\Irr}{\operatorname{Irr}}
\newcommand{\Ind}{\operatorname{Ind}}

\newcommand{\Rep}{\operatorname{Rep}}

\newcommand{\beano}{\begin{eqnarray*}}
\newcommand{\eeano}{\end{eqnarray*}}

\newcommand{\ba}{\begin{array}}
\newcommand{\ea}{\end{array}}

\makeatother

\usepackage[
    a4paper,
    textwidth=6.5in,
    textheight=8.9in,
    centering
]{geometry}

\theoremstyle{plain}
\newtheorem{theorem}{Theorem}[section]
\newtheorem{corollary}[theorem]{Corollary}
\newtheorem{lemma}[theorem]{Lemma}

\newtheorem{conj}[theorem]{Conjecture}

\theoremstyle{plain}
\newtheorem{definition}{Definition}[section]

\theoremstyle{remark}
\newtheorem{remark}[theorem]{Remark}

\numberwithin{equation}{section}

\DeclareMathOperator{\D}{D}

\DeclareMathOperator{\GL}{GL}
\DeclareMathOperator{\SL}{SL}
\DeclareMathOperator{\Sp}{Sp}
\DeclareMathOperator{\St}{St}
\DeclareMathOperator{\Nrd}{Nrd}
\DeclareMathOperator{\JL}{JL}

\DeclareMathOperator{\Par}{P}

\DeclareMathOperator{\G}{G}
\DeclareMathOperator{\Ha}{H}

\DeclareMathOperator{\F}{F}

\DeclareMathOperator{\M}{M}
\DeclareMathOperator{\Seg}{Seg}
\DeclareMathOperator{\V}{V}

\DeclareMathOperator{\Ad}{Ad}

\allowdisplaybreaks

\begin{document}

\title[Representations of $\GL_n(\D)$]
{On Representations of $\GL_n(\D)$ admitting a generalized linear period}

\author[Prem Dagar]{Prem Dagar}
\address{Department of Mathematics,
Indian Institute of Science Education and Research Tirupati,
Tirupati 517619, India}
\email{dagarprem5@gmail.com, prem@labs.iisertirupati.ac.in}

\author[Hariom Sharma]{Hariom Sharma}
\address{Department of Mathematics,
Indian Institute of Technology Bombay,
Mumbai 400076, India}
\email{hariomshrma97@gmail.com, hariom@math.iitb.ac.in}

\subjclass[2020]{Primary 22E35, 22E50; Secondary 11F70}

\keywords{Distinguished representations;
generalized linear periods; Jacquet--Langlands transfer;
Langlands parameter; $L$-packets; quaternion division algebra.}

\date{}

\maketitle

\begin{abstract}

      Let $\mathrm{D}$ be a quaternion division algebra over a non-Archimedean local field $\mathrm{F}$ of characteristic zero, and let $\mathrm{G}_n=\mathrm{GL}_n(\mathrm{D})$. We consider the subgroup $\mathrm{H}_{1,n-1}$ of $\mathrm{G}_n$ consisting of block-diagonal matrices of the form $\mathrm{diag}(g_1,g_2)$, where $g_1\in\mathrm{G}_1$ and $g_2\in\mathrm{G}_{n-1}$. For $s\in\mathbb{R}$, let $\chi_s$ be the character of $\mathrm{H}_{1,n-1}$ given by $\chi_s(\mathrm{diag}(g_1,g_2)) = \nu(g_1)^{2s}\nu(g_2)^{-2s}$.  In this article, we classify the irreducible smooth representations of $\mathrm{G}_n$ for $n=3$ and $n=4$ that admit a generalized linear period with respect to $(\mathrm{H}_{1,n-1},\chi_s)$. Motivated by these results, we propose a conjecture that provides a complete classification of  irreducible smooth representations of $\mathrm{G}_n$ admitting such a period for every $n>2$. Assuming this conjecture, we further characterize the representations admitting generalized linear periods in terms of their Langlands parameters. More precisely, we prove that an irreducible smooth representation $\pi$ of $\G_n$ admits a generalized linear period with respect to $(\Ha_{1,n-1},\chi_s)$ if and only if its Langlands parameter $\mathfrak{L}(\pi)$ admits a Weil--Deligne subrepresentation isomorphic to $\mathfrak{L}(\nu^{-2s})$, the $(2n-4)$-dimensional Langlands parameter of the character $\nu^{-2s}$ of $\G_{n-2}$,
      such that the four-dimensional quotient $\mathfrak L(\pi)/\mathfrak L(\nu^{-2s})$ is the Langlands parameter of either the trivial representation $\mathds{1}_2$ of $\G_2$, in which case $s=\pm\frac{n-2}{2};$ 
      or an irreducible infinite-dimensional $\Ha_{1,1}$-distinguished representation of $\G_2$. 
      Furthermore, we verify the Lapid--Prasad conjecture in this setting, which asserts that the $L$-packet of an irreducible representation of $\G_n$ admitting a linear period with respect to $\Ha_{1,n-1}$ remains invariant under the functor  $\rho\mapsto\widetilde{\rho}^{\,\theta}$, where $\widetilde{\rho}$ denotes the contragredient representation and $\rho^\theta$ denotes the twist of $\rho$ by the involution $\theta$ defining the symmetric pair $(\mathrm{G}_n,\mathrm{H}_{1,n-1})$.

\end{abstract}

\makeatletter
\def\@setcopyright{}
\def\@serieslogo{}
\def\serieslogo{}
\makeatother
\maketitle

\section{Introduction}\label{intro}

       Let $G$ be an $\ell$-group and $H$ a closed subgroup of $G$. For a smooth representation $(\pi,\V)$ of $G$ and a character $\chi$ of $H$, the space $\Hom_H(\pi,\chi)$ consists of linear forms $l:\V\to\mathbb{C}$ satisfying $l(\pi(h)v)=\chi(h)l(v)$ for all $h\in H$ and $v\in\V$.  For a smooth representation $\pi$ of $G$, we say that $\pi$ is $(H,\chi)$-distinguished if $\Hom_H(\pi,\chi)\neq 0$ and simply $H$-distinguished if $\chi$ is trivial. The distinction problem asks for a characterization of the $(H,\chi)$-distinguished spectrum. Three questions arise: which representations $\pi$ admit a nonzero $(H,\chi)$-equivariant functional; what is the dimension of the space $ \operatorname{Hom}_{H}(\pi, \chi)$; and in what manner $(H,\chi)$-distinction is reflected in the arithmetic invariants attached to $\pi$, namely its Langlands parameter $\mathfrak{L}(\pi)$,  $L$-functions, and Arthur packets. Questions of this type constitute the local and global counterparts of the relative Langlands program, in which automorphic representations are studied by means of $\chi$-twisted periods attached to a spherical variety $X = H \backslash G$, rather than in terms of the group $G$ alone.

      We next formulate the distinction problem in our setting. Let $\F$ be a non-Archimedean local field of characteristic zero with finite residue field, and let $\D$ be the quaternion division algebra over $\F$. 
      For each $n \in \mathbb{N}$, set $\G_n=\GL_n(\D)$. We adopt the convention that $\G_0$ is the trivial group. Define the character $\nu$ of $\G_n$ by  $\nu(g)=\left|\Nrd_{\D/\F}(g)\right|_{\F}$, for $g\in\G_n$, where $\Nrd_{\D/\F}:\G_n\to\F^\times$ denotes the reduced norm and $|\cdot|_{\F}$ the normalized absolute value on $\F$. Let $\mathds{1}_n$ denote the trivial character of $\G_n$. For $n=1$, we have $\nu_{\mathds{1}_1}=\nu^2$, where $\nu_{\mathds{1}_1}$ is the character of $\G_1$ associated to the trivial character $\mathds{1}_1$ introduced in Section~\ref{ZLC}.
      Let $p$ and $q$ be nonnegative integers with $p+q=n$. Consider the subgroup $\Ha_{p,q}$ of $\G_n$ given by $$\Ha_{p,q}=\{\diag(g_1,g_2):g_1\in\G_p,\ g_2\in\G_q\}.$$ For $s\in\mathbb{R}$, define a character $\chi_s$ of $\Ha_{p,q}$ by $$\chi_s(\diag(g_1,g_2))=\nu(g_1)^{2s}\nu(g_2)^{-2s}.$$ 
      A smooth representation $\pi$ of $\G_n$ is said to admit a generalized linear period with respect to $(\Ha_{p,q},\chi_s)$ if $\Hom_{\Ha_{p,q}}(\pi,\chi_s)\neq0$, and simply a linear period with respect to $\Ha_{p,q}$ if $s=0$.
      The study of such periods naturally leads to questions concerning their existence and uniqueness. For $\GL_n(\F)$, Jacquet and Rallis~\cite[Theorem~1.1]{jacquet1996uniqueness} established the uniqueness of linear periods. Chen and Sun~\cite[Theorem~B]{chen2020uniqueness} subsequently proved a twisted multiplicity-one result for $\GL_{2n}(\F)$ with respect to $\GL_n(\F)\times\GL_n(\F)$ for all but finitely many characters. 
      
      The question of determining the existence of linear periods has also been studied extensively by several authors. More recently, Yang~\cite[Theorem~1.2]{yang2022linear} classified the unitary representations of $\GL_{2n}(\F)$ admitting linear periods with respect to $\GL_n(\F)\times\GL_n(\F)$, while Sharma~\cite[Theorem~1]{hariom} classified the irreducible smooth representations of $\GL_4(\F)$ admitting linear periods with respect to $\GL_2(\F)\times\GL_2(\F)$. We refer the interested reader to \cite{FL,Hak,HM,Mat1,Mat5,Mat2,MatJNT,Mat3 } and references therein for a detailed account of the results on linear periods.

      The study of linear periods for representations of \(\G_n\) has witnessed significant progress in recent years. The multiplicity-one property for linear periods of irreducible smooth representations of \(\G_n\) was proved by Anandavardhanan \emph{et al.}~\cite[Theorem~2.5]{anandavardhanan2024sign}. In recent work, the authors and Verma \cite{DSV2026} continued the study of linear periods for \(\G_n\) with respect to \(\Ha_{1,n-1}\), obtaining a classification of representations admitting such periods for \(n=3,4\) and proposing a conjectural description for general \(n\). According to this conjecture, such representations are precisely the trivial representation and representations parabolically induced from the trivial representation of $\G_{n-2}$ and an infinite-dimensional $\Ha_{1,1}$-distinguished representation of $\G_2$. This description suggests that a similar pattern might persist for generalized linear periods. Surprisingly, the generalized setting reveals a richer structure, with additional families of representations occurring beyond those predicted by the linear periods case.
      In this direction, Lu~\cite{Lu26} recently investigated generalized linear periods for the pair \((\Ha_{p,q},\chi)\), with \(p+q=n\) and \(\chi\) a character of \(\Ha_{p,q}\). Motivated by these developments and the broader perspective provided by the twisted GGP conjecture~\cite{GGP23}, we propose a conjectural classification of irreducible smooth representations of $\G_n$ for $n>2$ admitting generalized linear periods with respect to $(\Ha_{1,n-1},\chi_s)$.

\begin{conj}\label{conj}
      Let $n>2$. The irreducible smooth $\G_n$-representations admitting a generalized linear period with respect to $(\Ha_{1,n-1},\chi_s)$ are precisely the following:
\begin{itemize}
     \item[\upshape(1)] $\nu^{-2s}\times\tau$, where $\tau\in\Irr(\G_2)$ is infinite-dimensional and $\Ha_{1,1}$-distinguished.

     \item[\upshape(2)] $\nu_{\mathds{1}_1}^{s+\frac{n-1}{2}}\times\nu^{-2s-1}$ if $s\notin\{-\frac n2,0\}$, and its unique irreducible subrepresentation otherwise.

     \item[\upshape(3)] $\nu_{\mathds{1}_1}^{s-\frac{n-1}{2}}\times\nu^{-2s+1}$ if $s\notin\{0,\frac n2\}$, and its unique irreducible subrepresentation otherwise.
\end{itemize}
\end{conj}
\begin{remark}

\label{rem:exceptional-symmetry}
At $s=0$, the unique irreducible subrepresentations occurring in
parts~\textup{(2)} and~\textup{(3)} are both isomorphic to $\mathds{1}_n$. Hence, the families in the statement of the
conjecture are not pairwise disjoint. At the remaining exceptional
values, the unique irreducible subrepresentation in
part~\textup{(2)} for
$
s=-\frac n2
$
is isomorphic to
$
\nu^{-2s}\widetilde{\mathcal Q}_n,
$
whereas the unique irreducible subrepresentation in
$
s=\frac n2
$
is isomorphic to
$
\nu^{-2s}\mathcal Q_n
$; see \S\ref{Qn} for notation. 

\end{remark}

       To provide evidence for our conjecture, we verify it for $n=3$ and $n=4$ in Sections~\ref{G_3} and \ref{G_4}, respectively. The study of $\GL_{n-1}(\F)$-distinction is closely connected with the study of $\GL_{1}(\F)\times \GL_{n-1}(\F)$-distinction. When $s=0$, our conjectural classification is analogous to the results of Prasad~\cite[Theorem~1]{Pra93} and Venketasubramanian~\cite[Theorem~1.1]{Ven13},
who classified $\GL_{n-1}(\F)$-distinguished representations of
$\GL_n(\F)$ in the split case. Moreover, in the same setting, our
classification is closely related to the classification of modular
representations of $\GL_n(\F)$ distinguished by $\GL_{n-1}(\F)$ obtained
by S\'echerre and Venketasubramanian~\cite{Venketmod}.
       The above conjecture will also be useful for studying the classification of representations of $\G_n$ admitting generalized linear periods with respect to $(\Ha_{p,q},\chi_s)$ for $p\geq 2$.

       As another consequence of Conjecture~\ref{conj}, we obtain the following result concerning the existence of generalized linear periods for generic representations of $\G_n$ with respect to $(\Ha_{1,n-1},\chi_s)$. Here, by a generic representation of $\G_n$, we mean an irreducible smooth representation $\pi$ such that its Jacquet--Langlands transfer $\JL(\pi)$ is a generic representation of $\GL_{2n}(\F)$ (see \S\ref{JLT} for more details).

\begin{corollary}\label{cor}
     A generic irreducible smooth representation of $\G_n$, with $n>3$, does not admit a generalized linear period with respect to $(\Ha_{1,n-1},\chi_s)$.
\end{corollary}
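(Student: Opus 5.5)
Assuming Conjecture~\ref{conj}, the plan is to show that none of the representations in its list has a generic Jacquet--Langlands transfer when $n>3$. It then suffices to treat the three families one at a time and, for each, compute $\JL$ and check genericity. The main tools are two standard facts. First, $\JL$ is compatible with parabolic induction at the level of the Grothendieck group (\S\ref{JLT}). Second, an irreducible representation of $\GL_{2n}(\F)$ is generic if and only if it is a full irreducible induced representation of essentially square-integrable segments, none of which are linked (Zelevinsky / Bernstein--Zelevinsky).

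\textbf{Family (1).} Here $\pi=\nu^{-2s}\times\tau$ is a subquotient of a representation induced from the character $\nu^{-2s}$ of $\G_{n-2}$. I would compute $\JL$ of that character. Since $\nu$ on $\G_{n-2}$ is the twist of the trivial character, $\JL(\nu^{-2s}\mathds{1}_{n-2})$ is, up to sign in the Grothendieck group, the representation attached to the $(n-2)$-by-$2$ ladder / Speh-type multisegment. More precisely, it is the Langlands quotient built from the segments $[\nu_F^{a},\ldots]$ of length $2$. Its Langlands parameter is $\mathfrak L(\nu^{-2s})$, of dimension $2n-4$, as in the abstract, and it is the sum of the $n-2$ two-dimensional pieces $\nu_F^{\,\cdot}\otimes \mathrm{Sp}_2$. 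This parameter is not the parameter of a generic representation as soon as $n-2\geq2$: the parameter has $n-2$ shifted copies of $\mathrm{Sp}_2$ whose associated segments are linked, and a representation of $\GL_{2n}(\F)$ is generic exactly when the standard module is irreducible. Hence $\mathfrak L(\pi)$ contains a sub-Weil--Deligne representation of non-generic type (the "monodromy is not maximal"). The key point is that genericity of $\JL(\pi)$ is detected by its $L$-parameter, since generic representations are determined as those whose parameter gives an irreducible standard module. For $n>3$, the segments coming from consecutive pieces $\nu^{a}\mathrm{Sp}_2$ and $\nu^{a+1}\mathrm{Sp}_2$ are linked, so $\JL(\pi)$ is not generic. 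For $n=3$ there is only one piece, which explains the hypothesis $n>3$.

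\textbf{Families (2) and (3).} Here $\pi$ is $\nu_{\mathds{1}_1}^{t}\times\nu^{-2s\mp1}$, or its unique subrepresentation, where the second factor is the character of $\G_{n-1}$. The same reasoning applies with $n-1\geq3$ pieces $\nu^{b}\mathrm{Sp}_2$ coming from the character of $\G_{n-1}$. These are mutually linked, so the standard module of $\JL(\pi)$ is reducible and $\JL(\pi)$ is non-generic. I would check that the extra factor $\nu_{\mathds{1}_1}^{t}$ of $\G_1$ does not change this: it contributes a single segment of length $2$, and the linked pair inside the character part persists in the multisegment of $\pi$.

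\textbf{Main obstacle.} The hard step is making rigorous the passage "$\pi$ is a subquotient of the induced representation $\Rightarrow$ its parameter contains these linked pieces $\Rightarrow$ non-generic". I would handle it via the Langlands classification on $\G_n$ and Tadić's description of $\JL$ on Langlands quotients. First, write $\pi$ as a Langlands quotient of a standard module whose data include the characters of $\G_{n-2}$ (resp.\ $\G_{n-1}$) expressed through their own Langlands data. Next, transfer these data to $\GL_{2n}(\F)$. Finally, observe that the resulting Langlands data contain two linked segments, which is incompatible with genericity.
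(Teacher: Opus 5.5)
Your overall route is the one the paper intends. The paper gives no separate proof of Corollary~\ref{cor}. However, assuming Conjecture~\ref{conj}, the forward half of the proof of Theorem~\ref{LP} shows that every listed $\pi$ satisfies $\mathfrak L(\pi)\supseteq\mathfrak L(\nu^{-2s})=\bigoplus_{j=0}^{n-3}\nu^{-2s+n-3-2j}\Sp_2$. For $n>3$, two consecutive summands $\nu^{c}\Sp_2$ and $\nu^{c-2}\Sp_2$ (the exponents differ by $2$, not $1$) give the juxtaposed, hence linked, segments $[c-\frac52,c-\frac32]$ and $[c-\frac12,c+\frac12]$ for $\GL_{2n}(\F)$. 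So $\JL(\pi)$ is not generic.

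Your plan goes wrong at the step you yourself flag. The principle ``$\pi$ is a constituent of the induced representation, so its Langlands data contain those of the character'' is not valid. Constituents have multisegments obtained by elementary operations (Lemma~\ref{l2}), and these can absorb pieces of the character. This is exactly what happens at the exceptional values $s\in\{0,\pm\frac n2\}$ of families~(2)--(3), where your claim that the $\G_1$-factor merely adds one more $\Sp_2$ is false. For example, at $s=\frac n2$ the relevant constituent of $\nu_{\mathds{1}_1}^{1/2}\times\nu^{1-n}$ is $\nu^{-n}\mathcal Q_n$. Its Langlands multisegment is $\{[-n+\frac32]_{(\mathds{1}_1)},\dots,[-\frac32]_{(\mathds{1}_1)},[-\frac12,\frac12]_{(\mathds{1}_1)}\}$. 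The piece $[-\frac12]_{(\mathds{1}_1)}$ of the $\G_{n-1}$-character has merged with $[\frac12]_{(\mathds{1}_1)}$, and $\mathfrak L(\pi)=\mathfrak L(\nu^{-2s})\oplus\Sp_4$.

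The repair is to identify these constituents explicitly as $\mathds{1}_n$, $\nu^{-2s}\mathcal Q_n$ and $\nu^{-2s}\widetilde{\mathcal Q}_n$ (Remarks~\ref{rem:exceptional-symmetry} and~\ref{LnZn}). In each of them $n-2\geq 2$ consecutive singletons survive, which is enough. For the irreducible members you only need that an irreducible product $\pi_1\times\pi_2$ equals $\mathcal Q(\mathfrak m_1+\mathfrak m_2)$.

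Finally, drop the ``compatibility of $\JL$ with induction in the Grothendieck group''. It fails for the $\JL$ of \S\ref{JLT}: for example, $\mathds{1}_1\times\nu$ is irreducible on $\G_2$, whereas $\JL(\mathds{1}_1)\times\JL(\nu)$, the Steinberg representation of $\GL_2(\F)$ times its twist by $|\det|_{\F}$, is reducible. It is also not needed; only additivity of Langlands data under irreducible induction is used.
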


      For $s=0$, the above corollary is known by Anandavardhanan \emph{et al.}~\cite[Theorem~2.5]{anandavardhanan2024sign}. Now, assuming Conjecture~\ref{conj}, we establish the following characterization of the representations $\pi$ admitting generalized linear periods with respect to $(\Ha_{1,n-1},\chi_s)$ in terms of their Langlands parameters $\mathfrak{L}(\pi)$, illustrating how the existence of a generalized linear period for $\pi$ is reflected in the arithmetic invariants attached to it. Here, by $\mathfrak{L}(\pi)$, we mean $\mathfrak{L}(\JL(\pi))$, where $\JL(\pi)\in\Irr(\GL_{2n}(\F))$ is the Jacquet--Langlands transfer of $\pi$, as described in Section~\ref{JLTLP}.

\begin{theorem}\label{LP}
      Assume that Conjecture~\ref{conj} holds, and let $n>2$. An irreducible smooth representation $\pi$ of $\G_n$ admits a generalized linear period with respect to $(\Ha_{1,n-1},\chi_s)$ if and only if its Langlands parameter $\mathfrak{L}(\pi)$ admits a Weil--Deligne subrepresentation
isomorphic to $\mathfrak{L}(\nu^{-2s})$, the $(2n-4)$-dimensional
Langlands parameter of the character $\nu^{-2s}$ of $\G_{n-2}$,
      such that the four-dimensional quotient
$\mathfrak L(\pi)/\mathfrak L(\nu^{-2s})$
is the Langlands parameter of either the trivial representation $\mathds{1}_2$ of $\G_2$, in
which case
$s=\pm\frac{n-2}{2};$
or an irreducible infinite-dimensional
$\Ha_{1,1}$-distinguished representation of $\G_2$.
\end{theorem}

     From the proof of the above theorem, we obtain an important observation concerning the symplectic Langlands parameters. Anandavardhanan \emph{et al.}~\cite[Theorem~1.1]{anandavardhanan2024sign} proved that if a generic irreducible smooth representation of $\G_n$ admits a linear period, then its Langlands parameter is symplectic. In contrast, we observe that a non-generic representation admitting a generalized linear period with respect to $(\Ha_{1,n-1},\chi_s)$ need not have a symplectic Langlands parameter. For example, consider $\pi=\nu_{\mathds{1}_1}^{\,s+\frac{n-1}{2}}\times\nu^{-2s-1}$ with $s\notin\left\{-\frac n2,0\right\}$. By Conjecture~\ref{conj}, $\pi$ admits a generalized linear period with respect to $(\Ha_{1,n-1},\chi_s)$. However, its Langlands parameter  
     $$\mathfrak{L}(\pi)=\nu^{2s+n-1}\operatorname{Sp}_2\oplus\nu^{-2s+n-3}\operatorname{Sp}_2\oplus\cdots\oplus\nu^{-2s-n+1}\operatorname{Sp}_2$$ 
     is not symplectic.

     Finally, we study the $L$-packet associated with an irreducible representation of $\G_n$ admitting a linear period with respect to $\Ha_{1,n-1}$. Let $\theta:\G_n\to\G_n$ denote the involution defined in \eqref{theta}, which determines the symmetric pair $(\G_n,\Ha_{1,n-1})$. We prove the invariance of the $L$-packet predicted by the Lapid--Prasad conjecture, as recalled in $\S$\ref{LPC}; see \cite[Conjecture 2]{Prasad2015} and \cite[Conjecture 1.1]{Kapon2025}.
     More precisely, we establish the following theorem.

\begin{theorem}\label{Lapid-Prasad}
Assume that Conjecture~\ref{conj} holds for $s=0$. Let $n>2$ and
let $\pi\in\Irr(\G_n)$ be $\Ha_{1,n-1}$-distinguished. Then the
$L$-packet of $\pi$ is invariant under the involution
\[
\rho\longmapsto\widetilde{\rho}^{\,\theta},
\]
where $\widetilde{\rho}$ denotes the contragredient representation and $\widetilde{\rho}^{\,\theta}(g)=\widetilde{\rho}(\theta(g))$.
\end{theorem}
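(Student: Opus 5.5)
Under Conjecture~\ref{conj} at $s=0$, the $\Ha_{1,n-1}$-distinguished irreducible representations of $\G_n$ are exactly $\mathds{1}_n$ (the common unique subrepresentation in parts (2) and (3), cf.\ Remark~\ref{rem:exceptional-symmetry}) and the induced representations $\pi=\mathds{1}_{n-2}\times\tau$ with $\tau\in\Irr(\G_2)$ infinite-dimensional and $\Ha_{1,1}$-distinguished. The plan is to treat these two families separately. In each case we show that $\widetilde{\pi}^{\,\theta}$, or the $L$-packet of it, has the same Langlands parameter $\mathfrak L(\pi)=\mathfrak L(\JL(\pi))$ as $\pi$. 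The $L$-packet of $\pi$ is the set of $\rho\in\Irr(\G_n)$ with $\mathfrak L(\rho)=\mathfrak L(\pi)$. Since $\rho\mapsto\widetilde\rho^{\,\theta}$ is a bijection of $\Irr(\G_n)$, it therefore suffices to prove
\[
\mathfrak L(\widetilde\rho^{\,\theta})=\mathfrak L(\rho)\quad\text{for all }\rho\text{ with }\mathfrak L(\rho)=\mathfrak L(\pi).
\]

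The first step is to understand the effect of $\theta$. The involution $\theta$ of \eqref{theta} is conjugation by an element of $\G_n$, namely $\diag(1,-1,\dots,-1)$ or a similar element. Hence $\rho^\theta\cong\rho$ for every $\rho$, and the involution reduces to $\rho\mapsto\widetilde\rho$. On Langlands parameters, contragredient corresponds to the dual: $\mathfrak L(\widetilde\rho)=\mathfrak L(\rho)^\vee$. This holds because $\JL$ commutes with contragredients and the local Langlands correspondence for $\GL_{2n}(\F)$ sends $\widetilde{\sigma}$ to $\mathfrak L(\sigma)^\vee$. It therefore suffices to prove that $\mathfrak L(\pi)$ is self-dual for each distinguished $\pi$ in the list.

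Next we compute the parameters.
\begin{itemize}
\item For $\pi=\mathds{1}_n$, the parameter $\mathfrak L(\mathds{1}_n)$ is a sum of twisted $\operatorname{Sp}_2$'s with exponents placed symmetrically about $0$, as in the formula displayed after Theorem~\ref{LP} with $s=0$. This is visibly self-dual, since $(\nu^a\operatorname{Sp}_2)^\vee=\nu^{-a}\operatorname{Sp}_2$.
\item For $\pi=\mathds{1}_{n-2}\times\tau$, compatibility of $\JL$ with parabolic induction at the level of parameters gives $\mathfrak L(\pi)=\mathfrak L(\mathds{1}_{n-2})\oplus\mathfrak L(\tau)$. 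This is the decomposition used in Theorem~\ref{LP} with $s=0$. The first summand is self-dual by the previous item.
\end{itemize}
For $\mathfrak L(\tau)$ we use the known classification for $\G_2$: an infinite-dimensional $\Ha_{1,1}$-distinguished $\tau$ has symplectic parameter, by \cite[Theorem~1.1]{anandavardhanan2024sign} in the generic case together with the explicit classification of non-generic distinguished $\tau$ recalled earlier in the paper. In particular $\mathfrak L(\tau)$ is self-dual.

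Finally, since the $L$-packet is defined by the parameter, self-duality of $\mathfrak L(\pi)$ gives $\mathfrak L(\widetilde\rho)=\mathfrak L(\rho)^\vee=\mathfrak L(\pi)^\vee=\mathfrak L(\pi)$ for all $\rho$ in the packet, which proves invariance. The main obstacle is the $\G_2$ input, namely that every infinite-dimensional $\Ha_{1,1}$-distinguished representation of $\G_2$ has a self-dual parameter, including the non-generic ones such as Speh-type or induced ones like $\nu^{a}\chi\times\nu^{-a}\chi^{-1}$-shaped representations. I would first try to read self-duality directly off the $n=2$ classification, case by case. If that classification does not give it immediately, the fallback is to use $\widetilde\tau\cong\tau^\theta\cong\tau$, which holds for distinguished $\tau$ by the known multiplicity-one/Gelfand-pair theory for $(\G_2,\Ha_{1,1})$.
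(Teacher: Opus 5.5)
Your proof is correct. It works with Langlands parameters, whereas the paper works directly with representations.

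\textbf{The paper's argument.} It uses that $L$-packets of $\G_n$ are singletons, so it only needs $\widetilde{\pi}^{\,\theta}\simeq\pi$. After the same observation you make, that $\pi^{\theta}\simeq\pi$ because $\theta=\Ad(\delta_{1,n-1})$ is inner, it checks $\widetilde{\pi}\simeq\pi$ directly. This is trivial for $\mathds{1}_n$. For $\pi=\mathds{1}_{n-2}\times\tau$ it follows from $\widetilde{\tau}\simeq\tau$ (self-duality of $\Ha_{1,1}$-distinguished representations, \cite[Theorem~3.2]{raghu}) together with compatibility of contragredients with parabolic induction.

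\textbf{Your argument.} Your route needs extra inputs:
\begin{itemize}
\item that $\JL$ and the local Langlands correspondence for $\GL_{2n}(\F)$ commute with contragredients;
\item additivity of $\mathfrak L$ on the irreducible product $\mathds{1}_{n-2}\times\tau$;
\item self-duality of $\mathfrak L(\tau)$.
\end{itemize}
Your primary way of getting the last point does work. It combines symplecticity from \cite[Theorem~1.1]{anandavardhanan2024sign} for generic $\tau$ with a case check for the non-generic ones, such as $\nu^{1/2}\times\nu^{-1/2}$ or $\Sp_2(\sigma)$, whose parameters have the shape $W\oplus W^\vee$. It is heavier than necessary, though. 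Your fallback $\widetilde{\tau}\simeq\tau$ is exactly the paper's key input, and once you have it the detour through parameters can be dropped, since it already gives $\widetilde{\pi}\simeq\pi$.

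\textbf{Comparison.} Your formulation isolates the mechanism in the form the Lapid--Prasad conjecture is naturally read: a self-dual parameter forces the whole packet to be stable. That would still mean something for groups whose packets are not singletons. The paper's argument is shorter and needs no $\JL$/LLC compatibility statements.
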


        Next, we outline the strategy for verifying Conjecture~\ref{conj} for $n\leq4$ and establishing Theorems~\ref{LP} and \ref{Lapid-Prasad}. It follows from Theorem~\ref{anand} that, for $n>2$, an irreducible supercuspidal representation of $\G_n$ does not admit generalized linear periods with respect to $(\Ha_{1,n-1},\chi_s)$. Therefore, it suffices to consider irreducible non-supercuspidal representations. Such representations arise as irreducible quotients of parabolically induced representations of the form $\pi_1\times\pi_2$, where $\pi_1\in\Irr(\G_k)$ and $\pi_2\in\Irr(\G_{n-k})$ for some $1\leq k\leq n-1$. Using Mackey theory for the restriction of parabolically induced representations, we first obtain a comprehensive list of representations that may admit generalized linear periods with respect to $(\Ha_{1,n-1},\chi_s)$. These representations need not be irreducible. We determine their Jordan--Hölder constituents using the Zelevinsky and Langlands classifications. Although permuting the inducing data preserves the set of Jordan--Hölder constituents, it may interchange the roles of subrepresentations and quotients; we therefore retain the order whenever this distinction is needed. Finally, we determine which of these irreducible constituents admit a generalized linear period with respect to $(\Ha_{1,n-1},\chi_s)$, thereby proving Conjecture~\ref{conj} for $n=3$ and $n=4$. To prove Theorem~\ref{LP}, we first compute the Jacquet--Langlands transfers $\JL(\pi)\in\Irr(\GL_{2n}(\F))$ of the irreducible representations $\pi\in\Irr(\G_n)$ occurring in Conjecture~\ref{conj}. We then determine the corresponding Langlands parameters $\mathfrak{L}(\JL(\pi))$, which are $2n$-dimensional representations of the Weil--Deligne group $W_{\F}'$. The resulting description of these Langlands parameters leads to the characterization stated in Theorem~\ref{LP}. Finally, to prove Theorem \ref{Lapid-Prasad}, we use the classification of irreducible representations admitting a linear period with respect to $\Ha_{1,n-1}$. 
        Under the local Langlands correspondence for the fixed inner form $\G_n$, every $L$-packet is a singleton. Thus, in the present setting, invariance of the $L$-packet is equivalent to $\pi\simeq\widetilde{\pi}^{\,\theta}$.

           This article is organized as follows. In Section~\ref{pre}, we fix the notation and recall basic facts on representations of $\G_n$, together with known results on generalized linear periods with respect to $(\Ha_{1,n-1},\chi_s)$. We also derive necessary conditions for the existence of generalized linear periods with respect to $(\Ha_{1,n-1},\chi_s)$ using the geometric lemma. In Section~\ref{G_3}, we prove the conjecture for $n=3$, while Section~\ref{G_4} is devoted to the proof for $n=4$. In Section~\ref{JLTLP}, we recall the Jacquet--Langlands transfer and the description of Langlands parameters, which we use to prove Theorem~\ref{LP}. We conclude Section~\ref{JLTLP} by proving the Theorem~\ref{Lapid-Prasad}.

\section{Notation and Preliminaries}\label{pre}

     \noindent Let \(\F\) be a non-Archimedean local field of characteristic zero, and let \(\D\) be a quaternion division algebra over \(\F\) with \(\dim_{\F} \D=4\). For \(n\in \mathbb{N}\), set \(\G_n=\GL_n(\D)\). Throughout, $\G_0$ denotes the trivial group. We denote by \(\Nrd_{\D/\F}:\G_n\to \F^\times\) the reduced norm map, and define \(\nu_n(g)=|\Nrd_{\D/\F}(g)|_{\F},\ g\in \G_n\). When the index is clear from the context, we simply write \(\nu\) instead of \(\nu_n\). Let $p$ and $q$ be non-negative integers with \(p+q=n\), and put \(\delta_{p,q}=\diag(I_p,-I_q)\). Consider the inner automorphism of $\G_n$ given by 
     \begin{equation}\label{theta}
\theta = \Ad(\delta_{p,q}) : g \longmapsto \delta_{p,q}\,g\,\delta_{p,q}^{-1}.
\end{equation}
     Its fixed-point subgroup is $\Ha_{p,q}=\left\{\diag(g_1,g_2):\;g_1\in \G_p,\;g_2\in \G_{q}\right\}.$ For \(s\in \mathbb{R}\), let $\alpha_s:\F^\times \rightarrow\mathbb{C}^\times$ denote the character defined by $\alpha_s(x)=|x|_{\F}^{\,2s},~x\in\F^\times$. Define a character $\chi_s$ of $\Ha_{p,q}$ by
     $$\chi_s(\diag(g_1,g_2))=\alpha_s\left(\Nrd_{\D/\F}(g_1)\Nrd_{\D/\F}(g_2)^{-1}\right)=\nu(g_1)^{2s}\nu(g_2)^{-2s}.$$

         Let $\Rep(\G_n)$ denote the category of smooth complex representations of $\G_n$ of finite length, $\Irr(\G_n)$ the set of equivalence classes of irreducible representations, and $\mathcal{C}(\G_n)$ the subset of irreducible supercuspidal representations. For $\pi\in\Rep(\G_n)$, let $\widetilde{\pi}$ denote its contragredient. Unless stated otherwise, all representations are smooth. 
         For a composition $\alpha=(n_1,\ldots,n_r)$ of $n$, let $\Par_\alpha$ denote the standard parabolic subgroup of $\G_n$ with Levi subgroup $\M_\alpha\simeq\G_{n_1}\times\cdots\times\G_{n_r}$. Given $\rho_i\in\Rep(\G_{n_i})$, we write $$\rho_1\times\cdots\times\rho_r=\Ind_{\Par_\alpha}^{\G_n}(\rho_1\otimes\cdots\otimes\rho_r)$$ for normalized parabolic induction. 
         For $\Psi=\rho_1\times\cdots\times\rho_r,
$
we set
$
\Psi^\vee
=
\widetilde{\rho_r}\times\cdots\times\widetilde{\rho_1}.
$
If $\M=\G_l\times\G_{n-l}$ is a maximal Levi subgroup of $\G_n$, we denote the corresponding normalized Jacquet module of $\pi$ by $r_{(l,n-l)}(\pi)$.

\subsection{Zelevinsky and Langlands classification}\label{ZLC}
        The results presented in this subsection follow \cite{tadic1990induced,ming}. Let $k$ be a positive integer and let $\rho\in\mathcal{C}(\G_k)$. There exists a unique positive integer $l_\rho$ such that the induced representation $\rho\times\nu^l\rho$ is reducible if and only if $l\in\{\pm l_\rho\}$. In particular, if $\rho\in\mathcal{C}(\G_1)$, then $l_\rho=2$ when $\dim(\rho)=1$ and $l_\rho=1$ otherwise. Moreover, if $\rho\in\mathcal{C}(\G_2)$, then $l_\rho=1$.
        Let us denote $\nu_{\rho}=\nu^{l_{\rho}}$.
        For integers $a\le b$, the finite ordered sequence of the form $$\Delta:=[a,b]_{(\rho)}=(\nu^{a}_{\rho}\rho,\nu^{a+1}_{\rho}\rho,\ldots,\nu^{b}_{\rho}\rho)$$ is called a segment associated with $\rho$. The length of segment $[a,b]_{(\rho)}$ is $b-a+1$. The contragredient of $\Delta$ is defined by $\widetilde{\Delta} := [-b, -a]_{(\widetilde{\rho})}$. Let $\Delta=[a,b]_{(\rho)}$ and $\Delta'=[a',b']_{(\rho')}$ be two segments. We say that $\Delta$ and $\Delta'$ are \emph{linked} if $\Delta\cup\Delta'$ is a segment, but neither $\Delta\subseteq\Delta'$ nor $\Delta'\subseteq\Delta$. If $\Delta$ and $\Delta'$ are linked and $\nu^{a'}_{\rho'}\rho'=\nu^{(a+j)}_{\rho}\rho$ for some $j>0$, then $\Delta$ is said to \emph{precede} $\Delta'$.

        Segments provide a systematic way to construct new representations of $\G_n$, forming the basis of the Zelevinsky and Langlands classifications. To each segment $\Delta = [a,b]_{(\rho)}$, we associate an irreducible representation $\mathcal{S}(\Delta)$ (resp. $\mathcal{Q}(\Delta)$), defined as the unique irreducible subrepresentation (resp. quotient) of $\nu_\rho^{a}\rho \times \nu_\rho^{a+1}\rho \times \cdots \times \nu_\rho^{b}\rho$. 
        For instance, if $\Delta=\big[-\frac{n-1}{2},\frac{n-1}{2}\big]_{(\mathds{1}_1)}$, we have $\mathcal{S}(\Delta)=\mathds{1}_n$, the trivial representation of $\G_n$, and $\mathcal{Q}(\Delta)=\St_n$, the Steinberg representation of $\G_n$. Similarly, if  $\Delta = \big[-\frac{n-1}{2}, \frac{n-1}{2}\big]_{(\sigma)}$ with $\sigma \in \Irr(\G_1)$ and $\dim(\sigma)>1$, then we obtain $\mathcal{S}(\Delta)=\Sp_n(\sigma)$, the Speh representation of $\G_n$, and $\mathcal{Q}(\Delta)=\St_n(\sigma)$, the generalized Steinberg representation of $\G_n$.

        Let $\Seg$ denote the set of all segments in $\mathcal{C}(\G)$, and let $\mathcal{M}(\Seg)$ be the set of all finite multiset of segments.
        Let $\mathfrak a=\{\Delta_1,\ldots,\Delta_r\}$ be a
multisegment. Choose an ordering of its segments such that
$\Delta_i$ does not precede $\Delta_j$ whenever $i<j$, and set
$$
\lambda(\mathfrak a)
=
\mathcal S(\Delta_1)\times\cdots\times\mathcal S(\Delta_r).
$$
Then $\lambda(\mathfrak a)$ has a unique irreducible
subrepresentation, denoted by $\mathcal S(\mathfrak a)$. Its
isomorphism class is independent of the chosen admissible ordering.
Similarly, set
$$
\pi(\mathfrak a)
=
\mathcal Q(\Delta_1)\times\cdots\times\mathcal Q(\Delta_r)
$$
using an admissible Langlands ordering. Its unique irreducible
quotient is denoted by $\mathcal Q(\mathfrak a)$.
        
        An elementary operation on $\mathfrak{a}\in \mathcal{M}(\Seg)$ consists of replacing a pair of linked segments $\{\Delta_1,\Delta_2\}$ in $\mathfrak{a}$ with the pair $\{\Delta_1\cup\Delta_2,\;\Delta_1\cap\Delta_2\}$, where the empty segment is omitted if
$\Delta_1\cap\Delta_2=\varnothing$. We define a partial order on $\mathcal{M}(\Seg)$ by declaring $\mathfrak{b} \le \mathfrak{a}$ if $\mathfrak{b}$ can be obtained from $\mathfrak{a}$ through a finite sequence of elementary operations. Now, we recall some results from \cite{ming} that are key tools for the explicit analysis of Jordan--Hölder constituents of representations of the form $\lambda(\mathfrak{a})$ of $\G_n$.

\begin{lemma}[{\cite[Lemma~5.12]{ming}, \cite[Proposition~4.3]{tadic1990induced}}]\label{l1}
        Let $\Delta_1$ and $\Delta_2$ be two segments. The representation $\pi=\mathcal{S}(\Delta_1)\times \mathcal{S}(\Delta_2)$ (respectively, $\pi=\mathcal{Q}(\Delta_1)\times \mathcal{Q}(\Delta_2)$) is irreducible if and only if $\Delta_1$ and $\Delta_2$ are not linked. If $\Delta_1$ and $\Delta_2$ are linked, let $\Delta_3=\Delta_1\cup\Delta_2$ and $\Delta_4=\Delta_1\cap\Delta_2$. Then $\pi$ has length $2$. If $\Delta_2\prec\Delta_1$, then $\pi$ has a unique irreducible subrepresentation $\mathcal{S}(\Delta_1,\Delta_2)$ (resp., $\mathcal{Q}(\Delta_3)\times \mathcal{Q}(\Delta_4)$) and a unique irreducible quotient $\mathcal{S}(\Delta_3)\times \mathcal{S}(\Delta_4)$ (resp., $\mathcal{Q}(\Delta_1,\Delta_2)$). If $\Delta_1\prec\Delta_2$, then the subrepresentation and quotient are interchanged.
\end{lemma}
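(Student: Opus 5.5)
This lemma is quoted from \cite[Lemma~5.12]{ming} and \cite[Proposition~4.3]{tadic1990induced}. If I were to reprove it, I would work in the Grothendieck group using Jacquet modules and the geometric lemma, combined with Frobenius reciprocity. The plan has four steps.

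\textbf{Step 1 (irreducibility for unlinked segments).} Suppose $\Delta_1$ and $\Delta_2$ are not linked. I would compute the minimal Jacquet module of $\mathcal S(\Delta_1)\times\mathcal S(\Delta_2)$ by the geometric lemma. It is a sum of shuffles of the two cuspidal strings, because $r_{\min}(\mathcal S(\Delta))$ is the single ordered string $\nu_\rho^{a}\rho\otimes\cdots\otimes\nu_\rho^{b}\rho$. Any irreducible subquotient $\sigma$ embeds into some product of cuspidals in an order compatible with these shuffles. I would then check that, when the segments are unlinked (disjoint and non-juxtaposed, or nested), each shuffle forces $\sigma$ to contain the whole Jacquet module. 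This gives irreducibility. Equivalently, I would reduce to the multiplicity-one statement that $\mathcal S(\Delta_1)\otimes\mathcal S(\Delta_2)$ occurs exactly once in the Jacquet module of the product. Combining this with the fact that $\mathcal S(\Delta_1)\times\mathcal S(\Delta_2)$ and $\mathcal S(\Delta_2)\times\mathcal S(\Delta_1)$ have isomorphic contragredient-type dual structures gives the result via the standard intertwining argument.

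\textbf{Step 2 (length two for linked segments).} Suppose $\Delta_2\prec\Delta_1$. I would exhibit two distinct irreducible subquotients: $\mathcal S(\Delta_1,\Delta_2)$, which is the unique irreducible subrepresentation by the definition of $\lambda(\mathfrak a)$ with this admissible order, and $\mathcal S(\Delta_3)\times\mathcal S(\Delta_4)$. The latter is irreducible by Step 1, since $\Delta_3\supseteq\Delta_4$ are unlinked. It also appears as a quotient: by Frobenius reciprocity and the transitivity $\mathcal S(\Delta_1)\hookrightarrow \mathcal S(\Delta_4')\times\mathcal S(\Delta_4)$-type embeddings, one builds a nonzero map $\mathcal S(\Delta_1)\times\mathcal S(\Delta_2)\to\mathcal S(\Delta_3)\times\mathcal S(\Delta_4)$ by splitting $\Delta_1$ and $\Delta_2$ at the intersection and reassembling. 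Counting terms of the Jacquet module then shows that these two constituents exhaust the shuffle terms. So the length is exactly $2$, and since they are distinct the socle and cosocle are as claimed.

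\textbf{Step 3 (the case $\Delta_1\prec\Delta_2$).} Here I would apply the contragredient or the Zelevinsky-type involution. Alternatively, I would use that reversing the order of the factors interchanges the socle and the cosocle, because the two products share Jordan--Hölder constituents and the intertwining operator between them is neither injective nor zero.

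\textbf{Step 4 (the $\mathcal Q$ version).} I would deduce it from the $\mathcal S$ version by the Aubert--Zelevinsky involution, which exchanges $\mathcal S(\Delta)$ and $\mathcal Q(\Delta)$, sends products to reversed products, and is exact up to sign in the Grothendieck group.

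The main obstacle is the multiplicity and irreducibility count in Step 1 for $\G_n$. Unlike the split case, the reducibility exponent $l_\rho$ can be $2$, so one must work consistently with $\nu_\rho$ rather than $\nu$. One must also use the theorem, due to Mínguez--Sécherre and Tadić, that $\rho\times\nu^l\rho$ reduces only at $l=\pm l_\rho$. With that input, the argument is the same as Zelevinsky's.
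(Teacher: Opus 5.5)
The paper does not prove this lemma; it imports it from \cite{ming} and \cite{tadic1990induced}, exactly as your first sentence says. As a self-contained reproof, however, your sketch asserts its key steps rather than proving them, and two of its claims are wrong as stated.

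In Step~1 the multiplicity-one claim fails for one of the two orders when the segments are nested with a common endpoint. Take $\rho\in\mathcal C(\G_k)$, $\Delta_1=[0,1]_{(\rho)}$ and $\Delta_2=[1]_{(\rho)}$, which are not linked. The geometric lemma gives, in the Grothendieck group,
\[
r_{(2k,k)}\bigl(\mathcal S(\Delta_1)\times\mathcal S(\Delta_2)\bigr)=\mathcal S(\Delta_1)\otimes\nu_\rho\rho+(\rho\times\nu_\rho\rho)\otimes\nu_\rho\rho .
\]
Since $\mathcal S(\Delta_1)$ is a constituent of $\rho\times\nu_\rho\rho$, the term $\mathcal S(\Delta_1)\otimes\mathcal S(\Delta_2)$ occurs twice.

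More seriously, even in an order where multiplicity one holds, it only gives a unique irreducible subrepresentation $\sigma$ occurring once. The duality you invoke only says that the irreducible quotients of $\mathcal S(\Delta_1)\times\mathcal S(\Delta_2)$ are the irreducible subrepresentations of $\mathcal S(\Delta_2)\times\mathcal S(\Delta_1)$. Irreducibility is then \emph{equivalent} to $\sigma$ embedding in $\mathcal S(\Delta_2)\times\mathcal S(\Delta_1)$. Proving this, above all for nested segments, is the actual content of the unlinked case, and ``each shuffle forces $\sigma$ to contain the whole Jacquet module'' just restates it.

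Step~2 has the same defect. You know $r_{\min}(\mathcal S(\Delta_3)\times\mathcal S(\Delta_4))$ but not $r_{\min}(\mathcal S(\Delta_1,\Delta_2))$, so counting shuffles cannot show that what remains is a single irreducible. What must be shown is that every constituent other than $\mathcal S(\Delta_3)\times\mathcal S(\Delta_4)$ contains $\mathcal S(\Delta_1)\otimes\mathcal S(\Delta_2)$ in its Jacquet module; this term does have multiplicity one when $\Delta_2\prec\Delta_1$. Likewise, you assert but do not prove that your ``split and reassemble'' composite of an embedding and a projection is nonzero.

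Step~4 fails as formulated. An involution of the Grothendieck group sending irreducibles to $\pm$ irreducibles and $\mathcal S(\Delta)$ to $\pm\mathcal Q(\Delta)$ does transport the irreducibility criterion, the length, and the constituent $\mathcal Q(\Delta_3)\times\mathcal Q(\Delta_4)$. It cannot distinguish subrepresentations from quotients, and ``sends products to reversed products'' is vacuous there because the Grothendieck ring is commutative.

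The repair is easy. For $\Delta_2\prec\Delta_1$, the Langlands classification says $\mathcal Q(\Delta_1,\Delta_2)$ is the unique irreducible quotient of $\mathcal Q(\Delta_1)\times\mathcal Q(\Delta_2)$, and it differs from $\mathcal Q(\Delta_3)\times\mathcal Q(\Delta_4)=\mathcal Q(\Delta_3,\Delta_4)$. Once the length is two, the kernel of the projection is therefore $\mathcal Q(\Delta_3)\times\mathcal Q(\Delta_4)$. It is the unique irreducible subrepresentation, since otherwise the sequence would split and give a second irreducible quotient.

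Step~3 is fine via the contragredient, which reverses precedence ($\Delta_1\prec\Delta_2$ iff $\widetilde{\Delta}_2\prec\widetilde{\Delta}_1$) and exchanges sub and quotient. Your intertwining-operator alternative would additionally need the operator to be nonzero and the resulting extension to be non-split.
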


\begin{lemma}[{\cite[Corollary~5.15]{ming}, \cite[Proposition~4.4]{tadic1990induced}}]\label{l2}
      For $\mathfrak{a},\mathfrak{b}\in\mathcal{M}(\Seg)$, the representation $\mathcal{S}(\mathfrak{b})$ (resp., $\mathcal{Q}(\mathfrak{b})$) occurs as a Jordan--Hölder constituent of $\lambda(\mathfrak{a})$ (resp., $\pi(\mathfrak{a})$) if and only if $\mathfrak{b}\leq\mathfrak{a}$.
\end{lemma}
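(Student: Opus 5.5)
The statement immediately above is Lemma~\ref{l2}. It is quoted from \cite[Corollary~5.15]{ming} and \cite[Proposition~4.4]{tadic1990induced}, so we cite it rather than reprove it. If we had to prove it, the plan would follow the standard route for the Zelevinsky classification, transported to the inner form $\G_n$. We would treat the $\lambda$ case and then deduce the $\pi$ case by applying the Zelevinsky--Aubert involution. That involution exchanges $\mathcal S(\Delta)$ and $\mathcal Q(\Delta)$, sends $\lambda(\mathfrak a)$ to $\pi(\mathfrak a)$ in the Grothendieck group, and preserves the partial order.

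\emph{Sufficiency.} Suppose $\mathfrak b\le\mathfrak a$. It suffices to treat a single elementary operation and then argue by transitivity. Transitivity holds because, if $\mathcal S(\mathfrak c)$ occurs in $\lambda(\mathfrak b)$ and $\mathfrak b$ is obtained from $\mathfrak a$ by one operation, then $\lambda(\mathfrak b)$ itself occurs inside $\lambda(\mathfrak a)$. To see this, take the two linked segments $\Delta_1,\Delta_2$ of $\mathfrak a$ involved in the operation. By Lemma~\ref{l1}, $\mathcal S(\Delta_1)\times\mathcal S(\Delta_2)$ contains $\mathcal S(\Delta_1\cup\Delta_2)\times\mathcal S(\Delta_1\cap\Delta_2)$ as a constituent. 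Inducing with the remaining segments and using exactness of parabolic induction and commutativity in the Grothendieck ring, we get that every constituent of $\lambda(\mathfrak b)$ is a constituent of $\lambda(\mathfrak a)$. In particular $\mathcal S(\mathfrak b)$ is one.

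\emph{Necessity.} This is the harder direction. We would use Jacquet modules and a \emph{highest derivative} or \emph{minimal exponent} argument. Fix the cuspidal support, and order the cuspidal representations on each line $\{\nu_\rho^k\rho\}$. We would prove by induction on $n$ that every constituent $\mathcal S(\mathfrak c)$ of $\lambda(\mathfrak a)$ satisfies $\mathfrak c\le\mathfrak a$. Take the segments of $\mathfrak c$ with the largest endpoint $e$, and compute the Jacquet module $r_{(n-m,m)}$ that strips off the ends $\nu_\rho^{e}\rho$. This computation uses the geometric lemma, in the form of Tadić's formula for $r(\mathcal S(\Delta))$. We compare the multiplicity of the corresponding term in $\lambda(\mathfrak a)$ and in $\lambda(\mathfrak c)$, and the comparison forces $\mathfrak a$ to have at least as many segments ending at $e$ as $\mathfrak c$. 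Removing those ends and applying induction then produces the chain of elementary operations.

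\emph{Main obstacle.} The delicate point is the combinatorial bookkeeping in the necessity step: showing that the truncated multisegments still satisfy the inequality $\le$. Over $\G_n$ there is an additional requirement, namely that the reducibility exponents $l_\rho$ are compatible with these Jacquet module computations. This is exactly what \cite{tadic1990induced,ming} verify, using the Jacquet--Langlands correspondence and Badulescu's work to show that the combinatorics match those of $\GL_n$. We would invoke that input rather than redo it.
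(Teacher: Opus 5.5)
Citing \cite[Corollary~5.15]{ming} and \cite[Proposition~4.4]{tadic1990induced} is exactly what the paper does; it gives no proof of Lemma~\ref{l2}. Your sufficiency sketch is also sound. By Lemma~\ref{l1}, one elementary operation replaces $[\lambda(\mathfrak a)]$ by the smaller effective class $[\lambda(\mathfrak b)]$ in the commutative Grothendieck ring, and transitivity finishes.

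Your necessity sketch, however, would not work as described.

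\textbf{The count at $e$ is empty.} Since $\mathcal S(\mathfrak c)$ and $\lambda(\mathfrak a)$ have the same cuspidal support, every segment of $\mathfrak a$ or $\mathfrak c$ that contains the highest point $\nu_\rho^{e}\rho$ of that support on a cuspidal line must end there. So the number of segments ending at $e$ is just the multiplicity of $\nu_\rho^{e}\rho$ in the support, and your comparison carries no information.

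\textbf{Stripping the top ends does not reflect the order.} Take $\mathfrak a=\{[0,1]_{(\rho)}\}$ and $\mathfrak c=\{[0]_{(\rho)},[1]_{(\rho)}\}$. Both truncations equal $\{[0]_{(\rho)}\}$, yet $\mathfrak c\not\le\mathfrak a$. Indeed $\mathcal S(\mathfrak c)\simeq\mathcal Q([0,1]_{(\rho)})$ is not a constituent of the irreducible $\lambda(\mathfrak a)=\mathcal S([0,1]_{(\rho)})$. So the inference from $\mathfrak c^-\le\mathfrak a^-$ to $\mathfrak c\le\mathfrak a$ is false, and the induction cannot close.

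\textbf{The premise fails too.} The stripped Jacquet term of $\mathcal S(\mathfrak c)$ need not be $\mathcal S(\mathfrak c^-)$. In the example, the Jacquet module of $\mathcal S(\mathfrak c)$ is $\nu_\rho\rho\otimes\rho$, with nothing having $\nu_\rho\rho$ on the right, so your functor kills $\mathcal S(\mathfrak c)$. Also, comparing with $\lambda(\mathfrak c)$ rather than $\mathcal S(\mathfrak c)$ is beside the point, since only $\mathcal S(\mathfrak c)$ is assumed to occur in $\lambda(\mathfrak a)$.

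\textbf{What is needed instead.} You need a Jacquet term that sees whole segments. Write $\mathfrak c$ in an admissible order $\Delta'_1,\dots,\Delta'_k$; Frobenius reciprocity then gives $r(\mathcal S(\mathfrak c))\twoheadrightarrow\mathcal S(\Delta'_1)\otimes\cdots\otimes\mathcal S(\Delta'_k)$. The Jacquet modules of $\mathcal S([a,b]_{(\rho)})$ are the terms $\mathcal S([a,i]_{(\rho)})\otimes\mathcal S([i+1,b]_{(\rho)})$. By the geometric lemma, this term occurs in $r(\lambda(\mathfrak a))$ only if the segments of $\mathfrak a$ can be cut into consecutive pieces, lowest piece first, that regroup column by column into $\Delta'_1,\dots,\Delta'_k$. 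A genuinely combinatorial argument, which is the core of Zelevinsky's proof for $\GL_n(\F)$, then shows that with an admissible order this forces $\mathfrak c\le\mathfrak a$.

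\textbf{The involution step.} Write $\iota$ for the Zelevinsky--Aubert involution. This step is fine once ``preserves the partial order'' is replaced by what is actually used, namely $\iota(\mathcal S(\mathfrak b))\simeq\mathcal Q(\mathfrak b)$ for every $\mathfrak b$. That identity follows from the $\lambda$-statement by induction on $\le$: $\mathcal Q(\mathfrak b)$ is a constituent of $\pi(\mathfrak b)$, and the $\mathcal Q(\mathfrak b')$ are pairwise distinct.
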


\subsection{The representations $\mathcal{Q}_n$ and $\mathcal{S}_n$}\label{Qn}

In this subsection, we highlight two irreducible representations of $\G_n$, namely $\mathcal{Q}_n$ and $\mathcal{S}_n$, that play a fundamental role throughout the article. We first observe that $\nu$ is the unique irreducible quotient of the representation $\nu_{\mathds{1}_1}^{\frac{n-1}{2}}\times\nu_{\mathds{1}_1}^{\frac{n-3}{2}}\times\cdots\times\nu_{\mathds{1}_1}^{-\frac{n-3}{2}}\in\Rep(\G_{n-1})$. We begin with the following definition.

\begin{definition}\label{Ln}
For $n>2$, we define $\mathcal{Q}_n$ as the unique irreducible quotient of $\nu\times\nu_{\mathds{1}_{1}}^{\frac{n+1}{2}}$, and $\mathcal{S}_n$ as the unique irreducible subrepresentation of $\nu^{n}\St_2\times\mathds{1}_{n-2}$.

\end{definition}

By Lemma~\ref{l1}, $\mathcal{Q}_n$ is well defined and fits into the short exact sequence
\[
0\longrightarrow\nu^2\longrightarrow\nu\times\nu_{\mathds{1}_{1}}^{\frac{n+1}{2}}\longrightarrow \mathcal{Q}_n\longrightarrow0.
\]

The proof of the following result is analogous to that of Venketasubramanian~\cite[Lemma~3.4]{Ven13}.

\begin{lemma}\label{venk}
Let $n>2$ and $\chi$ be a character of $\G_2$. Then
$
\chi\St_2\times\mathds{1}_{n-2}
$
is reducible if and only if $\chi=\nu^{\pm n}$.
In particular, $\nu^n{\St_2}\times\mathds{1}_{n-2}$ has length two, with unique irreducible subrepresentation $\mathcal{S}_n$ and unique irreducible quotient $\mathcal{Q}_n$.
\end{lemma}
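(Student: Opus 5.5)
We would prove Lemma~\ref{venk} in the spirit of \cite[Lemma~3.4]{Ven13}, translating everything into the language of segments for $\rho=\mathds{1}_1$, with $\nu_{\mathds{1}_1}=\nu^2$.

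\emph{Reducibility points.} A character of $\G_2$ has the form $\chi=\mu\circ\Nrd$, and $\chi\St_2=\mathcal Q([a,a+1]_{(\mu)})$ for a suitable half-integer shift $a$. The trivial representation is $\mathds{1}_{n-2}=\mathcal S\bigl(\bigl[-\tfrac{n-3}{2},\tfrac{n-3}{2}\bigr]_{(\mathds{1}_1)}\bigr)$. To decide irreducibility of a product of a Zelevinsky-type and a Langlands-type representation, we will pass to the Zelevinsky side. Zelevinsky/Aubert duality is exact on the Grothendieck group and compatible with products, so it preserves (ir)reducibility of the product. It sends $\chi\St_2$ to $\chi\mathds{1}_2$, and sends $\mathds{1}_{n-2}$ to $\St_{n-2}$, which is a $\mathcal Q$ of a single segment. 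This still mixes types, so instead we argue via Jacquet modules as Venketasubramanian does.

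If $\mu$ is not of the form $\nu^{t}$ with the right integrality, all cuspidal supports are disjoint from the lines in question and the product is irreducible. So we reduce to $\chi=\nu^{2t}$, that is, $\chi\St_2$ having cuspidal support $\{\nu_{\mathds{1}_1}^{t-1/2},\nu_{\mathds{1}_1}^{t+1/2}\}$. We then separate three situations:
\begin{itemize}
\item the support of $\chi\St_2$ lies well outside $\bigl[-\tfrac{n-1}{2},\tfrac{n-1}{2}\bigr]$;
\item it lies inside;
\item it sits exactly at an end, namely $t=\pm\tfrac n2$.
\end{itemize}

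\emph{Irreducibility away from $\nu^{\pm n}$.}

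\begin{enumerate}
\item \textbf{Enumerate possible constituents.} Since $\chi\St_2\times\mathds{1}_{n-2}$ is a subquotient of $\lambda(\mathfrak a)$, where $\mathfrak a$ consists of the two singletons of $\chi\St_2$ together with the segment of $\mathds{1}_{n-2}$, Lemma~\ref{l2} lists every possible constituent as $\mathcal S(\mathfrak b)$ with $\mathfrak b\le\mathfrak a$.
\item \textbf{Compute one Jacquet module.} We compute $r_{(1,n-1)}$ or the full minimal Jacquet module of $\chi\St_2\times\mathds{1}_{n-2}$ using the geometric lemma.
\item \textbf{Show only one candidate fits.} We check that only one candidate $\mathcal S(\mathfrak b)$ can account for the leading exponent. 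This follows from the fact that the Jacquet module of $\mathds{1}_{n-2}$ is a single character, while that of $\St_2$ is one character.
\item \textbf{Rule out the rest.} A multiplicity count of the remaining exponents excludes any further constituents.
\end{enumerate}

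The delicate case is when the support of $\chi\St_2$ overlaps the interior of the segment, i.e. $t$ a half-integer with $|t|<\tfrac n2$. Here we expect to use the identity $\chi\St_2\times\mathds{1}_{n-2}\simeq\mathds{1}_{n-2}\times\chi\St_2$ and exhibit a nonzero intertwining operator in each direction. Composing them gives an isomorphism, or equivalently the Jacquet-module count shows that the semisimplification equals that of a single irreducible $\mathcal S(\mathfrak b)$.

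\emph{Reducibility and length two at $\chi=\nu^{n}$.} Take $\chi=\nu^n$, so that the support of $\chi\St_2$ is $\{\nu_{\mathds{1}_1}^{\frac{n-1}{2}},\nu_{\mathds{1}_1}^{\frac{n+1}{2}}\}$.
\begin{enumerate}
\item \textbf{Quotient $\mathcal Q_n$.} The product $\nu^n\St_2\times\mathds{1}_{n-2}$ is a subrepresentation of $\nu_{\mathds{1}_1}^{\frac{n+1}{2}}\times\nu_{\mathds{1}_1}^{\frac{n-1}{2}}\times\mathds{1}_{n-2}$. Note that $\nu_{\mathds{1}_1}^{\frac{n-1}{2}}\times\mathds{1}_{n-2}$ surjects onto $\nu=\mathds{1}_{n-1}\nu$ (up to the paper's normalization, namely the character $\nu$ of $\G_{n-1}$ described before Definition~\ref{Ln}). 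This produces a nonzero map from $\nu^n\St_2\times\mathds{1}_{n-2}$ onto a constituent of $\nu\times\nu_{\mathds{1}_1}^{\frac{n+1}{2}}$ (after reordering). From this we identify $\mathcal Q_n$ as a quotient, not isomorphic to the character $\nu^2$.
\item \textbf{Subrepresentation $\mathcal S_n$.} By definition $\mathcal S_n$ is the unique irreducible subrepresentation; uniqueness follows from the standard-module property, since $\nu^n\St_2\times\mathds{1}_{n-2}$ embeds in a $\lambda(\mathfrak a)$ with the right ordering.
\item \textbf{Length exactly two.} We compare Jacquet modules: the exponents of $\nu^n\St_2\times\mathds{1}_{n-2}$ split exactly into those of $\mathcal S_n$ and those of $\mathcal Q_n$.
\end{enumerate}
The case $\chi=\nu^{-n}$ follows by taking contragredients together with $\theta$-type symmetry, since $\widetilde{\St_2}=\St_2$ and $\widetilde{\mathds{1}}=\mathds{1}$.

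\emph{Main obstacle.} The hardest step is the irreducibility when the support of $\chi\St_2$ lies strictly inside the segment. In that range Lemma~\ref{l1} does not apply directly, because we have three pieces of mixed type. The Jacquet module bookkeeping must be done carefully: we will compute the full minimal Jacquet module via the geometric lemma and compare it against the candidates $\mathcal S(\mathfrak b)$ given by Lemma~\ref{l2}.
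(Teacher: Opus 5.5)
The paper gives no argument of its own beyond the pointer to Venketasubramanian's Lemma~3.4. Your framework is the right one: pass to the line of $\mathds{1}_1$, list the candidates $\mathcal S(\mathfrak b)$ with $\mathfrak b\le\mathfrak a$ via Lemma~\ref{l2}, and eliminate them with minimal Jacquet modules. However, the decisive part of the ``only if'' direction, namely irreducibility for $\chi\neq\nu^{\pm n}$, is never carried out.

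\textbf{The ``only if'' direction.} In step~3, the fact that $\chi\St_2$ and $\mathds 1_{n-2}$ have one-dimensional minimal Jacquet modules excludes no candidate. Your ``delicate case'' argument assumes $\chi\St_2\times\mathds 1_{n-2}\simeq\mathds 1_{n-2}\times\chi\St_2$, which is not available before irreducibility is known. Moreover, nonzero intertwining operators in both directions can compose to zero; they do for $\nu\times\nu^{-1}$ on $\G_2$.

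Here is what works. Write $\chi=\nu^{2t}$ and $\Delta=[-\tfrac{n-3}{2},\tfrac{n-3}{2}]_{(\mathds 1_1)}$. The exponents of $\chi\St_2\times\mathds 1_{n-2}$ are then the $\binom n2$ shuffles of $(t+\tfrac12,t-\tfrac12)$ with $(-\tfrac{n-3}{2},\dots,\tfrac{n-3}{2})$.
\begin{itemize}
\item Suppose neither singleton $[t\pm\tfrac12]$ is linked with $\Delta$, i.e.\ $t\notin\{\pm\tfrac n2,\pm\tfrac{n-2}{2}\}$; this includes every $\chi$ off the line. Then the only candidate besides $\mathcal S(\mathfrak a)$ is $\mathcal S(\mathfrak b_1)=\chi\mathds 1_2\times\mathds 1_{n-2}$. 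In the Grothendieck group, $\lambda(\mathfrak a)=\chi\St_2\times\mathds 1_{n-2}+\chi\mathds 1_2\times\mathds 1_{n-2}$ has $n(n-1)=2\binom n2$ exponents, and $\mathcal S(\mathfrak a)$ occurs in it exactly once. Counting exponents therefore forces $\chi\St_2\times\mathds 1_{n-2}=\mathcal S(\mathfrak a)$. So your ``interior'' case is the easy one.
\item The genuinely delicate values, which your trichotomy does not isolate, are $\chi=\nu^{\pm(n-2)}$. There a singleton of $\chi\St_2$ is linked with $\Delta$, yet the product is irreducible. For $t=\tfrac{n-2}{2}$ and $n\ge4$ there are two further candidates: $\mathfrak b_1=\{[\tfrac{n-3}{2},\tfrac{n-1}{2}],\Delta\}$ and $\mathfrak b_2=\{[-\tfrac{n-3}{2},\tfrac{n-1}{2}],[\tfrac{n-3}{2}]\}$. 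Apply Frobenius reciprocity to $\mathcal S(\mathfrak b_1)\hookrightarrow\nu^{n-2}\mathds 1_2\times\mathds 1_{n-2}$ and to $\mathcal S(\mathfrak b_2)\simeq\nu_{\mathds 1_1}^{\frac{n-3}{2}}\times\nu$. Both then have an exponent beginning with $\tfrac{n-3}{2}$, whereas every exponent of the product begins with $\tfrac{n-1}{2}$ or $-\tfrac{n-3}{2}$.
\item The value $t=-\tfrac{n-2}{2}$ follows by contragredients. For $n=3$ everything is Lemma~\ref{l1}, since $\mathds 1_1=\mathcal Q([0]_{(\mathds 1_1)})$.
\end{itemize}

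\textbf{The endpoint $\chi=\nu^n$.} Your composite $\nu^n\St_2\times\mathds 1_{n-2}\hookrightarrow\nu_{\mathds 1_1}^{\frac{n+1}{2}}\times\nu_{\mathds 1_1}^{\frac{n-1}{2}}\times\mathds 1_{n-2}\twoheadrightarrow\nu_{\mathds 1_1}^{\frac{n+1}{2}}\times\nu$ is not shown to be nonzero; an injection followed by a surjection can vanish. Use Frobenius reciprocity instead.
\begin{itemize}
\item The $\nu_{\mathds 1_1}^{\frac{n+1}{2}}$-part of $r_{(1,n-1)}(\nu^n\St_2\times\mathds 1_{n-2})$ is a direct summand, because the other geometric-lemma term has $\G_1$-part $\nu_{\mathds 1_1}^{-\frac{n-3}{2}}$. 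This summand equals $\nu_{\mathds 1_1}^{\frac{n+1}{2}}\otimes(\nu_{\mathds 1_1}^{\frac{n-1}{2}}\times\mathds 1_{n-2})$, which surjects onto $\nu_{\mathds 1_1}^{\frac{n+1}{2}}\otimes\nu$ by Lemma~\ref{l1}.
\item The image of the resulting nonzero map is a nonzero subrepresentation of $\nu_{\mathds 1_1}^{\frac{n+1}{2}}\times\nu$. That representation has length two, with socle $\mathcal Q_n$ and cosocle $\nu^2$.
\item The image is exactly $\mathcal Q_n$. Indeed, $\nu^2$ is not a constituent of the product: its only exponent has $\tfrac{n-1}{2}$ before $\tfrac{n+1}{2}$, while in the product $\tfrac{n+1}{2}$ always precedes $\tfrac{n-1}{2}$.
\end{itemize}

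Likewise, ``length exactly two'' cannot be read off by splitting exponents between $\mathcal S_n$ and $\mathcal Q_n$, since $r_{\min}(\mathcal S_n)$ is not known. Instead, the candidates are $\mathcal S(\mathfrak a)=\mathcal S_n$, $\mathcal S(\{[\tfrac{n-1}{2},\tfrac{n+1}{2}],\Delta\})$, $\mathcal Q_n$ and $\nu^2$. The same ordering argument removes the second and the fourth. Finally, $\mathcal Q_n$ has multiplicity at most one, because its exponent $(\tfrac{n+1}{2},-\tfrac{n-3}{2},\dots,\tfrac{n-1}{2})$ occurs only once among the shuffles. With these repairs your outline becomes a complete proof.
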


\begin{remark}\label{LnZn}

The representation $\mathcal{Q}_n$ and its contragredient admit descriptions in both the Zelevinsky and Langlands classifications:

\begin{itemize}

\item $\mathcal{Q}_n=\mathcal{S}\left([-\frac{n-3}{2},\frac{n-1}{2}]_{(\mathds{1}_1)},[\frac{n+1}{2}]_{(\mathds{1}_1)}\right)\simeq \mathcal{Q}\left([-\frac{n-3}{2}]_{(\mathds{1}_1)},\ldots,[\frac{n-3}{2}]_{(\mathds{1}_1)},[\frac{n-1}{2},\frac{n+1}{2}]_{(\mathds{1}_1)}\right)$.

\item $\widetilde{\mathcal{Q}}_n=\mathcal{S}\left([-\frac{n+1}{2}]_{(\mathds{1}_1)},[-\frac{n-1}{2},\frac{n-3}{2}]_{(\mathds{1}_1)}\right)\simeq \mathcal{Q}\left([-\frac{n-3}{2}]_{(\mathds{1}_1)},\ldots,[\frac{n-3}{2}]_{(\mathds{1}_1)},[-\frac{n+1}{2},-\frac{n-1}{2}]_{(\mathds{1}_1)}\right)$.

\end{itemize}

\end{remark}

\subsection{Some known results on generalized linear periods}  
       In this subsection, we recall several results concerning generalized linear periods. We begin with the following lemma of Mitra \emph{et al.}~\cite[Lemma~2.3]{mitra17}.

\begin{lemma}\label{basic}
       Let $p,q$ be nonnegative integers with $p+q=n$, and let $s\in\mathbb{R}$. If $\pi\in\Rep(\G_n)$ admits a generalized linear period with respect to $(\Ha_{p,q},\chi_s)$, then so does an Jordan--Hölder constituent of $\pi$.
\end{lemma}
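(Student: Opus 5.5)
The plan is to argue by induction along a composition series, using only left exactness of $\Hom_{\Ha_{p,q}}(-,\chi_s)$. Here "an Jordan--Hölder constituent" is read as "some Jordan--Hölder constituent". Since $\pi\in\Rep(\G_n)$ has finite length, fix a composition series
\[
0=\pi_0\subset\pi_1\subset\cdots\subset\pi_m=\pi
\]
with irreducible subquotients $\pi_i/\pi_{i-1}$. The goal is to show that if $\Hom_{\Ha_{p,q}}(\pi,\chi_s)\neq0$, then $\Hom_{\Ha_{p,q}}(\pi_i/\pi_{i-1},\chi_s)\neq0$ for some $i$.

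The key step is the following. For a short exact sequence $0\to\pi'\to\pi\to\pi''\to0$ of smooth $\G_n$-representations, restrict it to $\Ha_{p,q}$; it remains exact as a sequence of vector spaces. Applying the contravariant functor $\Hom_{\Ha_{p,q}}(-,\chi_s)$ gives the exact sequence
\[
0\to\Hom_{\Ha_{p,q}}(\pi'',\chi_s)\to\Hom_{\Ha_{p,q}}(\pi,\chi_s)\to\Hom_{\Ha_{p,q}}(\pi',\chi_s).
\]
This only requires that a functional vanishing on $\pi'$ factors through $\pi''$, and the factored functional is automatically $(\Ha_{p,q},\chi_s)$-equivariant. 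Hence if the middle term is nonzero, at least one of the outer terms is nonzero.

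Now induct on the length $m$. Take a nonzero $\ell\in\Hom_{\Ha_{p,q}}(\pi,\chi_s)$ and consider its restriction to $\pi_{m-1}$.
\begin{itemize}
\item If the restriction is nonzero, then $\pi_{m-1}$ admits the period, and by induction so does one of its constituents. These constituents are among those of $\pi$.
\item If the restriction is zero, then $\ell$ factors through the irreducible quotient $\pi/\pi_{m-1}$, which is a constituent of $\pi$.
\end{itemize}
The base case $m=1$ is trivial.

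There is no real obstacle. The only point needing care is that the statement uses the algebraic $\Hom$, with no continuity requirement. Restriction and passage to quotients therefore preserve equivariance without any topological issues, and finite length guarantees that the induction terminates.
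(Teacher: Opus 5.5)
Your proof is correct: left exactness of $\Hom_{\Ha_{p,q}}(-,\chi_s)$ combined with induction on the length of a composition series is the standard argument for this statement. The paper does not prove the lemma itself and instead cites Mitra \emph{et al.}\ (Lemma~2.3), which is essentially this argument.
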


The following result follows by an argument analogous to the proof of
\cite[Lemma~3.1 and Lemma~3.2]{yang2022linear}.

\begin{lemma}\label{cont}

\noindent{\upshape(1)} Let $\chi$ be a character of $\G_n$ admitting a generalized linear period with respect to $(\Ha_{1,n-1},\chi_s)$. Then $s=0$ and $\chi=\mathds{1}_n$.

\noindent{\upshape(2)}
Let $n_1,\ldots,n_t$ be positive integers such that
$n_1+\cdots+n_t=n$, and let
$\pi_i\in\Irr(\G_{n_i})$ for $1\leq i\leq t$. If
$
\Psi=\pi_1\times\cdots\times\pi_t
$
admits a generalized linear period with respect to
$(\Ha_{1,n-1},\chi_s)$, then
$
\Psi^\vee
=
\widetilde{\pi_t}\times\cdots\times\widetilde{\pi_1}$
admits a generalized linear period with respect to
$(\Ha_{1,n-1},\chi_{-s})$.

\end{lemma}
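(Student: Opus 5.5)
We treat the two parts of Lemma~\ref{cont} separately, following the argument of \cite[Lemma~3.1 and Lemma~3.2]{yang2022linear}.

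\emph{Part (1).} A character $\chi$ of $\G_n$ factors through the reduced norm, so $\chi=\mu\circ\Nrd_{\D/\F}$ for some character $\mu$ of $\F^\times$, because the derived group of $\G_n$ is the kernel of $\Nrd_{\D/\F}$. The condition $\Hom_{\Ha_{1,n-1}}(\chi,\chi_s)\neq0$ then says that $\chi|_{\Ha_{1,n-1}}=\chi_s$. Evaluating on $\diag(g_1,I_{n-1})$ with $g_1\in\G_1$ gives $\mu(\Nrd(g_1))=|\Nrd(g_1)|^{2s}$. Evaluating on $\diag(1,g_2)$ with $g_2\in\G_{n-1}$ gives $\mu(\Nrd(g_2))=|\Nrd(g_2)|^{-2s}$. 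Since $\Nrd$ is surjective onto $\F^\times$ already from $\G_1$ (and also from $\G_{n-1}$), we get $\mu=|\cdot|^{2s}=|\cdot|^{-2s}$ on $\F^\times$. Choosing $x$ with $|x|\neq1$ forces $s=0$, and then $\mu$ is trivial, so $\chi=\mathds{1}_n$.

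\emph{Part (2).} The plan is to combine the standard contragredient identity for induced representations with the outer automorphism $g\mapsto {}^tg^{-1}$, suitably adapted to $\D$.
\begin{enumerate}
\item We have $\widetilde{\Psi}\simeq\widetilde{\pi_1}\times\cdots\times\widetilde{\pi_t}$.
\item Over $\D$ transposition is not an anti-automorphism, so we instead use $\iota(g)=w\,{}^{\tau}g^{-1}w^{-1}$. Here $\tau$ is the conjugate-transpose built from the canonical involution $x\mapsto\bar x$ of $\D$, which satisfies $\overline{xy}=\bar y\bar x$, and $w$ is the antidiagonal permutation matrix.
\item Since $\Nrd(\bar x)=\Nrd(x)$, the map $\iota$ is an automorphism of $\G_n$ that preserves $\nu$. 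It carries the standard parabolic of type $(n_1,\dots,n_t)$ to the one of type $(n_t,\dots,n_1)$ and reverses the order of the blocks.
\item By the Gelfand--Kazhdan type result for inner forms (Mui\'c--Savin, or Raghuram for $\GL_n(\D)$), $\rho\circ\iota\simeq\widetilde\rho$ for every $\rho\in\Irr(\G_m)$.
\item Combining these, $\Psi\circ\iota\simeq(\pi_t\circ\iota)\times\cdots\times(\pi_1\circ\iota)\simeq\widetilde{\pi_t}\times\cdots\times\widetilde{\pi_1}=\Psi^\vee$.
\end{enumerate}
Note that the reversal of the blocks is exactly what the definition of $\Psi^\vee$ requires.

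It remains to see how $\iota$ interacts with $(\Ha_{1,n-1},\chi_s)$. The automorphism $\iota$ sends $\Ha_{1,n-1}$ to $\Ha_{n-1,1}$. We therefore compose with conjugation by the permutation matrix $\eta$ that exchanges the first block with the rest, which maps $\Ha_{n-1,1}$ back onto $\Ha_{1,n-1}$. Write $\iota'=\Ad(\eta)\circ\iota$; this automorphism preserves $\Ha_{1,n-1}$, and since $\Ad(\eta)$ is inner we still have $\Psi\circ\iota'\simeq\Psi^\vee$.

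Now let $\ell\in\Hom_{\Ha_{1,n-1}}(\Psi,\chi_s)$. The same functional $\ell$ is $(\Ha_{1,n-1},\chi_s\circ\iota'^{-1})$-equivariant for $\Psi\circ\iota'$. On $\diag(g_1,g_2)$, the map $\iota'$ replaces $g_i$ by an element whose reduced norm is $\Nrd(g_i)^{-1}$, because of the inverse in $\iota$. Hence $\chi_s\circ\iota'^{-1}=\chi_{-s}$, and $\Psi^\vee$ admits a generalized linear period with respect to $(\Ha_{1,n-1},\chi_{-s})$.

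The main obstacle is step 4, the identification $\rho\circ\iota\simeq\widetilde\rho$ for $\G_m$. I would cite it from the literature rather than reprove it, and then check only that $\iota$ is compatible with the normalized parabolic induction. That compatibility is clear, because $\iota$ preserves the modulus characters.
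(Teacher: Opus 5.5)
Your proposal is correct and is essentially the argument the paper intends; the paper gives no details and simply defers to \cite[Lemmas~3.1 and~3.2]{yang2022linear}. That argument is: characters factor through the (reduced) norm for part (1), and for part (2) a Gelfand--Kazhdan-type involution that carries $\Ha_{1,n-1}$ to a conjugate subgroup. Over $\D$ this involution is the Mui\'c--Savin involution $g\mapsto{}^t\bar g^{-1}$ you use, and your $w$ and $\eta$ are inner adjustments that do not change isomorphism classes.

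Two small slips. In step 3, $\iota$ does not preserve $\nu$ but inverts it, $\nu\circ\iota=\nu^{-1}$; this is what you correctly use at the end. Also, part (1) tacitly requires $n\ge 2$, so that $\Nrd$ is surjective from $\G_{n-1}$.
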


Next, we recall the results of Anandavardhanan \emph{et al.}~\cite[Theorem~3.8 and Theorem~C.3]{anandavardhanan2024sign} on generalized linear periods of discrete series representations and their products.
\begin{theorem}\label{anand}

\noindent{\upshape(1)} For $n>2$, no discrete series representation of $\G_n$ admits a generalized linear period with respect to $(\Ha_{1,n-1},\chi_s)$.

\noindent{\upshape(2)} Let $\pi_1,\pi_2$ be discrete series representations such that $\pi_1\times\pi_2\in\Irr(\G_n)$, where $n\ge4$. Then $\pi_1\times\pi_2$ does not admit a generalized linear period with respect to $(\Ha_{1,n-1},\chi_s)$.

\end{theorem}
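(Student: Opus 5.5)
The plan is to analyse $\Hom_{\Ha_{1,n-1}}(\pi,\chi_s)$ through the geometry of the symmetric space $\Ha_{1,n-1}\backslash\G_n$, proving part (1) first (supercuspidal case, then non-supercuspidal discrete series) and deducing part (2) from the same technique.

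\textbf{Part (1), supercuspidal case.} Let $\pi$ be supercuspidal and suppose $0\neq\ell\in\Hom_{\Ha_{1,n-1}}(\pi,\chi_s)$. The matrix coefficients $g\mapsto\ell(\pi(g)v)$ are then compactly supported modulo $\Ha_{1,n-1}Z$, so $\pi$ is $\Ha_{1,n-1}$-relatively cuspidal. This should force $\pi$ to occur discretely in $L^2(\Ha_{1,n-1}Z\backslash\G_n,\chi_s)$ and to have a nonzero relative character. Here I would use that $\theta=\Ad(\delta_{1,n-1})$ has $\theta$-split rank one. A maximal $\theta$-split torus is one-dimensional and lies in the Levi subgroup $\G_1\times\G_{n-2}\times\G_1$ of a $\theta$-split parabolic. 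Pushing the period to that parabolic (the standard construction of Kato--Takano, or an explicit unfolding in the style of Prasad's argument for $\GL_{n-1}(\F)\subset\GL_n(\F)$) reduces the question to a period on a Levi subgroup. For $n>2$ the middle block $\G_{n-2}$ is nontrivial, and the induced period there is invariant under all of $\G_{n-2}$. For a generic-type supercuspidal this contradicts the vanishing of $\G_{n-2}$-invariant functionals on the relevant constituent.

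Alternatively, I would transfer to $\GL_{2n}(\F)$ via $\JL$ and use the known characterization of distinguished supercuspidals by a pole of $L(s,\pi,\wedge^2)$ together with an $L$-function factorization. This route would show that $\Ha_{1,n-1}$ is too small a subgroup: $\chi_s$-distinction would force a pole of a Rankin--Selberg factor $L(s,\pi\times\mathds{1})$, which is impossible for supercuspidal $\pi$ when $n>2$.

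\textbf{Part (1), non-supercuspidal discrete series.} Write $\pi=\mathcal{Q}([a,b]_{(\rho)})$, which embeds into $\nu_\rho^{b}\rho\times\cdots\times\nu_\rho^{a}\rho$. By Lemma~\ref{basic} and Frobenius-type arguments it suffices to control $\Hom_{\Ha_{1,n-1}}$ of suitable induced representations. I would apply the Bernstein--Zelevinsky geometric lemma to the finitely many $\Par\backslash\G_n/\Ha_{1,n-1}$ double cosets. Each orbit contributes a space of the form
\[
\Hom_{\M\cap w\Ha w^{-1}}\bigl(r(\pi),\ \chi_s\,\delta\bigr),
\]
where $r(\pi)$ is a Jacquet module and $\delta$ is a modulus factor. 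The key input is Casselman's square-integrability criterion: all exponents of $r_{(l,n-l)}(\pi)$ have strictly positive real parts. Each orbit, on the other hand, imposes an exact equation on the central exponent involving only $s$ and the block sizes. I expect the closed orbit and the open orbit to give incompatible equations, where one needs $s\ge\frac{n-1}{2}$-type positivity and the other, after applying Lemma~\ref{cont}(2) to $\widetilde\pi$, needs the opposite sign. Lemma~\ref{cont}(1) kills the orbits where the period lands on a character of a block of size at least two.

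\textbf{Part (2).} Here $\pi_1\times\pi_2$ is irreducible and $n\ge4$. I would run the same geometric lemma on $\Par_{(k,n-k)}\backslash\G_n/\Ha_{1,n-1}$, whose double cosets are essentially two: the $\G_1$-factor lies either in the $\G_k$ block or in the $\G_{n-k}$ block, together with an intermediate orbit. Each orbit contributes $\Hom$ into periods of $\pi_1$ or $\pi_2$ relative to $\Ha_{1,k-1}$ or $\Ha_{1,n-k-1}$, possibly twisted, or full $\G_{k-1}$-invariant functionals on a Jacquet module. Part (1) kills the cases with block size greater than $2$. For block sizes $1$ and $2$ I would use explicit knowledge of discrete series of $\G_1$ and $\G_2$, combined with Casselman's exponent inequalities, to rule out the remaining orbits. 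Irreducibility of $\pi_1\times\pi_2$ lets me swap the order freely, so both inequalities are available.

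\textbf{Main obstacle.} I expect the supercuspidal case of (1) to be the hardest step, since no Jacquet module is available. I would attack it first through the Jacquet--Langlands transfer combined with a global or $L$-function argument. If that fails, I would use the relative-cuspidality argument via the $\theta$-split parabolic attached to the rank-one symmetric space.
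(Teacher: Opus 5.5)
The paper does not prove this statement. It imports it from Anandavardhanan et al.\ (Theorem~3.8 and Theorem~C.3 there), and it relies on it exactly in the cases your method cannot reach. Your proposal has genuine gaps.

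\textbf{Supercuspidal case.} Neither of your two routes is an argument. For supercuspidal $\pi$ every Jacquet module vanishes. Hence the Kato--Takano relative Jacquet module of any $(\Ha_{1,n-1},\chi_s)$-equivariant functional along a proper $\theta$-split parabolic is zero; that vanishing is what relative cuspidality means. So ``pushing the period to $\G_1\times\G_{n-2}\times\G_1$'' produces nothing, and there is no constituent on which $\G_{n-2}$-invariance could be contradicted. The $\JL$ route assumes that $(\Ha_{1,n-1},\chi_s)$-distinction forces a pole of some $L$-factor of $\JL(\pi)$. You neither state nor justify this implication, and the exterior-square criterion you allude to concerns $\Ha_{n,n}$/Shalika-type periods, not $\Ha_{1,n-1}$.

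\textbf{Non-supercuspidal discrete series and part (2).} The geometric lemma describes functionals on an \emph{induced} representation in terms of Jacquet modules of its inducing data, not of $\pi$. It yields only necessary conditions. Non-distinction passes from an induced $I$ to a quotient $\pi$ only when those conditions actually fail for $I$, and in the decisive cases they do not fail.
\begin{itemize}
\item The central character of $\St_3$ forces $s=0$. $\St_3$ is the unique irreducible quotient of $\nu_{\mathds{1}_1}^{-1}\times\nu\St_2$, and $r_{(1,1)}(\nu\St_2)=\nu_{\mathds{1}_1}\otimes\mathds{1}_1$. So the open-orbit condition \eqref{3.1}\textup{(III)} holds at $s=0$. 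The same happens for $\nu^{-1}\St_2\times\nu_{\mathds{1}_1}$ with $k=2$.
\item In part (2) with $n=4$ and $k=2$, the representation $\St_2\times\St_2$ at $s=0$ satisfies \eqref{4.1}\textup{(III)}. The exponents used there, $\nu_{\mathds{1}_1}^{1/2}\otimes\nu_{\mathds{1}_1}^{-1/2}$, are exactly those of $\St_2$, so Casselman's criterion has nothing to contradict. Swapping the factors changes nothing.
\end{itemize}
Thus the incompatibility between orbits that you expect does not occur. These are precisely the constituents for which the paper invokes this theorem: Case~(3b)(i) of Theorem~\ref{G3}, where $\nu^{-2}\mathcal{S}_3\simeq\St_3$ is discarded, and Case~(4) of Theorem~\ref{G4}.

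Your reduction in part (2) does work for $n\ge 5$ once part (1) is known. There the closed orbits are excluded by part (1), and the open orbit forces $k-1\le 1$ and $n-k-1\le 1$.

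What is missing is an input that controls equivariant functionals on the discrete series (or on $\St_2\times\St_2$) itself, beyond exponent bookkeeping on induced representations. This is exactly what the cited results supply.
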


\subsection{Consequence of Geometric Lemma}\label{orbit}

            Let $\G_n=\GL_n(\D)$. For $1\leq k\leq n-1$, let $\Par_{k,n-k}$ denote the standard parabolic subgroup of $\G_n$ corresponding to the partition $(k,n-k)$. In this subsection, we use the geometric lemma to derive necessary conditions for the existence of a generalized linear period with respect to $(\Ha_{1,n-1},\chi_s)$ for the representation $\Ind_{\Par_{k,n-k}}^{\G_n}(\pi)$. For this, we use the orbit decomposition of $\Par_{k,n-k}\backslash\G_n/\Ha_{1,n-1}$, which consists of three orbits; see~\cite[Section~4]{yang2022linear} and~\cite[\S3.2]{anandavardhanan2024sign} for details. The first two orbits are closed, while the third is open with stabilizer $L_{n,k}=\G_{k-1}\times\Delta(\G_1)\times\G_{n-k-1}$, where $\Delta(\G_1)=\{(g,g):g\in\G_1\}$.
Let $\omega_{s,n,k}$ denote the character of $L_{n,k}$ defined by
\[
\omega_{s,n,k}
=
\nu^{-2s+n-k-1}\otimes\mathds{1}_{\Delta(\G_1)}
\otimes\nu^{-(2s+k-1)}.
\]
Thus, for $(a,g,g,b)\in L_{n,k}$, we have
\[
\omega_{s,n,k}(a,g,g,b)
=
\nu(a)^{-2s+n-k-1}\nu(b)^{-(2s+k-1)}.
\]
The geometric lemma then yields the following necessary condition;
see~\cite[Corollary~5.2]{yang2022linear}.

\begin{lemma}\label{lem3}
          Let $\pi_1\in\Rep(\G_k)$ and $\pi_2\in\Rep(\G_{n-k})$, with $1\leq k\leq n-1$. If $\pi_1\times\pi_2$ admits a generalized linear period with respect to $(\Ha_{1,n-1},\chi_s)$, then at least one of the following holds:

         \begin{itemize}
             \item[\upshape(1)]  
             $\Hom_{\G_k} \left(\pi_1,\nu^{-2s+n-k-2}\right)\neq0$  and $\pi_2$ is $(\Ha_{1,n-k-1},\chi_{s+\frac{k}{2}})$-distinguished. \hfill\textnormal{(closed orbit I)}

          \item[\upshape(2)]  $\pi_1$ is $(\Ha_{1,k-1},\chi_{s+\frac{k-n}{2}})$-distinguished and $\Hom_{\G_{n-k}}(\pi_2,\nu^{-2s-k+2} )\neq 0$.  \hfill\textnormal{(closed orbit II)}

          \item[\upshape(3)]  $\Hom_{L_{n,k}}\left(r_{(k-1,1)}(\pi_1)\otimes r_{(1,n-k-1)}(\pi_2),\omega_{s,n,k}\right)\neq0. \hfill\textnormal{(open orbit III)}$ \end{itemize}
\end{lemma}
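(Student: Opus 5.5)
The statement is Lemma~\ref{lem3}, the standard Mackey-theoretic necessary condition. I would prove it by restricting $\Pi=\pi_1\times\pi_2=\Ind_{\Par_{k,n-k}}^{\G_n}(\pi_1\otimes\pi_2)$ to $\Ha_{1,n-1}$ and filtering it according to the three double cosets in $\Par_{k,n-k}\backslash\G_n/\Ha_{1,n-1}$.

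\textbf{Step 1: the filtration.} Order the orbits so that the open orbit III comes first and the two closed orbits I and II come last. Bernstein--Zelevinsky theory then gives an $\Ha_{1,n-1}$-stable filtration of $\Pi|_{\Ha_{1,n-1}}$. Its graded pieces are the compactly induced representations
$$\operatorname{ind}_{\Ha_{1,n-1}\cap w^{-1}\Par_{k,n-k} w}^{\Ha_{1,n-1}}\Bigl((\pi_1\otimes\pi_2)^{w}\,\delta_w\Bigr),$$
where $w$ runs over the orbit representatives and $\delta_w$ is the appropriate ratio of modulus characters. Since $\Hom_{\Ha_{1,n-1}}(-,\chi_s)$ is left exact on short exact sequences in the contravariant sense, $\Hom_{\Ha_{1,n-1}}(\Pi,\chi_s)\neq0$ forces $\Hom_{\Ha_{1,n-1}}(\text{piece}_w,\chi_s)\neq0$ for at least one $w$.

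\textbf{Step 2: Frobenius reciprocity on each piece.} Applying Frobenius reciprocity turns this into
$$\Hom_{\Ha_w}\bigl(\pi_1\otimes\pi_2,\ \chi_s\,\delta'_w\bigr)\neq0,$$
where $\Ha_w=\Ha_{1,n-1}\cap w^{-1}\Par_{k,n-k}w$ is viewed inside the Levi $\M_{(k,n-k)}$ after passing to the unipotent coinvariants where necessary, and $\delta'_w$ collects the modulus characters. I then treat the three orbits separately.
\begin{itemize}
\item[(I)] For the closed orbit with representative $w=1$, the intersection $\Ha_{1,n-1}\cap\Par_{k,n-k}$ projects onto the Levi as $\Ha_{1,k-1}\times\G_{n-k}$. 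Up to conjugation, the closed orbit whose stabilizer makes $\pi_1$ equivariant under all of $\G_k$ has $\G_k\times\Ha_{1,n-k-1}$ acting. The unipotent radical acts trivially on $\chi_s$, so one obtains a character condition on $\pi_1$ together with $(\Ha_{1,n-k-1},\chi_{s'})$-distinction of $\pi_2$.
\item[(II)] The second closed orbit is the symmetric case, with the roles of $\pi_1$ and $\pi_2$ exchanged.
\item[(III)] For the open orbit, the stabilizer is $L_{n,k}$ extended by unipotent parts. Passing to coinvariants replaces $\pi_1\otimes\pi_2$ by the Jacquet modules $r_{(k-1,1)}(\pi_1)\otimes r_{(1,n-k-1)}(\pi_2)$, with the diagonal $\G_1$ embedded as $\Delta(\G_1)$.
\end{itemize}

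\textbf{Step 3: computing the characters (main obstacle).} The only real work is the bookkeeping of the modulus characters and of the restriction of $\chi_s$. Here $\chi_s$ restricts to $\nu^{2s}$ on the $\G_1$ factor and to $\nu^{-2s}$ on the $\G_{n-1}$ part. The exponents then come from three sources:
\begin{itemize}
\item the normalization $\delta_{\Par_{k,n-k}}^{1/2}$, which contributes $\nu^{(n-k)/2}$ on $\G_k$ and $\nu^{-k/2}$ on $\G_{n-k}$, with the extra factor of $2$ coming from reduced-norm conventions over $\D$;
\item the modulus of $\Ha_w$;
\item the modulus of $\Ha_{1,n-1}$.
\end{itemize}
I would compute these explicitly, exactly as in \cite[Corollary~5.2]{yang2022linear} and \cite[\S3.2]{anandavardhanan2024sign}, with $|\det|$ replaced by $\nu$ (and hence exponents doubled where appropriate). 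On orbit I this yields the $\G_k$-character $\nu^{-2s+n-k-2}$ and the shift $s\mapsto s+\frac k2$. On orbit II it yields $\nu^{-2s-k+2}$ and $s\mapsto s+\frac{k-n}{2}$. On the open orbit it yields $\omega_{s,n,k}$, where the two copies of $\G_1$ must carry inverse exponents so that the twist on $\Delta(\G_1)$ is trivial.

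I expect this sign and exponent verification to be the error-prone step. I would check it by hand in the case $n=2$, $k=1$, comparing against the known $\Ha_{1,1}$-distinction of $\nu^{a}\times\nu^{b}$.
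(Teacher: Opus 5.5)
Your plan is the paper's own argument: the paper gives no computation beyond applying the geometric lemma to the three double cosets in $\Par_{k,n-k}\backslash\G_n/\Ha_{1,n-1}$ and citing \cite[Corollary~5.2]{yang2022linear} for the exponents, which is what your Steps~1--3 do.

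Two details need fixing when you write it out. First, your list of exponent sources omits the $\delta^{1/2}$-normalization of the Jacquet modules: the coinvariants equal $r_{(k-1,1)}(\pi_1)\otimes r_{(1,n-k-1)}(\pi_2)$ twisted by $\nu(b)\nu(a)^{n-2k}\nu(c)^{-1}$ on $(b,a,c)\in\G_{k-1}\times\Delta(\G_1)\times\G_{n-k-1}$. Only this factor removes the $\nu^{n-2k}$ that otherwise survives on $\Delta(\G_1)$, and it moves the outer exponents from $-2s+n-k$ and $-2s-k$ to those of $\omega_{s,n,k}$. Second, the $w=1$ orbit, whose stabilizer you correctly project to $\Ha_{1,k-1}\times\G_{n-k}$, yields condition~(2); condition~(1) comes from the other closed orbit, which is a different double coset and not a conjugate of $w=1$.
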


The conditions arising from the closed orbits $(\mathrm{I})$ and $(\mathrm{II})$ in Lemma~\ref{lem3} are also sufficient for the existence of generalized linear periods with respect to $(\Ha_{1,n-1},\chi_s)$; see~\cite[Proposition~3.2.16]{offb}. Thus, the two closed orbits give the following sufficient conditions.

\begin{lemma}\label{rem1}
For $1\leq k\leq n-1$, the following smooth representations of $\G_n$ admit generalized linear periods with respect to $(\Ha_{1,n-1},\chi_s)$:
\begin{enumerate}
\item[\upshape(1)] $\nu^{-2s+n-k-2}\times\eta$, where $\eta\in\Rep(\G_{n-k})$ is $(\Ha_{1,n-k-1},\chi_{s+\frac{k}{2}})$-distinguished.
\item[\upshape(2)] $\eta\times\nu^{-(2s+k-2)}$, where $\eta\in\Rep(\G_k)$ is $(\Ha_{1,k-1},\chi_{s+\frac{k-n}{2}})$-distinguished.
\end{enumerate}
\end{lemma}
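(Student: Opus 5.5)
The statement to prove is Lemma~\ref{rem1}, the sufficiency of the two closed-orbit conditions. The plan is to realize each closed orbit as a closed $\Ha_{1,n-1}$-stable subset of $\Par_{k,n-k}\backslash\G_n$. An equivariant functional on the inducing data then extends to a nonzero $\Ha_{1,n-1}$-equivariant functional on $\pi_1\times\pi_2$, by integrating over the closed orbit, which is compact modulo the stabilizer. This is the standard argument behind~\cite[Proposition~3.2.16]{offb}. We treat case~(1); case~(2) is symmetric and can also be obtained from~(1) by Lemma~\ref{cont}(2), since passing to $\Psi^\vee$ exchanges the two orbits and sends $s$ to $-s$. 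One checks that $\nu^{-(2s+k-2)}$ and the distinguishedness parameter $s+\frac{k-n}{2}$ match after this substitution.

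\textbf{Step 1.} Identify closed orbit~I. Its representative $x$ is such that $x\Par_{k,n-k}x^{-1}\cap\Ha_{1,n-1}$ is the subgroup $Q$ of $\Ha_{1,n-1}$ that stabilizes a $k$-dimensional subspace containing the first basis line. Concretely, $Q$ consists of elements $\diag(g_1,h)$ where $h\in\G_{n-1}$ lies in the parabolic of type $(k-1,n-k)$. Its Levi is $(\G_1\times\G_{k-1})\times\G_{n-k}$; the $\G_1\times\G_{k-1}$ part sits inside the $\G_k$ block, and $\G_{n-k}$ sits inside $\Ha_{1,n-k-1}$-type data for $\pi_2$ after reindexing.

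\textbf{Step 2.} Apply Frobenius reciprocity for the closed orbit, following Bernstein--Zelevinsky or Offen. This gives an injection
\[
\Hom_{Q}\bigl((\pi_1\otimes\pi_2)\,\delta,\ \chi_s\bigr)\hookrightarrow \Hom_{\Ha_{1,n-1}}(\pi_1\times\pi_2,\chi_s),
\]
where $\delta$ is the ratio of modular characters of $\Par_{k,n-k}$ (coming from the normalization) and of $Q$, restricted to $Q$. Because the orbit is closed, restriction to it gives a surjection of $\pi_1\times\pi_2$ onto $\operatorname{ind}_Q^{\Ha_{1,n-1}}$ of the twisted restriction. Since $\Ha_{1,n-1}/Q$ is compact, a $Q$-equivariant functional composed with integration over $\Ha_{1,n-1}/Q$ yields a nonzero $\Ha_{1,n-1}$-equivariant functional.

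\textbf{Step 3.} Compute $\delta$. This is the main obstacle: exponent bookkeeping on $\nu$ for each Levi factor. The target is that the required $Q$-equivariance becomes a $\G_k$-character condition on $\pi_1$ together with an $(\Ha_{1,n-k-1},\chi_{s+k/2})$-condition on $\pi_2$. In case~(1), the hypothesis gives $\pi_1=\nu^{-2s+n-k-2}$, which is a character. The modular factors are then those of $\Par_{k,n-k}$ (via $\delta_P^{1/2}$) and of the parabolic in $\G_{n-1}$ of type $(k-1,n-k)$. Each is a power of $\nu$ on the blocks, with exponents linear in $k$ and $n$; recall that $\nu$ on $\G_m$ is $|\Nrd|$ and that the modular character involves the factor $2$ from $\dim\D$. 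One must verify that the leftover character on the $\G_k$ block is exactly $\nu^{-2s+n-k-2}$, with $\chi_s$ contributing $\nu^{2s}$ on $\G_1$ and $\nu^{-2s}$ on $\G_{k-1}$. On the $\G_{n-k}$ block, the leftover shift should turn $\chi_s$ into $\chi_{s+k/2}$ relative to the embedded $\Ha_{1,n-k-1}$.

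I would first redo this computation in the geometric lemma's open-orbit character $\omega_{s,n,k}$, as a consistency check, since Lemma~\ref{lem3} already records the same exponents for the necessary direction. The sufficient direction uses the identical modular characters. Finally, tensoring the character of $\pi_1$ with the given functional on $\pi_2$ produces the $Q$-equivariant functional, which completes the proof.
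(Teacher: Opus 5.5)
Your overall route (restrict to a closed $(\Par_{k,n-k},\Ha_{1,n-1})$-double coset, identify the restriction with a compact induction from a parabolic subgroup $Q$ of $\Ha_{1,n-1}$, and integrate a $Q$-equivariant functional over the compact quotient) is the closed-orbit argument the paper invokes through \cite[Proposition~3.2.16]{offb}. However, Step~1 attaches the wrong closed orbit to case~(1), and with the $Q$ you wrote down the argument cannot produce the required functional.

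Your $Q=\{\diag(g_1,h)\}$, with $h$ in the $(k-1,n-k)$ parabolic of $\G_{n-1}$, is $\Par_{k,n-k}\cap\Ha_{1,n-1}$, the stabilizer of the identity coset. An element $\diag\bigl(g_1,\bigl(\begin{smallmatrix} a&b\\ 0&d\end{smallmatrix}\bigr)\bigr)$ with $a\in\G_{k-1}$, $d\in\G_{n-k}$ has Levi component $(\diag(g_1,a),d)\in\G_k\times\G_{n-k}$. So $Q$ meets the $\pi_1$-block only in $\Ha_{1,k-1}$, and it meets the $\pi_2$-block in all of $\G_{n-k}$. A $Q$-equivariant functional on $\pi_1\otimes\pi_2$ therefore forces $\pi_2=\eta$ to have a character quotient of the whole group $\G_{n-k}$. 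This fails, e.g., for an infinite-dimensional irreducible $\eta$. No reindexing puts $\G_{n-k}$ inside $\Ha_{1,n-k-1}$, and in Step~3 there is no ``leftover character on the $\G_k$ block'' to compare with $\nu^{-2s+n-k-2}$. Indeed, run your Step~3 on this orbit with $\delta_{\Par}^{1/2}=\nu(g_1)^{n-k}\nu(a)^{n-k}\nu(d)^{-k}$ and $\delta_Q=\nu(a)^{2(n-k)}\nu(d)^{-2(k-1)}$. It yields $\chi_{s+\frac{k-n}{2}}$ on $\diag(g_1,a)$ and $\nu^{-2s-k+2}$ on $d$. This is closed orbit~II, i.e.\ case~(2).

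For case~(1) you need the other closed orbit, on which the $k$-dimensional subspace lies in the span of the last $n-1$ basis vectors. Take $x$ to be the permutation matrix moving the first coordinate to position $k+1$, so that
\[
x\,\diag\Bigl(g_1,\begin{pmatrix} a&b\\ c&d\end{pmatrix}\Bigr)x^{-1}=\begin{pmatrix} a&0&b\\ 0&g_1&0\\ c&0&d\end{pmatrix},\qquad a\in\G_k,\ d\in\G_{n-k-1}.
\]
This lies in $\Par_{k,n-k}$ if and only if $c=0$. Hence $Q=\Ha_{1,n-1}\cap x^{-1}\Par_{k,n-k}x$ is $\G_1$ times the $(k,n-k-1)$ parabolic of $\G_{n-1}$, with Levi image $(a,\diag(g_1,d))\in\G_k\times\Ha_{1,n-k-1}$.

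Write $\sigma=\pi_1\otimes\pi_2$, inflated to $\Par_{k,n-k}$. The functional $f\mapsto\int_{Q\backslash\Ha_{1,n-1}}\ell(f(xh))\,\chi_s(h)^{-1}\,dh$ is well defined once $\ell\circ\sigma(xqx^{-1})=\chi_s(q)\,\delta_Q(q)\,\delta_{\Par}^{-1/2}(xqx^{-1})\,\ell$ for $q\in Q$. Here $\delta_{\Par}^{1/2}(xqx^{-1})=\nu(a)^{n-k}\nu(g_1)^{-k}\nu(d)^{-k}$ and $\delta_Q(q)=\nu(a)^{2(n-k-1)}\nu(d)^{-2k}$. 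So the condition asks for $\nu^{-2s+n-k-2}$ on $a$, and for $\nu(g_1)^{2s+k}\nu(d)^{-2s-k}=\chi_{s+\frac k2}(\diag(g_1,d))$ on the $\pi_2$-factor. These are precisely the hypotheses of~(1). With this correction the rest of your argument (surjectivity of restriction to the closed orbit, Frobenius reciprocity, integration) goes through.

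Finally, prove case~(2) directly on the identity coset, using your $Q$, rather than via Lemma~\ref{cont}(2). That lemma is stated for products of irreducible representations, while $\eta$ is only assumed to lie in $\Rep(\G_k)$ and may be reducible, so the reduction is not justified as stated.
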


The open orbit condition in $(3)$ of Lemma~\ref{lem3} gives the following necessary condition.

\begin{remark}
If the open orbit\/ $\mathrm{(III)}$ contributes to the
$(\Ha_{1,n-1},\chi_s)$-distinction of $\pi_1\times\pi_2$, then the
representation
$
r_{(k-1,1)}(\pi_1)\otimes r_{(1,n-k-1)}(\pi_2),
$
admits an Jordan--Hölder constituent of the form
$
\nu^{-2s+n-k-1}\otimes\rho\otimes\widetilde{\rho}
\otimes\nu^{-(2s+k-1)}
$
for some $\rho\in\Irr(\G_1)$.

\end{remark}

\begin{remark}
In proving Conjecture~\ref{conj} for $n=3$, we are required to take $k=1$, whereas for $n=4$ we take $k=2$. Thus, by Lemma~\ref{rem1}, the $\Ha_{1,1}$-distinction of irreducible representations of $\G_2$ plays a crucial role. We therefore recall the classification of irreducible representations of $\G_2$ that are $\Ha_{1,1}$-distinguished. First, every such representation is self-dual, that is, if $\pi$ is $\Ha_{1,1}$-distinguished, then $\pi\simeq\widetilde{\pi}$; see \cite[Theorem~3.2]{raghu}. The complete classification is due to Raghuram~\cite[Theorem~1.1]{raghuram2007restriction} and Gan--Takeda~\cite[Theorem~8.6]{gan}. More explicitly, the non-supercuspidal distinguished representations consist of $\mathds{1}_2$, the representations $\chi\,\St_2$ with $\chi|_{(\D^\ast)^2}=1$, the representations $\sigma\times\widetilde{\sigma}$ for irreducible representation $\sigma$ of $\D^\ast$, and the representations generalized Steinberg and Speh attached to a self-dual irreducible representation $\sigma$ of $\D^\ast$ with $\dim(\sigma)>1$. In the latter case, exactly one of $\St_2(\sigma)$ and $\Sp_2(\sigma)$ is $\Ha_{1,1}$-distinguished: $\St_2(\sigma)$ is distinguished when $\omega_\sigma\circ\Nrd_{\D/\F}$ is non-trivial, while $\Sp_2(\sigma)$ is distinguished when $\omega_\sigma\circ\Nrd_{\D/\F}$ is trivial; see \cite[Corollary~5]{DP}. On the other hand, a self-dual supercuspidal representation of $\G_2$ is $\Ha_{1,1}$-distinguished whenever its central character is trivial.
\end{remark}

\section{Verification of Conjecture~\ref{conj} for \texorpdfstring{$n=3$}{n=3}}\label{G_3}

In this section, we classify the irreducible smooth representations of
$\G_3$ admitting a generalized linear period with respect to
$(\Ha_{1,2},\chi_s)$ and thereby verify Conjecture~\ref{conj} for
$n=3$.
         
\begin{theorem}\label{G3}
 The irreducible smooth $\G_3$-representations admitting a generalized linear period with respect to $(\Ha_{1,2},\chi_s)$ are precisely the following:

\begin{enumerate}
       \item[\upshape(1)] 
       $\nu^{-2s}\times\tau$, where $\tau\in\Irr(\G_2)$ is infinite-dimensional and $\Ha_{1,1}$-distinguished.

      \item[\upshape(2)]  $\nu_{\mathds{1}_1}^{s+1}\times\nu^{-2s-1}$ if $s\notin\{-\frac32,0\}$, and its unique irreducible subrepresentation otherwise.

      \item[\upshape(3)] 
      $\nu_{\mathds{1}_1}^{s-1}\times\nu^{-2s+1}$ if $s\notin\{0,\frac32\}$, and its unique irreducible subrepresentation otherwise.
\end{enumerate}
\end{theorem}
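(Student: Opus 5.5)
We split the argument into sufficiency and necessity. For sufficiency, use Lemma~\ref{rem1} with $n=3$. Take $k=1$ in part~(1). With $\eta=\tau\in\Irr(\G_2)$ an $\Ha_{1,1}$-distinguished representation, this needs $\chi_{s+\frac12}$-distinction of $\tau$. That is not quite the right twist, so instead we read the representations in (1) as $\eta\times\nu^{-(2s+k-2)}$ with $k=2$ after commuting. Since $\tau$ is infinite-dimensional, $\nu^{-2s}\times\tau$ is irreducible by Lemma~\ref{venk}-type considerations, apart from a finite set of exceptional $s$ which we check directly. Part~(2) of Lemma~\ref{rem1} with $k=2$ gives $\eta\times\nu^{-2s}$ distinguished whenever $\eta$ is $(\Ha_{1,1},\chi_{s-\frac12})$-distinguished.

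A cleaner route is to use Lemma~\ref{rem1}(1) with $k=2$. This asks for $\Hom(\pi_1,\nu^{-2s-1})\ne0$ and for $\pi_2$, a character of $\G_1=\Ha_{1,0}$, to be $\chi_{s+1}$-distinguished. That forces $\pi_2=\nu^{2s+2}=\nu_{\mathds 1_1}^{s+1}$ and $\pi_1=\nu^{-2s-1}$ on $\G_2$, so $\nu^{-2s-1}\times\nu_{\mathds 1_1}^{s+1}$ is distinguished. Similarly Lemma~\ref{rem1}(2) with $k=1$ gives $\nu_{\mathds 1_1}^{s-1}\times\nu^{-2s+1}$. For $k=1$ in (1) we get $\nu^{-2s}\times\eta$ with $\eta$ being $(\Ha_{1,1},\chi_{s+\frac12})$-distinguished; by the $\G_2$ analysis (self-duality combined with central character) this is only possible for suitable $\eta$. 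Family~(1) instead comes from the open orbit: one checks that $\nu^{-2s}\times\tau$ distinguished follows from $\tau\times\nu^{-2s}$ via irreducibility, or via Lemma~\ref{cont}(2).

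Next, irreducibility. By Lemma~\ref{l1}, $\nu_{\mathds 1_1}^{s+1}\times\nu^{-2s-1}$ is reducible exactly when the segments $[\frac{s+1}{1}]$ and the segment underlying $\nu^{-2s-1}$ on $\G_2$ are linked, i.e.\ $s\in\{-\frac32,0\}$. In those cases the order of the factors determines which constituent is the submodule. A Frobenius-reciprocity/Mackey argument then shows that only the unique irreducible subrepresentation inherits the period: the quotient's Jacquet modules fail every condition of Lemma~\ref{lem3}. This settles (2), and (3) follows symmetrically by Lemma~\ref{cont}(2).

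For necessity, let $\pi\in\Irr(\G_3)$ be distinguished. By Theorem~\ref{anand}(1) $\pi$ is not supercuspidal, so $\pi$ embeds in $\pi_1\times\pi_2$ with $k\in\{1,2\}$; after passing to contragredients we may fix $k=1$. Then Lemma~\ref{basic} and Lemma~\ref{lem3} apply to $\pi_1\times\pi_2$, and we run through the three orbits:
\begin{itemize}
\item Orbit I forces $\pi_1=\nu^{-2s}$ on $\G_1$ and $\pi_2$ to be $(\Ha_{1,1},\chi_{s+\frac12})$-distinguished.
\item Orbit II forces $\pi_2=\nu^{-2s+1}$ on $\G_2$ and $\pi_1=\nu^{2s-2}$.
\item Orbit III forces $r_{(1,1)}(\pi_2)$ to contain $\widetilde{\rho}\otimes\nu^{-2s}$ with $\pi_1=\rho$. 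Consequently $\pi_2$ is a subquotient of $\widetilde\rho\times\nu^{-2s}$.
\end{itemize}
In each case we list the Jordan--Hölder constituents of the resulting induced representation via Lemmas~\ref{l1} and~\ref{l2}, and test each constituent against Lemma~\ref{lem3} again, using a different embedding.

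The main obstacle is this last elimination step. Reducible inductions at special values of $s$ produce Zelevinsky constituents, such as $\mathds 1_3$, $\mathcal Q_3$ and Steinberg-type pieces, that must be shown non-distinguished (or identified with the exceptional members). Two tools handle this: Theorem~\ref{anand} removes discrete series and products of discrete series, and realizing the remaining candidates as quotients of alternative orderings of the inducing data removes the rest. Here realizing a candidate as a quotient, rather than a sub, changes which orbit conditions apply.
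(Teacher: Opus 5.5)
Your treatment of family~(1) does not work. You rightly notice that Lemma~\ref{rem1} only produces $\nu^{-2s}\times\eta$ (resp.\ $\eta\times\nu^{-2s}$) for $\eta$ that is $(\Ha_{1,1},\chi_{s+\frac12})$-distinguished (resp.\ $(\Ha_{1,1},\chi_{s-\frac12})$-distinguished). The way out you propose, that family~(1) ``comes from the open orbit'', is not available.

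\begin{itemize}
\item Lemma~\ref{lem3}(3) is only a necessary condition; nothing in the paper makes the open orbit sufficient.
\item It is not even satisfied here. The open orbit needs a constituent of $r_{(1,1)}(\tau)$ of the form $\nu^{\pm2s}\otimes\nu^{\mp2s}$, in either ordering ($k=1$ or $k=2$). For supercuspidal $\tau$ one has $r_{(1,1)}(\tau)=0$. For $\tau=\St_2(\sigma)$ or $\Sp_2(\sigma)$ with $\dim\sigma>1$, the Jacquet module consists of twists of $\sigma$. For $\chi\St_2$ or $\sigma\times\widetilde\sigma$, such a constituent occurs only for isolated values of $s$.
\end{itemize}

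So a period on $\nu^{-2s}\times\tau$, in either order and whether or not you apply Lemma~\ref{cont}(2), can only come from a closed orbit, and that imposes a twisted $\Ha_{1,1}$-condition on $\tau$. The input you cannot avoid is the identification of the infinite-dimensional $(\Ha_{1,1},\chi_{s+\frac12})$-distinguished representations of $\G_2$ with the $\Ha_{1,1}$-distinguished ones. This is how the paper proceeds in Cases~(1), (3a) and~(4). It reads off the admissible $\pi_2$ in \eqref{3.1}(I) from the $\Ha_{1,1}$-classification recalled in \S\ref{orbit}, tacitly using this identification, and concludes via closed orbit~(I).

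The same gap affects your necessity argument. When orbit~I occurs, you must know which $\pi_2$ are $(\Ha_{1,1},\chi_{s+\frac12})$-distinguished in order to land in family~(1), or at $\mathds{1}_2$ when $s=-\frac12$. Your remark that this is ``only possible for suitable $\eta$'' would, taken literally, exclude supercuspidal $\tau$ from family~(1).

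There are two further problems.

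\begin{itemize}
\item In the necessity direction you say that $\pi$ \emph{embeds} in $\pi_1\times\pi_2$, and then apply Lemmas~\ref{basic} and~\ref{lem3} to $\pi_1\times\pi_2$. This runs the wrong way. An $(\Ha_{1,2},\chi_s)$-equivariant form on a subrepresentation need not extend, and Lemma~\ref{basic} passes from a representation to one of its constituents, not back. You need $\pi$ as a \emph{quotient}, so that the form pulls back along the surjection as in the paper; alternatively, pass to $\widetilde\pi$ and $\chi_{-s}$.
\item The elimination step, which is the bulk of the paper's proof, is only described. This covers the special values $s\in\{0,\pm\frac12,\pm1,\pm\frac32\}$ in Cases~(2b)--(4c), handled with the M\oe glin--Waldspurger algorithm, Theorem~\ref{anand} and Lemma~\ref{cont}. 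Also, ``the quotient's Jacquet modules fail every condition of Lemma~\ref{lem3}'' is not an argument, since Lemma~\ref{lem3} concerns induced representations. For family~(2) at $s=-\frac32$, the unwanted constituent is the nontrivial character $\nu$ of $\G_3$, which is excluded by Lemma~\ref{cont}(1). At $s=0$, one must realize $\mathcal S([1]_{(\mathds{1}_1)},[-1,0]_{(\mathds{1}_1)})$, via its contragredient and the M\oe glin--Waldspurger algorithm, as a quotient of an induced representation violating \eqref{3.1}.
\end{itemize}

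On the positive side, your use of Lemma~\ref{rem1}(1) with $k=2$ is correct: it makes $\nu^{-2s-1}\times\nu_{\mathds{1}_1}^{s+1}$ distinguished for every $s$. This is a more direct route to family~(2) than the paper's Case~(2c), which goes through $\nu_{\mathds{1}_1}^{-s}\times(\nu_{\mathds{1}_1}^{s+1}\times\nu_{\mathds{1}_1}^{-s-1})$ and treats $s=-1,-\frac12$ separately.
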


\begin{proof}

\noindent
By Theorem~\ref{anand}, no irreducible supercuspidal representation of $\G_3$ admits a generalized linear period with respect to $(\Ha_{1,2},\chi_s)$. Hence, every $\pi\in\Irr(\G_3)$ admitting a generalized linear period with respect to $(\Ha_{1,2},\chi_s)$ is an irreducible quotient of either $\pi_1\times\pi_2$ or $\pi_2\times\pi_1$, where $\pi_i\in\Irr(\G_i)$. If $\pi$ is a quotient of $\pi_2\times\pi_1$, then $\widetilde{\pi}$ is a quotient of $\widetilde{\pi}_1\times\widetilde{\pi}_2$ by the compatibility of contragredient with normalized parabolic induction. Since $\chi_s^{-1}=\chi_{-s}$, $\widetilde{\pi}$ admits a generalized linear period with respect to $(\Ha_{1,2},\chi_{-s})$. Thus, after replacing $(\pi,s)$ by $(\widetilde{\pi},-s)$, it suffices to consider irreducible quotients of $\pi_1\times\pi_2$.
Let $\pi$ be such a quotient. A nonzero $(\Ha_{1,2},\chi_s)$-equivariant linear form on $\pi$ pulls back through the canonical surjection $\pi_1\times\pi_2\twoheadrightarrow\pi$ to a nonzero such form on $\pi_1\times\pi_2$. Hence, $\pi_1\times\pi_2$ admits a generalized linear period with respect to $(\Ha_{1,2},\chi_s)$. Applying Lemma~\ref{lem3} with $n=3$ and $k=1$, we obtain the following necessary conditions on $\pi_1\times\pi_2$ to admit a generalized linear period with respect to $(\Ha_{1,2},\chi_s)$:

  \begin{equation}\label{3.1} \left. 
 \begin{array}{ll} \mathrm{(I)} & \pi_1=\nu_{\mathds{1}_1}^{-s} \quad\text{and}\quad \pi_2\text{ is } (\Ha_{1,1},\chi_{s+\frac12})\text{-distinguished}. \\[2mm] \mathrm{(II)} & 
 \pi_1=\nu_{\mathds{1}_1}^{s-1} \quad\text{and}\quad  \pi_2=\nu^{-2s+1}. \\[2mm] \mathrm{(III)} &
 \Hom_{\Delta(\G_1)\times\G_1} \left( \pi_1\otimes r_{(1,1)}(\pi_2), \mathds{1}_{\Delta(\G_1)}\otimes\nu^{-2s} \right) \neq0. \end{array} \right\} \end{equation}
The conditions in \eqref{3.1} determine the possible inducing data
$(\pi_1,\pi_2)$. For the subsequent case-by-case analysis, we consider all 
Jordan--Hölder constituents of the resulting representations $\pi_1\times\pi_2$, rather
than only their irreducible quotients. This allows us to account for the
reversed ordering simultaneously without repeating the analysis. We divide the
analysis into four cases according to the form of $\pi_2\in\Irr(\G_2)$.

\noindent\textbf{Case (1).}
Suppose that $\pi_2$ is supercuspidal, a Speh representation, or a
generalized Steinberg representation. It follows from
\eqref{3.1} that $\pi$ is one of
\[
\nu_{\mathds{1}_1}^{-s}\times\tau,\qquad
\nu_{\mathds{1}_1}^{-s}\times\Sp_2(\sigma),\qquad
\nu_{\mathds{1}_1}^{-s}\times\St_2(\sigma'),
\]
where $\tau\in\mathcal{C}(\G_2)$ is self-dual with trivial central
character, while $\sigma,\sigma'\in\Irr(\G_1)$ are of dimension greater
than one and have trivial and nontrivial central characters,
respectively. By Lemma~\ref{l1}, each of these induced representations
are irreducible and admits a generalized linear period with respect to
$(\Ha_{1,2},\chi_s)$ arising from the closed orbit 
\eqref{3.1}\textup{(I)}.

\smallskip
\noindent\textbf{Case (2).}
Suppose that $\pi_2$ is a character. It follows from \eqref{3.1} that
the relevant induced representation is one of the following:
\[
\nu_{\mathds{1}_1}^{-s}\times\mathds{1}_2
\quad\text{with }s=-\frac12,\qquad
\nu_{\mathds{1}_1}^{\,s-1}\times\nu^{-(2s-1)},\qquad
\nu_{\mathds{1}_1}^{\,s+1}\times\nu^{-(2s+1)}.
\]

We consider these three possibilities separately.

\medskip

\noindent\textbf{(2a)}
Let $I=\nu_{\mathds{1}_1}^{-s}\times\mathds{1}_2$ with $s=-\frac12$. 
By Lemma~\ref{l1}, $I$ is irreducible and admits a generalized linear
period with respect to $(\Ha_{1,2},\chi_s)$ arising from the closed
orbit~\eqref{3.1}\textup{(I)}.

\medskip

\noindent\textbf{(2b)}
Let $I=\nu_{\mathds{1}_1}^{\,s-1}\times\nu^{-(2s-1)}$.
If $s\notin\{0,\frac32\}$, then $I$ is irreducible and admits a
generalized linear period with respect to $(\Ha_{1,2},\chi_s)$ arising
from the closed orbit~\eqref{3.1}\textup{(II)}. We next consider the cases $s\in\left\{0,\frac{3}{2}\right\}$.

\begin{enumerate}
\item[\upshape(i)]
Suppose that $s=0$. Then $I$ has length two, with unique irreducible
subrepresentation $\theta_1=\nu^{-2s}$
and unique irreducible quotient $\theta_2
=
\mathcal{S}
\bigl(
[-s-1]_{(\mathds{1}_1)},
[-s,-s+1]_{(\mathds{1}_1)}
\bigr)$. 
Since $\theta_1$ is the trivial representation, it admits a generalized linear period with respect to $(\Ha_{1,2},\chi_s)$ by Lemma~\ref{cont}. The
M\oe glin--Waldspurger algorithm\cite[\S 1]{BR07} gives
$$\theta_2
\simeq
\mathcal{Q}
\bigl(
[-s+1]_{(\mathds{1}_1)},
[-s-1,-s]_{(\mathds{1}_1)}
\bigr),$$
which is the unique irreducible quotient of
$\nu_{\mathds{1}_1}^{-s+1}
\times
\mathcal{Q}
\bigl([-s-1,-s]_{(\mathds{1}_1)}\bigr)$.
The latter induced representation does not occur among the
possibilities in \eqref{3.1} and therefore does not admit a generalized
linear period with respect to $(\Ha_{1,2},\chi_s)$. Consequently,
neither does $\theta_2$.

\item[\upshape(ii)]
Suppose that $s=\frac32$. Then $I$ has length two, with unique
irreducible quotient
$\theta_1=\nu^{-2s+2}$
and unique irreducible subrepresentation
$\theta_2 =
\mathcal{S}
\bigl(
[-s+2]_{(\mathds{1}_1)},
[-s,-s+1]_{(\mathds{1}_1)}
\bigr)$.
By Lemma~\ref{cont}, $\theta_1$ does not admit a generalized linear
period with respect to $(\Ha_{1,2},\chi_s)$. On the other hand, $I$
admits such a period arising from the closed orbit~\eqref{3.1}\textup{(II)}.
It therefore follows from Lemma~\ref{basic} that $\theta_2$ admits a
generalized linear period with respect to $(\Ha_{1,2},\chi_s)$.
\end{enumerate}

\medskip

\noindent\textbf{(2c)}
Let $I=\nu_{\mathds{1}_1}^{\,s+1}\times\nu^{-(2s+1)}$.
Suppose first that $s\notin\{-\frac32,0\}$. By Lemma~\ref{l1}, $I$ is
irreducible. If, in addition, $s\notin\{-1,-\frac12\}$, then $I$ is
the unique irreducible quotient of
$ I'=
\nu_{\mathds{1}_1}^{-s}
\times
(\nu_{\mathds{1}_1}^{\,s+1}
\times
\nu_{\mathds{1}_1}^{-s-1}).
$
By Lemma~\ref{l1}, $I'$ has length two, with $I$ and
$\nu_{\mathds{1}_1}^{\,s+1}
\times
\nu^{-2s-1}\St_2$
as its Jordan--Hölder constituents. The representation
$I'$ admits a generalized linear period with respect to
$(\Ha_{1,2},\chi_s)$ arising from the closed orbit~\eqref{3.1}\textup{(I)}, whereas
$\nu_{\mathds{1}_1}^{\,s+1}
\times
\nu^{-2s-1}\St_2$ does not admit such a period by
\eqref{3.1}. Lemma~\ref{basic} therefore implies that $I$ admits a
generalized linear period with respect to $(\Ha_{1,2},\chi_s)$.
It remains to examine the values
$s\in\left\{-\frac32,-1,-\frac12,0\right\}$.

\begin{enumerate}
\item[\upshape(i)]
Suppose that $s=-\frac32$. Then $I$ has length two, with unique
irreducible quotient
$\theta_1=\nu^{-2s-2}$
and unique irreducible subrepresentation
$\theta_2
=
\mathcal{S}
\bigl(
[-s-2]_{(\mathds{1}_1)},
[-s-1,-s]_{(\mathds{1}_1)}
\bigr)$.
By Lemma~\ref{cont}, $\theta_1$ does not admit a generalized linear
period with respect to $(\Ha_{1,2},\chi_s)$. Moreover, by
M\oe glin--Waldspurger algorithm\cite[\S 1]{BR07}, we have
$$\theta_2
\simeq
\mathcal{Q}
\bigl(
[-s]_{(\mathds{1}_1)},
[-s-2,-s-1]_{(\mathds{1}_1)}
\bigr).$$
By Lemma~\ref{l1}, the representation
$\nu_{\mathds{1}_1}^{-s}
\times
\mathcal{Q}
\bigl([-s-2,-s-1]_{(\mathds{1}_1)}\bigr)$
has  Jordan--Hölder constituents
$\theta_2$ and 
$\mathcal{Q}\bigl([-s-2,-s]_{(\mathds{1}_1)}\bigr)$.
By Theorem~\ref{anand}, the representation $\mathcal{Q}\bigl([-s-2,-s]_{(\mathds{1}_1)}\bigr)$ does not admit a generalized linear
period with respect to $(\Ha_{1,2},\chi_s)$, whereas the induced
representation above admits such a period arising from the closed
orbit~\eqref{3.1}\textup{(I)}. Hence,Lemma~\ref{basic} implies that $\theta_2$
admits a generalized linear period with respect to
$(\Ha_{1,2},\chi_s)$.

\item[\upshape(ii)]
Suppose that $s=-1$. Then $I$ is irreducible by Lemma~\ref{l1}, and
$\widetilde{I}
\simeq
\nu_{\mathds{1}_1}^{-s-1}\times\nu^{2s+1}$.
It follows from \eqref{3.1} that $\widetilde{I}$ admits a generalized
linear period with respect to $(\Ha_{1,2},\chi_{-s})$ arising from the
closed orbit~\eqref{3.1}\textup{(II)}. Consequently, by Lemma \ref{cont}, $I$ admits a generalized
linear period with respect to $(\Ha_{1,2},\chi_s)$.

\item[\upshape(iii)]
Suppose that $s=-\frac12$. By Lemma~\ref{l1}, $I$ is irreducible and
admits a generalized linear period with respect to
$(\Ha_{1,2},\chi_s)$ arising from the closed orbit~\eqref{3.1}\textup{(I)}.

\item[\upshape(iv)]
Suppose that $s=0$. By Lemma~\ref{l1}, $I$ has length two, with unique
irreducible subrepresentation
$\theta_1=\nu^{-2s}$
and unique irreducible quotient
$\theta_2
=
\mathcal{S}
\bigl(
[-s+1]_{(\mathds{1}_1)},
[-s-1,-s]_{(\mathds{1}_1)}
\bigr)$.
Since $\theta_1$ is the trivial representation, it admits a generalized
linear period with respect to $(\Ha_{1,2},\chi_s)$. By
Case~\textup{(2b)}, the representation $\widetilde{\theta}_2$ does not admit a generalized
linear period with respect to $(\Ha_{1,2},\chi_{-s})$. Hence,by Lemma \ref{cont},
$\theta_2$ also does not admit a generalized linear period with respect to
$(\Ha_{1,2},\chi_s)$.
\end{enumerate}

\smallskip
\noindent\textbf{Case (3).}
Suppose that $\pi_2$ is a twist of the Steinberg representation. It
follows from \eqref{3.1} that the relevant induced representations are
$$\nu_{\mathds{1}_1}^{-s}\times\chi\St_2
~\text{with}~ \chi^2=\mathds{1}_1,
~\text{or}~ \nu_{\mathds{1}_1}^{s-1}
\times\nu^{-2s+1}\St_2.$$
 We consider these two possibilities separately.

\medskip

\noindent\textbf{(3a)}
Let
$I=\nu_{\mathds{1}_1}^{-s}\times\chi\St_2$,
with $\chi^2=\mathds{1}_1$.
If $\chi\neq\mathds{1}_1$, or if $\chi=\mathds{1}_1$ and
$s\neq\pm\frac32$, then Lemma~\ref{l1} shows that $I$ is irreducible
and admits a generalized linear period with respect to $(\Ha_{1,2},\chi_s)$ arising from the closed orbit~\eqref{3.1}\textup{(I)}.
Suppose now that $\chi=\mathds{1}_1$ and $s=\frac32$. Then $I$ has the following
Jordan--Hölder constituents:$$\nu^{-2s}\mathcal{S}_3~\text{and}~\nu^{-2s}\mathcal{Q}_3.$$
By Theorem~\ref{anand}, the representation
$\nu^{-2s}\mathcal{S}_3$ does not admit a generalized linear period
with respect to $(\Ha_{1,2},\chi_s)$, while the question of whether the representation $\nu^{-2s}\mathcal{Q}_3$ admit a generalized linear period with respect to $(\Ha_{1,2},\chi_s)$ has already been discussed in Case~\textup{(2)}.
The remaining value $\chi=\mathds{1}_1$ and $s=-\frac32$ yields no new
Jordan--Hölder constituents, since it is recovered from the preceding case by
taking the contragredient and replacing $s$ by $-s$.

\medskip

\noindent\textbf{(3b)}
Let $I=\nu_{\mathds{1}_1}^{s-1}\times\nu^{-2s+1}\St_2$.
If $s=\frac12$, then $I$ is irreducible and admits a generalized linear
period with respect to $(\Ha_{1,2},\chi_s)$ arising from the closed
orbit~\eqref{3.1}\textup{(I)}. Assume henceforth that $s\neq\frac12$.
If $s\notin\{0,\frac32\}$, then $I$ is irreducible by
Lemma~\ref{l1}, and
$\widetilde{I}
\simeq
\nu_{\mathds{1}_1}^{-s+1}\times\nu^{2s-1}\St_2$.
It follows from \eqref{3.1} that $\widetilde{I}$ does not admit a
generalized linear period with respect to
$(\Ha_{1,2},\chi_{-s})$. Consequently, $I$ does not admit a generalized
linear period with respect to $(\Ha_{1,2},\chi_s)$ by Lemma \ref{cont}. It remains to
consider $s=0$ and $s=\frac32$.

\begin{enumerate}
\item[\upshape(i)]
Suppose that $s=0$. Then $I$ has the following Jordan--Hölder constituents:
$$
\theta_1=\nu^{-2s-2}\mathcal{Q}_3
~\text{and}~
\theta_2=\nu^{-2s-2}\mathcal{S}_3.
$$
The question of whether $\theta_1$ admits a generalized linear period with respect to $(\Ha_{1,2},\chi_s)$ has already been addressed in Case~\textup{(2c)}, whereas Theorem~\ref{anand} shows that $\theta_2$
does not admit a generalized linear period with respect to
$(\Ha_{1,2},\chi_s)$.

\item[\upshape(ii)]
Suppose that $s=\frac32$. Then $I$ has the following Jordan--Hölder constituents:
$\theta_1 =
\mathcal{Q}\bigl([-s,-s+2]_{(\mathds{1}_1)}\bigr)$
and
$\theta_2 =
\mathcal{Q}
\bigl(
[-s+2]_{(\mathds{1}_1)},
[-s,-s+1]_{(\mathds{1}_1)}
\bigr)$.
By Theorem~\ref{anand}, $\theta_1$ does not admit a generalized linear
period with respect to $(\Ha_{1,2},\chi_s)$. Moreover, by M\oe glin--Waldspurger algorithm \cite[\S 1]{BR07}, we have
$$
\theta_2
\simeq
\mathcal{S}
\bigl(
[-s]_{(\mathds{1}_1)},
[-s+1,-s+2]_{(\mathds{1}_1)}
\bigr),
$$
and this representation is the unique irreducible quotient of
$\nu_{\mathds{1}_1}^{-s}
\times
\mathcal{S}\bigl([-s+1,-s+2]_{(\mathds{1}_1)}\bigr)$.
Since the latter induced representation does not admit a generalized linear period with respect to $(\Ha_{1,2},\chi_s)$, Lemma~\ref{basic} implies that neither does its irreducible quotient $\theta_2$.
\end{enumerate}
    
\smallskip
\noindent\textbf{Case~(4).}
Suppose that $\pi_2$ is an irreducible principal series representation.
It follows from \eqref{3.1} that the relevant induced representations are
of the form:
\[
\begin{aligned}
&\nu_{\mathds{1}_1}^{-s}\times\sigma\times\widetilde{\sigma}
\qquad\text{where }\sigma\in\Irr(\G_1)\text{ with }
\sigma\not\simeq\nu_{\sigma}^{\pm1}\widetilde{\sigma},\\
&\sigma\times\nu_{\mathds{1}_1}^{-s}\times\widetilde{\sigma}
\quad\text{or}\quad
\sigma\times\widetilde{\sigma}\times\nu_{\mathds{1}_1}^{-s}
\qquad\text{where }\sigma\in\Irr(\G_1)\text{ with }
\sigma\not\simeq\nu_{\mathds{1}_1}^{s\pm1}.
\end{aligned}
\]
according to the closed orbit(I) and open orbit(III) respectively. Without loss of generality, it is sufficient to study the Jordan--Hölder constituents of $I=\nu_{\mathds{1}_1}^{-s}\times\sigma\times\widetilde{\sigma}$ with $\sigma\in\Irr(\G_1)$. If
$\sigma\notin
\left\{\nu_{\mathds{1}_1}^{-s\pm1},
\nu_{\mathds{1}_1}^{ s\pm1},
\,\nu_{\sigma}^{\pm 1}\widetilde{\sigma}
\right\}$,
then $I$ is irreducible and admits a generalized
linear period with respect to $(\Ha_{1,2},\chi_s)$ arising from the
closed orbit~\eqref{3.1}\textup{(I)}. We now examine the remaining possibilities.

\medskip

\noindent\textbf{(4a)}
Suppose that
$\sigma\simeq\nu_{\sigma}^{\pm 1}\widetilde{\sigma}$ with $\dim(\sigma)>1$.
Then $I$ has the following Jordan--Hölder constituents:
$\theta_1=\nu_{\mathds{1}_1}^{-s}\times\Sp_2(\sigma)$ and
$\theta_2=\nu_{\mathds{1}_1}^{-s}\times\St_2(\sigma)$. 
It follows from \eqref{3.1} that $\theta_1$ admits a generalized linear
period with respect to $(\Ha_{1,2},\chi_s)$ when the central character
$\omega_\sigma$ is trivial, whereas $\theta_2$ admits such a period
when $\omega_\sigma$ is nontrivial.

\medskip

\noindent\textbf{(4b)}
Suppose that
$\sigma=\nu_{\mathds{1}_1}^{-s+1}$ and 
$s\notin\left\{0,\frac12,1,\frac32\right\}$.
Then
$I=
\nu_{\mathds{1}_1}^{-s}
\times
\nu_{\mathds{1}_1}^{-s+1}
\times
\nu_{\mathds{1}_1}^{s-1}$
has the Jordan--Hölder constituents
$
\theta_1
=
\nu_{\mathds{1}_1}^{s-1}\times\nu^{-(2s-1)}
~\text{and}~
\theta_2
=
\nu_{\mathds{1}_1}^{s-1}
\times
\nu^{-2s+1}\St_2.
$
The question of whether the representations  $\theta_1$ and $\theta_2$ admit a generalized linear period with respect to \((\Ha_{1,2},\chi_s)\)  has already been
determined in Cases~\textup{(2b)} and~\textup{(3b)}, respectively. It
remains to examine
$s\in\left\{0,\frac12,1,\frac32\right\}$.

\begin{enumerate}
\item[\upshape(i)]
Suppose that $s=0$. Then $I$ has the following Jordan--Hölder constituents:
\[
\begin{aligned}
\theta_1&=
\mathcal{S}\bigl(
[-s]_{(\mathds{1}_1)},
[-s+1]_{(\mathds{1}_1)},
[-s-1]_{(\mathds{1}_1)}
\bigr),\\
\theta_2&=
\mathcal{S}\bigl(
[-s-1,-s]_{(\mathds{1}_1)},
[-s+1]_{(\mathds{1}_1)}
\bigr),\\
\theta_3&=
\mathcal{S}\bigl(
[-s-1]_{(\mathds{1}_1)},
[-s,-s+1]_{(\mathds{1}_1)}
\bigr),\\
\theta_4&=
\mathcal{S}\bigl([-s-1,-s+1]_{(\mathds{1}_1)}\bigr).
\end{aligned}
\]
   By Theorem~\ref{anand}, $\theta_1$ does not admit a generalized linear
period with respect to $(\Ha_{1,2},\chi_s)$, while the  question of whether the representations $\theta_2$, $\theta_3$, and $\theta_4$ admit a generalized linear period with respect to \((\Ha_{1,2},\chi_s)\) has already been determined in
Cases~\textup{(2b)} and~\textup{(2c)}.

\item[\upshape(ii)]
Suppose that $s=\frac12$. Then $I$ has the Jordan--Hölder constituents
$
\theta_1
=
\nu_{\mathds{1}_1}^{-s}\times\nu^{-2s+1}
~\text{and}~
\theta_2
=
\nu_{\mathds{1}_1}^{-s}\times\nu^{-2s+1}\St_2.
$
The question of whether they admit a generalized linear period with respect to $(\Ha_{1,2},\chi_s)$ has already been addressed in Cases~\textup{(2b)} and~\textup{(3b)}, respectively.

\item[\upshape(iii)]
Suppose that $s=1$. Then $I$ has the Jordan--Hölder constituents
$
\theta_1
=
\nu_{\mathds{1}_1}^{-s+1}\times\nu^{-2s+1}
~\text{and}~
\theta_2
=
\nu_{\mathds{1}_1}^{-s+1}\times\nu^{-2s+1}\St_2.
$
By Case~\textup{(2b)}, $\theta_1$ admits a generalized linear
period with respect to $(\Ha_{1,2},\chi_s)$, whereas
Case~\textup{(3b)} shows that $\theta_2$ does not.

\item[\upshape(iv)]
Suppose that $s=\frac32$. Then $I$ has the following Jordan--Hölder constituents:
\[
\begin{aligned}
\theta_1&=
\mathcal{S}\bigl(
[-s]_{(\mathds{1}_1)},
[-s+1]_{(\mathds{1}_1)},
[-s+2]_{(\mathds{1}_1)}
\bigr),\\
\theta_2&=
\mathcal{S}\bigl(
[-s,-s+1]_{(\mathds{1}_1)},
[-s+2]_{(\mathds{1}_1)}
\bigr),\\
\theta_3&=
\mathcal{S}\bigl(
[-s]_{(\mathds{1}_1)},
[-s+1,-s+2]_{(\mathds{1}_1)}
\bigr),\\
\theta_4&=
\mathcal{S}\bigl([-s,-s+2]_{(\mathds{1}_1)}\bigr).
\end{aligned}
\]
By Theorem~\ref{anand}, the representation $\theta_1$ does not admit a generalized
linear period with respect to $(\Ha_{1,2},\chi_s)$. The representation $\theta_2$ admit a generalized linear period with respect to $(\Ha_{1,2},\chi_s)$ by
Case~\textup{(2b)(ii)}, whereas Case~\textup{(3b)(ii)} shows that
$\theta_3$ does not admit a generalized
linear period with respect to $(\Ha_{1,2},\chi_s)$. Finally, $\theta_4$ is a nontrivial character of $\G_3$. Hence, it does not admit a generalized linear period with
respect to $(\Ha_{1,2},\chi_s)$ by Lemma~\ref{cont}.

\end{enumerate}

\medskip

\noindent\textbf{(4c)} Suppose that $\sigma\in
\left\{\nu_{\mathds{1}_1}^{-s-1},
\nu_{\mathds{1}_1}^{ s\pm1},
\,\nu_{\sigma}^{\pm 1}\widetilde{\sigma}~|~\Dim(\sigma)=1
\right\}.$
These possibilities
are treated in the same manner and do not yield additional representations
admitting a generalized linear period with respect to
$(\Ha_{1,2},\chi_s)$, up to taking the contragredient and replacing $s$
by $-s$. 

Combining Cases~\textup{(1)}--\textup{(4)}, we obtain exactly
the three families stated in the theorem. This proves the result and,
in particular, verifies Conjecture~\ref{conj} for $n=3$.
\end{proof}

\section{Proof of Conjecture~\ref{conj} for \texorpdfstring{$n=4$}{n=4}}\label{G_4}

In this section, we classify the irreducible smooth representations of $\G_4$
admitting a generalized linear period with respect to $(\Ha_{1,3},\chi_s)$.
This establishes Conjecture~\ref{conj} for $n=4$.

\begin{theorem}\label{G4}
The irreducible smooth $\G_4$-representations admitting a generalized linear period with respect to $(\Ha_{1,3},\chi_s)$ are precisely the following:
\begin{enumerate}
\item[\upshape(1)]
$\nu^{-2s}\times\tau$, where $\tau\in\Irr(\G_2)$ is infinite-dimensional and
$\Ha_{1,1}$-distinguished;

\item[\upshape(2)]
$\nu_{\mathds{1}_1}^{s+\frac32}\times\nu^{-2s-1}$ if
$s\notin\{-2,0\}$, and its unique irreducible subrepresentation otherwise;

\item[\upshape(3)]
$\nu_{\mathds{1}_1}^{s-\frac32}\times\nu^{-2s+1}$ if
$s\notin\{0,2\}$, and its unique irreducible subrepresentation otherwise.
\end{enumerate}
\end{theorem}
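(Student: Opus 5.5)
We follow the strategy of the proof of Theorem~\ref{G3}. By Theorem~\ref{anand}(1), no supercuspidal representation of $\G_4$ is $(\Ha_{1,3},\chi_s)$-distinguished. Hence any distinguished $\pi\in\Irr(\G_4)$ is an irreducible quotient of some $\pi_1\times\pi_2$ with $\pi_i\in\Irr(\G_{k_i})$ and $k_1+k_2=4$.

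Our first step is to reduce to the composition $k=2$. If $k\in\{1,3\}$, we refine $\pi_1$ or $\pi_2$ through its own supercuspidal support and regroup to obtain a quotient of some $\pi_1'\times\pi_2'$ with $\pi_1',\pi_2'\in\Irr(\G_2)$. The exception is when the support has a single block, a $\G_3$-supercuspidal paired with a $\G_1$-representation. For that configuration we apply Lemma~\ref{lem3} with $k=1$ or $k=3$ directly. There the Jacquet module of the $\G_3$-supercuspidal vanishes, which kills the open orbit, and Theorem~\ref{anand}(1) kills the closed orbits. We also use the symmetry $(\pi,s)\mapsto(\widetilde\pi,-s)$ from Lemma~\ref{cont}, since $\chi_s^{-1}=\chi_{-s}$; this halves the ordering issues.

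With $k=2$, Lemma~\ref{lem3} gives three alternatives:
\begin{itemize}
\item[(I)] $\pi_1=\nu^{-2s}$ and $\pi_2$ is $(\Ha_{1,1},\chi_{s+1})$-distinguished;
\item[(II)] $\pi_1$ is $(\Ha_{1,1},\chi_{s-1})$-distinguished and $\pi_2=\nu^{-2s}$;
\item[(III)] $r_{(1,1)}(\pi_1)\otimes r_{(1,1)}(\pi_2)$ has a constituent $\nu^{-2s+1}\otimes\rho\otimes\widetilde\rho\otimes\nu^{-2s-1}$.
\end{itemize}
By Lemma~\ref{cont}(1), distinction of a $\G_2$-character under $(\Ha_{1,1},\chi_t)$ forces $t=0$. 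Combined with the $\G_2$ classification recalled earlier, this makes (I) and (II) yield exactly family (1), namely $\nu^{-2s}\times\tau$ and $\tau\times\nu^{-2s}$ with $\tau$ distinguished. These two products have the same constituents and are irreducible by Lemma~\ref{l1} unless $\tau$ is a twisted Steinberg or $\mathds 1_2$. The degenerate choices $\tau=\mathds 1_2$ with $s=\pm1$ give the inducing data $\nu^{-2s}\times\mathds 1_2$. In Zelevinsky form these become $\nu_{\mathds 1_1}^{s\pm3/2}\times\nu^{-2s\mp1}$-type representations, which is where families (2) and (3) should emerge.

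Condition (III) forces each $\pi_i$ to be a subquotient of a principal series of $\G_2$, with cuspidal support $\{\nu_{\mathds 1_1}^{-s+1/2},\rho\}$ for $\pi_1$ and $\{\widetilde\rho,\nu_{\mathds 1_1}^{-s-1/2}\}$ for $\pi_2$. So, in parallel with Case~(4) of Theorem~\ref{G3}, we must analyze the full induced representation
\[
I=\nu_{\mathds 1_1}^{-s+\frac12}\times\rho\times\widetilde\rho\times\nu_{\mathds 1_1}^{-s-\frac12}
\]
together with its reorderings. We split into cases according to $\rho$. For generic $\rho$, $I$ is irreducible and distinguished, and it is a member of family (1) with $\tau=\rho\times\widetilde\rho$. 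When $\dim\rho>1$ and $\rho\simeq\nu_\rho^{\pm1}\widetilde\rho$, the constituents are $\nu^{-2s}\times\Sp_2(\rho)$ and $\nu^{-2s}\times\St_2(\rho)$. The remaining case is $\rho$ a character $\nu_{\mathds 1_1}^{a}$ lying on the line of $\nu_{\mathds 1_1}^{\pm s\pm1/2}$ or at $\nu_\rho^{\pm1}\widetilde\rho$.

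The main obstacle is this last, resonant case. There $I$ has four characters all on one $\nu_{\mathds 1_1}$-line, so it may have many Jordan--Hölder constituents, indexed by multisegments via Lemma~\ref{l2}. For each special $s$, in particular $s\in\{0,\pm\frac12,\pm1,\pm\frac32,\pm2\}$, we enumerate these constituents. For each one we then do the following:
\begin{enumerate}
\item Exclude it by Theorem~\ref{anand} if it is a discrete series or an irreducible product of two discrete series.
\item Exclude it by Lemma~\ref{cont}(1) if it is a nontrivial character.
\item Otherwise, compute its Langlands form using the M\oe glin--Waldspurger algorithm. We then realize it as the unique quotient of a standard module and check that module against (I)--(III). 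If no orbit contributes, the constituent is excluded.
\item To prove distinction, exhibit a distinguished induced representation (via Lemma~\ref{rem1}) in which all other constituents are already excluded, and conclude by Lemma~\ref{basic}.
\end{enumerate}

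At the exceptional values $s\in\{0,\pm2\}$, Lemma~\ref{venk} and Remark~\ref{rem:exceptional-symmetry} identify the surviving subrepresentations. These are $\mathds 1_4$ at $s=0$, and $\nu^{-2s}\mathcal Q_4$ or $\nu^{-2s}\widetilde{\mathcal Q}_4$ at $s=\pm2$. Collecting all surviving constituents then gives exactly families (1)--(3).
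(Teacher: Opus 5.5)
Your skeleton matches the paper's. You reduce to inducing data in $\Irr(\G_2)\times\Irr(\G_2)$, apply Lemma~\ref{lem3} with $k=2$, and sort Jordan--H\"older constituents using Theorem~\ref{anand}, Lemma~\ref{cont}, standard modules and Lemma~\ref{basic}. Your explicit handling of supercuspidal support of type $(3,1)$, via Lemma~\ref{lem3} with $k\in\{1,3\}$, is more careful than the paper's appeal to \cite{hariom}. However, two irreducibility claims that your case split rests on are false, and they hide exactly the constituents where the theorem has content.

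\textbf{First gap.} $I=\nu_{\mathds{1}_1}^{-s+\frac12}\times\rho\times\widetilde\rho\times\nu_{\mathds{1}_1}^{-s-\frac12}$ is never irreducible. Its outer factors differ by $\nu_{\mathds{1}_1}$, so $[-s-\frac12]_{(\mathds{1}_1)}$ and $[-s+\frac12]_{(\mathds{1}_1)}$ are linked for every $s$ and every $\rho$. For $\rho$ in general position, $I$ has two constituents:
\begin{itemize}
\item $\nu^{-2s}\times\rho\times\widetilde\rho$, which lies in family (1) and is distinguished via the closed orbit (I);
\item $\nu^{-2s}\St_2\times\rho\times\widetilde\rho$, which must be shown \emph{not} to be distinguished.
\end{itemize}
The paper does the latter in Case~(8a) by checking that it meets none of (I)--(III); your ``generic $\rho$'' step drops it. Likewise, when $\dim\rho>1$ and $\rho\times\widetilde\rho$ reduces, $I$ has four constituents, not two. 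The two carrying the factor $\nu^{-2s}\St_2$ are never addressed.

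\textbf{Second gap.} $\nu^{-2s}\times\tau$ can be reducible even when $\tau$ is neither a twisted Steinberg nor $\mathds{1}_2$. Take $\tau=\nu_{\mathds{1}_1}^{x}\times\nu_{\mathds{1}_1}^{-x}$ with $x\in\{\pm(s+\frac32),\pm(s-\frac32)\}$. This $\tau$ is irreducible and $\Ha_{1,1}$-distinguished for all but finitely many $s$. Yet $[x]_{(\mathds{1}_1)}$ or $[-x]_{(\mathds{1}_1)}$ extends the segment $[-s-\frac12,-s+\frac12]_{(\mathds{1}_1)}$ of $\nu^{-2s}$, so $\nu^{-2s}\times\tau$ is reducible for every $s$.

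This, and not $\tau=\mathds{1}_2$ at $s=\pm1$, is where families (2) and (3) arise for generic $s$. For $x=s+\frac32$ the constituents are:
\begin{itemize}
\item $\nu^{-2s-1}\times\nu_{\mathds{1}_1}^{s+\frac32}$, which is the family (2) representation;
\item $\nu_{\mathds{1}_1}^{s+\frac32}\times\mathcal{S}\bigl([-s-\frac32]_{(\mathds{1}_1)},[-s-\frac12,-s+\frac12]_{(\mathds{1}_1)}\bigr)$, which must be excluded (the paper's Case~(7a)).
\end{itemize}
You file (I)/(II) entirely under family (1), and you announce the resonant enumeration only for special values of $s$. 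As written, your plan therefore neither proves that families (2) and (3) are distinguished for generic $s$ nor excludes their companion constituents.

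The repair is direct. Lemma~\ref{rem1} with $k=3$ and with $k=1$ shows that $\nu^{-2s-1}\times\nu_{\mathds{1}_1}^{s+\frac32}$ and $\nu_{\mathds{1}_1}^{s-\frac32}\times\nu^{-2s+1}$ are $(\Ha_{1,3},\chi_s)$-distinguished for every $s$. This gives existence whenever they are irreducible. The resonant values $\rho\in\{\nu_{\mathds{1}_1}^{\pm s\pm\frac32},\nu_{\mathds{1}_1}^{\pm s\pm\frac12}\}$ must then be analysed for all $s$, with the special values of $s$ as sub-cases, as in the paper's Cases~(7) and~(8).
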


\begin{proof}

The proof follows the strategy used in Theorem~\ref{G3}. By
Theorem~\ref{anand}, no irreducible supercuspidal representation of
$\G_4$ admits a generalized linear period with respect to
$(\Ha_{1,3},\chi_s)$. Hence, every $\theta\in\Irr(\G_4)$ admitting a
generalized linear period with respect to $(\Ha_{1,3},\chi_s)$ is a
non-supercuspidal representation and occurs as an irreducible quotient
of
\[
\pi=\rho\times\tau,\qquad
\rho\in\Irr(\G_k),\quad
\tau\in\Irr(\G_{4-k}),\qquad 1\leq k\leq3.
\]
If $\theta$ admits a generalized linear period with respect to
$(\Ha_{1,3},\chi_s)$, then a nonzero
$(\Ha_{1,3},\chi_s)$-equivariant linear form on $\theta$ pulls back
through the canonical surjection $\pi\twoheadrightarrow\theta$ to a
nonzero such form on $\pi$. Thus, $\pi$ itself admits a generalized
linear period with respect to $(\Ha_{1,3},\chi_s)$. By an argument
similar to that used in \cite[Lemma~17]{hariom}, we may restrict to the
case $\rho,\tau\in\Irr(\G_2)$. Applying Lemma~\ref{lem3} with $n=4$ and
$k=2$, we obtain the following necessary conditions on
$\rho\times\tau$ to admit a generalized linear period with respect to
$(\Ha_{1,3},\chi_s)$:

\begin{equation}\label{4.1}
\left.
\begin{array}{ll}
(\mathrm{I})
& \rho=\nu^{-2s}~ \text{and }\tau\text{ is }
  (\Ha_{1,1},\chi_{s+1})\text{-distinguished}.\\[2mm]
(\mathrm{II})
& \rho\text{ is }(\Ha_{1,1},\chi_{s-1})\text{-distinguished and }
   \tau=\nu^{-2s}.\\[2mm]
(\mathrm{III})
& \Hom_{\G_1\times\Delta(\G_1)\times\G_1}
  \!\left(
  r_{(1,1)}(\rho)\otimes r_{(1,1)}(\tau),
  \nu_{\mathds{1}_1}^{-(s-\frac12)}
  \otimes\mathds{1}_{\Delta(\G_1)}
  \otimes\nu_{\mathds{1}_1}^{-(s+\frac12)}
  \right)\neq0.
\end{array}
\right\}
\end{equation}

The conditions in \eqref{4.1} determine the possible inducing data
$(\rho,\tau)$. We therefore analyze $\rho\times\tau$ case by case.
As in the case $n=3$, we consider all Jordan--Hölder
constituents, which also accounts for the reversed ordering of the
inducing data. Thus, we analyze the constituents that can admit
generalized linear periods with respect to $(\Ha_{1,3},\chi_s)$.

\medskip
\noindent\textbf{Case~(1).}
Suppose that one of $\rho$ and $\tau$ is supercuspidal, generalized Steinberg,
or Speh, whereas the other is an infinite-dimensional representation. Then $\pi$ satisfies none
of the conditions in \eqref{4.1}. Hence, it does not admit a generalized linear
period with respect to $(\Ha_{1,3},\chi_s)$.

\medskip
\noindent\textbf{Case~(2).}
Suppose that exactly one of $\rho$ and $\tau$ is a character and that the other
is supercuspidal, generalized Steinberg, or Speh. We may assume that $\rho$ is a
character. It follows from \eqref{4.1} that, up to permutation, $\pi$ is one of
\[
\nu^{-2s}\times\tau,
\qquad
\nu^{-2s}\times\St_2(\sigma),
\qquad
\nu^{-2s}\times\Sp_2(\sigma'),
\]
where $\tau\in\mathcal{C}(\G_2)$ has trivial central character and
$\sigma,\sigma'\in\Irr(\G_1)$ satisfy
$\dim(\sigma),\dim(\sigma')>1$, with $\sigma$ having trivial central character
and $\sigma'$ having nontrivial central character. By Lemma~\ref{l1}, each of
these induced representations is irreducible, and Lemma~\ref{rem1} shows that
each admits a generalized linear period with respect to
$(\Ha_{1,3},\chi_s)$ arising from the closed orbit \eqref{4.1}(I).

\medskip
\noindent\textbf{Case~(3).}
Suppose that both $\rho$ and $\tau$ are characters. It follows from \eqref{4.1} that $\pi$ must be one of the following:
$$
\nu^{-2s}\times\mathds{1}_2 \quad\text{with } s=-1,
\qquad
\mathds{1}_2\times\nu^{-2s} \quad\text{with } s=1,
\qquad
\nu^{-2s+2}\times\nu^{-2s-2} \quad\text{with } s=0.
$$
We analyze these cases separately.

\smallskip
\noindent\textbf{(3a)}
Let $\pi=\nu^{-2s}\times\mathds{1}_2$ with $s=-1$. Then $\pi$ has the following Jordan--Hölder constituents:
\[
\begin{aligned}
\theta_1
&=
\nu_{\mathds{1}_1}^{-s-\scriptstyle\frac12}
\times
\mathcal{S}\left(
\left[
\textstyle-s-\scriptstyle\frac32,~
\textstyle-s+\scriptstyle\frac12
\right]_{(\mathds{1}_1)}
\right),\\
\theta_2
&=
\mathcal{S}\left(
\left[
\textstyle-s-\scriptstyle\frac12,~
\textstyle-s+\scriptstyle\frac12
\right]_{(\mathds{1}_1)},
\left[
\textstyle-s-\scriptstyle\frac32,~
\textstyle-s-\scriptstyle\frac12
\right]_{(\mathds{1}_1)}
\right).
\end{aligned}
\]

By M\oe glin--Waldspurger algorithm\cite[\S 1]{BR07}, we have 
\[
\theta_2\simeq
\mathcal{Q}\left(
\left[
\textstyle-s-\scriptstyle\frac12,~
\textstyle-s+\scriptstyle\frac12
\right]_{(\mathds{1}_1)},
\left[
\textstyle-s-\scriptstyle\frac32,~
\textstyle-s-\scriptstyle\frac12
\right]_{(\mathds{1}_1)}
\right),
\]
which is the unique irreducible quotient of
\[
\mathcal{Q}\left(
\left[
\textstyle-s-\scriptstyle\frac12,~
\textstyle-s+\scriptstyle\frac12
\right]_{(\mathds{1}_1)}
\right)
\times
\mathcal{Q}\left(
\left[
\textstyle-s-\scriptstyle\frac32,~
\textstyle-s-\scriptstyle\frac12
\right]_{(\mathds{1}_1)}
\right).
\]
It is easy to observe from \eqref{4.1} that the latter induced representation does not admit a generalized linear period with respect to $(\Ha_{1,3},\chi_s)$; hence, neither does $\theta_2$. Since $\pi$ admits such a period, Lemma~\ref{basic} implies that $\theta_1$ admits a generalized linear period with respect to $(\Ha_{1,3},\chi_s)$.

\smallskip
\noindent\textbf{(3b)}
Let $\pi=\mathds{1}_2\times\nu^{-2s}$ with $s=1$. The same argument, with the
two factors interchanged, shows that the unique irreducible quotient of $\pi$
admits a generalized linear period with respect to $(\Ha_{1,3},\chi_s)$.

\smallskip
\noindent\textbf{(3c)}
Let $\pi=\nu^{-2s+2}\times\nu^{-2s-2}$ with $s=0$. Its Jordan--Hölder constituents are:

\[
\begin{aligned}
\theta_1
&=
\mathcal{S}\left(
\left[
\textstyle -s-\scriptstyle\frac32,~
\textstyle -s+\scriptstyle\frac32
\right]_{(\mathds{1}_1)}
\right),\\
\theta_2
&=
\mathcal{S}\left(
\left[
\textstyle -s+\scriptstyle\frac12,~
\textstyle -s+\scriptstyle\frac32
\right]_{(\mathds{1}_1)},
\left[
\textstyle -s-\scriptstyle\frac32,~
\textstyle -s-\scriptstyle\frac12
\right]_{(\mathds{1}_1)}
\right).
\end{aligned}
\]

By Lemma~\ref{cont}, $\theta_1$ admits a generalized linear period with respect to $(\Ha_{1,3},\chi_s)$. Furthermore, by M\oe glin--Waldspurger algorithm\cite[\S 1]{BR07}, we have
\[
\theta_2\simeq
\mathcal{Q}\left(
\left[
\textstyle-s+\scriptstyle\frac32
\right]_{(\mathds{1}_1)},
\left[
\textstyle-s-\scriptstyle\frac12,~
\textstyle-s+\scriptstyle\frac12
\right]_{(\mathds{1}_1)},
\left[
\textstyle-s-\scriptstyle\frac32
\right]_{(\mathds{1}_1)}
\right).
\]
This is the unique irreducible quotient of
$$
\nu_{\mathds{1}_1}^{-s+\scriptstyle\frac32}
\times
\mathcal{Q}\left(
\left[
\textstyle-s-\scriptstyle\frac12,~
\textstyle-s+\scriptstyle\frac12
\right]_{(\mathds{1}_1)},
\left[
\textstyle-s-\scriptstyle\frac32
\right]_{(\mathds{1}_1)}
\right),
$$
which does not admit a generalized linear period with respect to $(\Ha_{1,3},\chi_s)$. Hence, neither does $\theta_2$.

\medskip
\noindent\textbf{Case~(4).}
Suppose that both $\rho$ and $\tau$ are twists of the Steinberg representation. Then the possible representation $\pi$ according to $\eqref{4.1}$ is:
\[
\pi=\nu^{-2s}\St_2\times\nu^{-2s}\St_2~\text{with}~ s=0.
\]
Since $\pi$ is an irreducible product of discrete series representations,
Theorem~\ref{anand} shows that it does not admit a generalized linear period with respect to $(\Ha_{1,3},\chi_s)$.

\medskip
\noindent\textbf{Case~(5).}
Suppose that exactly one of $\rho$ and $\tau$ is a twist of the Steinberg
representation and the other is a principal-series representation. By
\eqref{4.1}, the possible induced representations are:
\[
\begin{aligned}
&\nu_{\mathds{1}_1}^{-s+\frac12}
 \times\nu_{\mathds{1}_1}^{s-\frac12}
 \times\nu^{-2s}\St_2,
&&s\notin\{0,1\},\\
&\nu^{s-\frac12}\times\nu^{-s+\frac12}\times\nu^{-2s}\St_2,
&&s\notin\{0,1\},\\
&\nu^{-2s}\St_2
 \times\nu_{\mathds{1}_1}^{-s-\frac12}
 \times\nu_{\mathds{1}_1}^{s+\frac12},
&&s\notin\{-1,0\},\\
&\nu^{-2s}\St_2
 \times\nu_{\mathds{1}_1}^{s+\frac12}
 \times\nu_{\mathds{1}_1}^{-s-\frac12},
&&s\notin\{-1,0\}.
\end{aligned}
\]
Consider the first representation. By Lemma~\ref{l1}, it is irreducible precisely when $s\neq-\frac12$. For $s\neq\frac12$, it satisfies none of the conditions in Lemma~\ref{lem3} with $n=4$ and $k=1$, and hence does not admit a generalized linear period with respect to $(\Ha_{1,3},\chi_s)$. When $s=\frac12$, it satisfies the condition associated with the open orbit~\eqref{4.1}\textup{(III)}, whereas its contragredient satisfies none of the conditions in Lemma~\ref{lem3}. Thus, its contragredient does not admit a generalized linear period with respect to $(\Ha_{1,3},\chi_{-s})$. By Lemma~\ref{cont}, the representation $\pi$ therefore does not admit a generalized linear period with respect to $(\Ha_{1,3},\chi_s)$.

It remains to consider $s=-\frac12$. In this case, $\pi$ has the Jordan--Hölder constituents,
\[
\nu_{\mathds{1}_1}^{-s+\frac12}\times\nu^{-2s-3}\mathcal{Q}_3
\qquad\text{and}\qquad
\nu_{\mathds{1}_1}^{-s+\frac12}\times\nu^{-2s-3}\mathcal{S}_3.
\]
Neither satisfies the conditions of Lemma~\ref{lem3} with $n=4$ and $k=1$. Thus, neither admits a generalized linear period with respect to $(\Ha_{1,3},\chi_s)$. The remaining three induced representations are treated similarly and yield no Jordan--Hölder constituents admitting a generalized linear period with respect to $(\Ha_{1,3},\chi_s)$.

\medskip
\noindent\textbf{Case~(6).}
Suppose that exactly one of $\rho$ and $\tau$ is a character and that the other
is a twist of Steinberg representation. It follows from \eqref{4.1} that $\pi$ must be one of the following:
\[
\begin{aligned}
&\nu^{-2s}\times\chi\St_2\qquad\text{or}\qquad
\chi\St_2\times\nu^{-2s}
\quad\text{with }\chi^2=\mathds{1}_1,\\
&\nu^{-2s}\St_2\times\nu^{-2s-2}
\quad\text{with }s=-1,\\
&\nu^{-2s+2}\times\nu^{-2s}\St_2
\quad\text{with }s=1.
\end{aligned}
\]
We analyze these cases one by one.

\noindent\textbf{(6a)}
Let $\pi=\chi\St_2\times\nu^{-2s}$, where $\chi^2=\mathds{1}_1$. If
    $\chi\neq\mathds{1}_1$, or if $\chi=\mathds{1}_1$ and $s\neq\pm2$, then $\pi$ is irreducible and admits a generalized linear period with respect to $(\Ha_{1,3},\chi_s)$ arising from the closed orbit~\eqref{4.1}\textup{(II)}. Suppose that $\chi=\mathds{1}_1$ and $s=2$. Then $\pi$ has the Jordan--Hölder constituents,
\[
\nu^{-2s}\mathcal{S}_4
\qquad\text{and}\qquad
\nu^{-2s}\mathcal{Q}_4.
\]
The representation $\nu^{2s}\widetilde{\mathcal{S}}_4$ is the unique
irreducible quotient of
$
\nu_{\mathds{1}_1}^{s+\frac12}\times\nu^{2s-3}\St_3.
$
The latter induced representation satisfies none of the conditions in Lemma~\ref{lem3} with $n=4$ and $k=1$, and hence does not admit a generalized linear period with respect to $(\Ha_{1,3},\chi_{-s})$. Consequently, $\nu^{2s}\widetilde{\mathcal{S}}_4$ does not admit a generalized linear period either, and Lemma~\ref{cont} implies that $\nu^{-2s}\mathcal{S}_4$ does not admit a generalized linear period with respect to $(\Ha_{1,3},\chi_s)$. Since $\pi$ admits a generalized linear period with respect to $(\Ha_{1,3},\chi_s)$, Lemma~\ref{basic} shows that $\nu^{-2s}\mathcal{Q}_4$ admits a generalized linear period with respect to $(\Ha_{1,3},\chi_s)$. 

Similarly, when $s=-2$, the only Jordan--Hölder constituent admitting a generalized linear period with respect to $(\Ha_{1,3},\chi_s)$ is $\nu^{-2s}\widetilde{\mathcal{Q}}_4$. When $\chi$ is a quadratic character of $\G_1$ and
$\pi=\nu^{-2s}\times\chi\St_2$, no new representation admitting a
generalized linear period with respect to $(\Ha_{1,3},\chi_s)$ occurs.

\noindent\textbf{(6b)}
Let $\pi=\nu^{-2s}\St_2\times\nu^{-2s-2}$ with $s=-1$. By Lemma~\ref{l1},
$\pi$ is irreducible and is the unique irreducible quotient of
\[
\pi'
=
\nu_{\mathds{1}_1}^{-s-\frac12}
\times
\left(
\nu_{\mathds{1}_1}^{-s+\frac12}\times\nu^{-2s-2}
\right).
\]
The representation $\pi'$ satisfies none of the conditions in
Lemma~\ref{lem3} with $n=4$ and $k=1$, and therefore it does not admit a
generalized linear period with respect to $(\Ha_{1,3},\chi_s)$. Hence, neither does its quotient $\pi$. Applying
Lemma~\ref{cont} gives the same conclusion for
$\nu^{-2s+2}\times\nu^{-2s}\St_2$ when $s=1$.

\medskip
\noindent\textbf{Case~(7).}
Suppose that exactly one of $\rho$ and $\tau$ is a character and that the other
is an irreducible principal-series representation. The possibilities arising
from \eqref{4.1} are
\[
\begin{aligned}
&\nu^{-2s}\times\sigma\times\widetilde{\sigma},
\quad \sigma\times\widetilde{\sigma}\times\nu^{-2s}
\quad\text{with }\sigma\not\simeq\nu_{\sigma'}^{\pm\frac12}\sigma'
\text{ for any self-dual }\sigma'\in\Irr(\G_1),\\
&\left(\nu_{\mathds{1}_1}^{-s+\frac12}\times\nu_{\mathds{1}_1}^{s+\frac32}\right)\times\nu^{-2s+2}
\quad\text{with }s\notin\{-1,0\},\\
&\left(\nu_{\mathds{1}_1}^{s+\frac32}\times\nu_{\mathds{1}_1}^{-s+\frac12}\right)\times\nu^{-2s+2}
\quad\text{with }s\notin\{-1,0\},\\
&\nu^{-2s+2}\times\left(\nu_{\mathds{1}_1}^{s-\frac32}\times\nu_{\mathds{1}_1}^{-s-\frac12}\right)
\quad\text{with }s\notin\{0,1\},\\
&\nu^{-2s+2}\times\left(\nu_{\mathds{1}_1}^{-s-\frac12}\times\nu_{\mathds{1}_1}^{s-\frac32}\right)
\quad\text{with }s\notin\{0,1\}.
\end{aligned}
\]
We analyze these cases separately.

\noindent\textbf{(7a)}
Let $\pi=\sigma\times\widetilde{\sigma}\times\nu^{-2s}$. If
$
\sigma\notin
\left\{
\nu_{\mathds{1}_1}^{-s\pm\frac32},
\nu_{\mathds{1}_1}^{s\pm\frac32}
\right\},
$
then $\pi$ is irreducible by Lemma~\ref{l1} and admits a generalized linear
period with respect to $(\Ha_{1,3},\chi_s)$ arising from the closed orbit~\eqref{4.1}\textup{(II)}. Now consider the remaining cases.

\begin{enumerate}
\item[\upshape(i)]
Suppose that $\sigma=\nu_{\mathds{1}_1}^{-s-\frac32}$ with
$s\notin\{-2,-1\}$. If $s\neq0$, then $\pi$ has the following Jordan--Hölder constituents:
\[
\begin{aligned}
\theta_1
&=
\nu_{\mathds{1}_1}^{s+\scriptstyle\frac32}
\times
\mathcal{S}\left(
\left[
\textstyle-s-\scriptstyle\frac32
\right]_{(\mathds{1}_1)},
\left[
\textstyle-s-\scriptstyle\frac12,~
\textstyle-s+\scriptstyle\frac12
\right]_{(\mathds{1}_1)}
\right),\\
\theta_2
&=
\mathcal{S}\left(
\left[
\textstyle-s-\scriptstyle\frac32,~
\textstyle-s+\scriptstyle\frac12
\right]_{(\mathds{1}_1)}
\right)
\times
\nu_{\mathds{1}_1}^{s+\scriptstyle\frac32}.
\end{aligned}
\]
Lemma~\ref{lem3}, applied with $n=4$ and $k=3$, shows that $\theta_2$ admits a generalized linear period with respect to $(\Ha_{1,3},\chi_s)$ arising from the closed orbit, whereas $\theta_1$ does not satisfy the conditions of the lemma and hence does not admit a generalized linear period with respect to $(\Ha_{1,3},\chi_s)$.
If $s=0$, then $\pi$ has the following Jordan--Hölder constituents:
\[
\begin{aligned}
\theta_1
&=
\mathcal{S}\left(
\left[
\textstyle-s-\scriptstyle\frac32,~
\textstyle-s+\scriptstyle\frac32
\right]_{(\mathds{1}_1)}
\right),\\
\theta_2
&=
\mathcal{S}\left(
\left[
\textstyle-s+\scriptstyle\frac32
\right]_{(\mathds{1}_1)},
\left[
\textstyle-s-\scriptstyle\frac32,~
\textstyle-s+\scriptstyle\frac12
\right]_{(\mathds{1}_1)}
\right),\\
\theta_3
&=
\mathcal{S}\left(
\left[
\textstyle-s-\scriptstyle\frac32
\right]_{(\mathds{1}_1)},
\left[
\textstyle-s-\scriptstyle\frac12,~
\textstyle-s+\scriptstyle\frac32
\right]_{(\mathds{1}_1)}
\right),\\
\theta_4
&=
\mathcal{S}\left(
\left[
\textstyle-s-\scriptstyle\frac32
\right]_{(\mathds{1}_1)},
\left[
\textstyle-s+\scriptstyle\frac32
\right]_{(\mathds{1}_1)},
\left[
\textstyle-s-\scriptstyle\frac12,~
\textstyle-s+\scriptstyle\frac12
\right]_{(\mathds{1}_1)}
\right).
\end{aligned}
\]
Here $\theta_1$ is the trivial representation and therefore admits a generalized linear period with respect to $(\Ha_{1,3},\chi_s)$.
Furthermore, by M\oe glin--Waldspurger algorithm\cite[\S 1]{BR07}, we have
\[
\theta_2\simeq
\mathcal{Q}\left(
\left[
\textstyle-s+\scriptstyle\frac12,~
\textstyle-s+\scriptstyle\frac32
\right]_{(\mathds{1}_1)},
\left[
\textstyle-s-\scriptstyle\frac12
\right]_{(\mathds{1}_1)},
\left[
\textstyle-s-\scriptstyle\frac32
\right]_{(\mathds{1}_1)}
\right),
\]
which is the unique irreducible quotient of
\[
\mathcal{Q}\left(
\left[
\textstyle-s+\scriptstyle\frac12,~
\textstyle-s+\scriptstyle\frac32
\right]_{(\mathds{1}_1)}
\right)
\times
\mathcal{Q}\left(
\left[
\textstyle-s-\scriptstyle\frac12
\right]_{(\mathds{1}_1)},
\left[
\textstyle-s-\scriptstyle\frac32
\right]_{(\mathds{1}_1)}
\right).
\]
The latter representation does not satisfy the conditions in \eqref{4.1} and hence does not admit a generalized linear period with respect to $(\Ha_{1,3},\chi_s)$. Therefore, neither does $\theta_2$. Taking the contragredient, we obtain the same conclusion for $\theta_3$. Finally, by M\oe glin--Waldspurger algorithm\cite[\S 1]{BR07}, we have
\[
\theta_4\simeq
\mathcal{Q}\left(
\left[
\textstyle-s+\scriptstyle\frac12,~
\textstyle-s+\scriptstyle\frac32
\right]_{(\mathds{1}_1)},
\left[
\textstyle-s-\scriptstyle\frac32,~
\textstyle-s-\scriptstyle\frac12
\right]_{(\mathds{1}_1)}
\right),
\]
which is the unique irreducible quotient of \[
\mathcal{Q}\left(
\left[
\textstyle-s+\scriptstyle\frac12,~
\textstyle-s+\scriptstyle\frac32
\right]_{(\mathds{1}_1)}\right) \times\left(
\left[
\textstyle-s-\scriptstyle\frac32,~
\textstyle-s-\scriptstyle\frac12
\right]_{(\mathds{1}_1)}
\right),
\]
The latter representation does not satisfy the conditions in
\eqref{4.1} and hence does not admit a generalized linear period with
respect to $(\Ha_{1,3},\chi_s)$.

\item[\upshape(ii)]
Suppose that $\sigma=\nu_{\mathds{1}_1}^{-s+\frac32}$ with
$s\notin\{1,2\}$. If $s\neq 0$, then $\pi$ has the following  Jordan--Hölder constituents:
\[
\begin{aligned}
\theta_1
&=
\nu_{\mathds{1}_1}^{s-\scriptstyle\frac32}
\times
\mathcal{S}\left(
\left[
\textstyle-s+\scriptstyle\frac32
\right]_{(\mathds{1}_1)},
\left[
\textstyle-s-\scriptstyle\frac12,~
\textstyle-s+\scriptstyle\frac12
\right]_{(\mathds{1}_1)}
\right),\\
\theta_2
&=
\nu_{\mathds{1}_1}^{s-\scriptstyle\frac32}
\times
\mathcal{S}\left(
\left[
\textstyle-s-\scriptstyle\frac12,~
\textstyle-s+\scriptstyle\frac32
\right]_{(\mathds{1}_1)}
\right).
\end{aligned}
\]
The representation $\widetilde{\theta}_1$ does not admit a generalized linear period with respect to $(\Ha_{1,3},\chi_{-s})$ by part~\textup{(i)}. Lemma~\ref{cont} therefore implies that $\theta_1$ does not admit a generalized linear period with respect to $(\Ha_{1,3},\chi_s)$.  On the other hand, Lemma~\ref{lem3} with $n=4$ and $k=1$ shows that $\theta_2$ admits a
generalized linear period with respect to $(\Ha_{1,3},\chi_s)$ arising from the closed orbit.
\end{enumerate}

The cases $\sigma=\nu_{\mathds{1}_1}^{s+\frac32}$ with
$s\notin\{-2,-1\}$ and $\sigma=\nu_{\mathds{1}_1}^{s-\frac32}$ with
$s\notin\{1,2\}$ are analogous and yield no additional Jordan--Hölder constituents admitting a generalized linear period  with respect to $(\Ha_{1,3},\chi_s)$, up to taking
the contragredient and replacing $s$ by $-s$.

\smallskip
\noindent\textbf{(7b)}
Let
$
\pi
=
\left(
\nu_{\mathds{1}_1}^{-s+\frac12}
\times\nu_{\mathds{1}_1}^{s+\frac32}
\right)
\times\nu^{-2s+2}
~\text{with}~ s\notin\{-1,0\}.
$
By Lemma~\ref{l1}, $\pi$ is irreducible if and only if $s\neq\frac12$. In
that case, writing
\[
\pi
=
\nu_{\mathds{1}_1}^{-s+\frac12}
\times
\left(
\nu_{\mathds{1}_1}^{s+\frac32}\times\nu^{-2s+2}
\right)
\]
shows that it satisfies none of the conditions in Lemma~\ref{lem3} with $n=4$
and $k=1$. Thus, $\pi$ does not admit a generalized linear period with respect to $(\Ha_{1,3},\chi_s)$. If $s=\frac12$, then $\pi$ has the following Jordan--Hölder constituents:  $$\nu_{\mathds{1}_1}^{-s+\frac12}\times\nu^{-2s+3}~ \text{and}~ \nu_{\mathds{1}_1}^{-s+\frac12}\times\nu^{-2s+1}\mathcal{Q}_3.$$ Neither satisfies the conditions of Lemma~\ref{lem3} with $n=4$ and $k=1$. The remaining three forms listed at the beginning of Case~\textup{(7)} are handled similarly and yield no additional irreducible representations admitting a generalized linear period with respect to $(\Ha_{1,3},\chi_s)$.

\medskip
\noindent\textbf{Case~(8).}
Suppose that both $\rho$ and $\tau$ are irreducible principal-series
representations. In this case, $\pi$ is one of the following:
\[
\begin{aligned}
&\nu_{\mathds{1}_1}^{-s+\frac12}
 \times\sigma\times\widetilde{\sigma}
 \times\nu_{\mathds{1}_1}^{-s-\frac12},\\
&\sigma\times\nu_{\mathds{1}_1}^{-s+\frac12}
 \times\widetilde{\sigma}\times\nu_{\mathds{1}_1}^{-s-\frac12},\\
&\nu_{\mathds{1}_1}^{-s+\frac12}
 \times\sigma\times\nu_{\mathds{1}_1}^{-s-\frac12}
 \times\widetilde{\sigma},\\
&\sigma\times\nu_{\mathds{1}_1}^{-s+\frac12}
 \times\nu_{\mathds{1}_1}^{-s-\frac12}
 \times\widetilde{\sigma},
\end{aligned}
\]
where
\[
\sigma\notin
\left\{
\nu_{\mathds{1}_1}^{\pm s+\frac32},
\nu_{\mathds{1}_1}^{\pm s-\frac12}
\right\}.
\]
For the above representations, the inducing data are the same up to
permutation. Hence, it suffices to analyze one of them. We therefore analyze only
$$\pi
=
\nu_{\mathds{1}_1}^{-s+\frac12}
\times\sigma\times\widetilde{\sigma}
\times\nu_{\mathds{1}_1}^{-s-\frac12}, \qquad \sigma\notin
\left\{
\nu_{\mathds{1}_1}^{\pm s+\frac32},
\nu_{\mathds{1}_1}^{\pm s-\frac12}
\right\}.$$
This representation is not irreducible, since the first and fourth
components are linked. Next, we distinguish cases according to the representation
$\sigma$ and analyze the resulting Jordan--Hölder constituents.

 \noindent \textbf{(8a)} Suppose that 
$
\sigma\notin
\left\{
\nu_{\mathds{1}_1}^{\pm s-\frac32},
\nu_{\mathds{1}_1}^{\pm s+\frac12},
\nu_{\sigma'}^{\pm\frac12}\sigma'
\;\middle|\;
\sigma'\in\Irr(\G_1)\text{ is self-dual}
\right\}$. 
Then $\pi$ has the Jordan--Hölder constituents
\[
\theta_1=\nu^{-2s}\times\sigma\times\widetilde{\sigma}
\qquad\text{and}\qquad
\theta_2=\nu^{-2s}\St_2\times\sigma\times\widetilde{\sigma}.
\]

 The representation $\theta_1$ admits a generalized linear period with respect to $(\Ha_{1,3},\chi_s)$ arising from the closed orbit \eqref{4.1}\textup{(I)}. The representation $\theta_2$ satisfies none of the conditions in \eqref{4.1} and therefore does not admit a generalized linear period with respect to $(\Ha_{1,3},\chi_s)$. It remains to consider the exceptional choices of $\sigma$.

\smallskip
\noindent\textbf{(8b)}
Suppose that $\sigma=\nu_{\mathds{1}_1}^{-s-\frac32}$ with
$s\notin\{-\frac32,-\frac12\}$. If $s\notin\{-2,-1,0\}$, then $\pi$ has the following
Jordan--Hölder constituents:
\[
\begin{aligned}
\theta_1
&=
\mathcal{S}\left(
\left[
\textstyle-s-\scriptstyle\frac32,~
\textstyle-s+\scriptstyle\frac12
\right]_{(\mathds{1}_1)}
\right)
\times
\nu_{\mathds{1}_1}^{s+\scriptstyle\frac32},\\
\theta_2
&=
\mathcal{S}\left(
\left[
\textstyle-s-\scriptstyle\frac32
\right]_{(\mathds{1}_1)},
\left[
\textstyle-s-\scriptstyle\frac12,~
\textstyle-s+\scriptstyle\frac12
\right]_{(\mathds{1}_1)}
\right)
\times
\nu_{\mathds{1}_1}^{s+\scriptstyle\frac32},\\
\theta_3
&=
\mathcal{S}\left(
\left[
-s+\scriptstyle\frac12
\right]_{(\mathds{1}_1)},
\left[
\textstyle-s-\scriptstyle\frac32,~
\textstyle-s-\scriptstyle\frac12
\right]_{(\mathds{1}_1)}
\right)
\times
\nu_{\mathds{1}_1}^{s+\scriptstyle\frac32},\\
\theta_4
&=
\mathcal{S}\left(
\left[
\textstyle-s+\scriptstyle\frac12
\right]_{(\mathds{1}_1)},
\left[
\textstyle-s-\scriptstyle\frac32
\right]_{(\mathds{1}_1)},
\left[
\textstyle-s-\scriptstyle\frac12
\right]_{(\mathds{1}_1)}
\right)
\times
\nu_{\mathds{1}_1}^{s+\scriptstyle\frac32}.
\end{aligned}
\]
The representations $\theta_1$ and $\theta_2$ were treated in Case~\textup{(7a)}. By \eqref{4.1}, $\theta_3$ does not admit a generalized linear period with respect to $(\Ha_{1,3},\chi_s)$. By the M\oe glin--Waldspurger algorithm~\cite[\S 1]{BR07}, we have $$
\theta_4\simeq
\mathcal{Q}\left(
\left[
\textstyle-s-\scriptstyle\frac32,~
\textstyle-s+\scriptstyle\frac12
\right]_{(\mathds{1}_1)}
\right)
\times
\nu_{\mathds{1}_1}^{s+\scriptstyle\frac32}.
$$ Another application of Lemma \ref{lem3} with $n=4$ and $k=1$ shows that $\theta_4$ does not admit a generalized linear period with respect to $(\Ha_{1,3},\chi_s)$.
We now consider the exceptional values $s=-2,-1,0$.
\begin{enumerate}
\item[\upshape(i)]
Suppose that $s=-2$. Then
$
\pi
=
\nu_{\mathds{1}_1}^{-s+\frac12}
\times\nu_{\mathds{1}_1}^{-s-\frac32}
\times\nu_{\mathds{1}_1}^{-s-\frac52}
\times\nu_{\mathds{1}_1}^{-s-\frac12}
$
has the following Jordan--Hölder constituents:
\[
\begin{aligned}
\theta_1
&=
\mathcal{S}\left(
\left[
\textstyle-s-\scriptstyle\frac52,~
\textstyle-s+\scriptstyle\frac12
\right]_{(\mathds{1}_1)}
\right),\\
\theta_2
&=
\mathcal{S}\left(
\left[
\textstyle-s+\scriptstyle\frac12
\right]_{(\mathds{1}_1)},
\left[
\textstyle-s-\scriptstyle\frac52,~
\textstyle-s-\scriptstyle\frac12
\right]_{(\mathds{1}_1)}
\right),\\
\theta_3
&=
\mathcal{S}\left(
\left[
\textstyle-s-\scriptstyle\frac32,~
\textstyle-s+\scriptstyle\frac12
\right]_{(\mathds{1}_1)},
\left[
\textstyle-s-\scriptstyle\frac52
\right]_{(\mathds{1}_1)}
\right),\\
\theta_4
&=
\mathcal{S}\left(
\left[
\textstyle-s-\scriptstyle\frac52,~
\textstyle-s-\scriptstyle\frac32
\right]_{(\mathds{1}_1)},
\left[
\textstyle-s-\scriptstyle\frac12,~
\textstyle-s+\scriptstyle\frac12
\right]_{(\mathds{1}_1)}
\right),\\
\theta_5
&=
\mathcal{S}\left(
\left[
\textstyle-s+\scriptstyle\frac12
\right]_{(\mathds{1}_1)},
\left[
\textstyle-s-\scriptstyle\frac32,~
\textstyle-s-\scriptstyle\frac12
\right]_{(\mathds{1}_1)},
\left[
\textstyle-s-\scriptstyle\frac52
\right]_{(\mathds{1}_1)}
\right),\\
\theta_6
&=
\mathcal{S}\left(
\left[
\textstyle-s-\scriptstyle\frac32
\right]_{(\mathds{1}_1)},
\left[
\textstyle-s-\scriptstyle\frac52
\right]_{(\mathds{1}_1)},
\left[
\textstyle-s-\scriptstyle\frac12,~
\textstyle-s+\scriptstyle\frac12
\right]_{(\mathds{1}_1)}
\right),\\
\theta_7
&=
\mathcal{S}\left(
\left[
-s+\scriptstyle\frac12
\right]_{(\mathds{1}_1)},
\left[
\textstyle-s-\scriptstyle\frac52,~
\textstyle-s-\scriptstyle\frac32
\right]_{(\mathds{1}_1)},
\left[
\textstyle-s-\scriptstyle\frac12
\right]_{(\mathds{1}_1)}
\right),\\
\theta_8
&=
\mathcal{S}\left(
\left[
\textstyle-s+\scriptstyle\frac12
\right]_{(\mathds{1}_1)},
\left[
\textstyle-s-\scriptstyle\frac32
\right]_{(\mathds{1}_1)},
\left[
\textstyle-s-\scriptstyle\frac52
\right]_{(\mathds{1}_1)},
\left[
\textstyle-s-\scriptstyle\frac12
\right]_{(\mathds{1}_1)}
\right).
\end{aligned}
\]
 The representation $\theta_1$ is a nontrivial character and hence does not admit a generalized linear period with respect to $(\Ha_{1,3},\chi_s)$ by Lemma~\ref{cont}. Next, $\theta_2$ is the unique irreducible quotient of $$
\mathcal{S}\left(
\left[
\textstyle-s-\scriptstyle\frac52,~
\textstyle-s-\scriptstyle\frac12
\right]_{(\mathds{1}_1)}
\right)
\times
\nu_{\mathds{1}_1}^{-s+\scriptstyle\frac12}.
$$
 By Lemma~\ref{lem3} with $n=4$ and $k=3$, the latter representation does not admit a generalized linear period  with respect to $(\Ha_{1,3},\chi_s)$. Consequently, neither does $\theta_2$.  By M\oe glin--Waldspurger algorithm~\cite[\S 1]{BR07}, $\theta_3$ is isomorphic to $\nu^{-2s}\widetilde{\mathcal{Q}_4}$, whose generalized linear period with respect to $(\Ha_{1,3},\chi_s)$ was already determined in Case (6). By Lemma~\ref{l1}, the representation $\theta_4$ is the unique irreducible quotient of $\nu^{-2(s+2)}\times\nu^{-2s}$. Since the latter representation does not admit a generalized linear period with respect to $(\Ha_{1,3},\chi_s)$, neither does $\theta_4$. Next, $\theta_5$ is isomorphic to$$
\mathcal{Q}\left(
\left[
\textstyle-s-\scriptstyle\frac12,~
\textstyle-s+\scriptstyle\frac12
\right]_{(\mathds{1}_1)},
\left[
\textstyle-s-\scriptstyle\frac52,~
\textstyle-s-\scriptstyle\frac32
\right]_{(\mathds{1}_1)}
\right),
$$
 which is the unique irreducible quotient of $\nu^{-2s}\St_2\times\nu^{-2(s+2)}\St_2$. The latter representation does not satisfy the conditions in
\eqref{4.1} and hence does not admit a generalized linear period with
respect to $(\Ha_{1,3},\chi_s)$.
 Hence, neither does $\theta_5$. Similarly, by M\oe glin--Waldspurger algorithm~\cite[\S 1]{BR07}, $\theta_6$ is isomorphic to $$
\mathcal{Q}\left(
\left[
\textstyle-s+\scriptstyle\frac12
\right]_{(\mathds{1}_1)},
\left[
\textstyle-s-\scriptstyle\frac52,~
\textstyle-s-\scriptstyle\frac12
\right]_{(\mathds{1}_1)}
\right),
$$ which is the unique irreducible quotient of $\nu^{-s+\frac12}_{\mathds{1}_1}\times \mathcal{Q}([{-s-\frac52},{-s-\frac12}]_{(\mathds{1}_1)})$. By Lemma~\ref{lem3} with $n=4$ and $k=1$, the latter representation does not admit a generalized linear period  with respect to $(\Ha_{1,3},\chi_s)$. Hence, neither does $\theta_6$. Furthermore, by M\oe glin--Waldspurger algorithm~\cite[\S 1]{BR07}, $\theta_7$ is isomorphic to $$
\mathcal{Q}\left(
\left[
\textstyle-s-\scriptstyle\frac12,~
\textstyle-s+\scriptstyle\frac12
\right]_{(\mathds{1}_1)},
\left[
\textstyle-s-\scriptstyle\frac32
\right]_{(\mathds{1}_1)},
\left[
\textstyle-s-\scriptstyle\frac52
\right]_{(\mathds{1}_1)}
\right),
$$ which is the unique irreducible quotient of $
\pi'
=
\mathcal{Q}\left(
\left[
\textstyle-s-\scriptstyle\frac12,~
\textstyle-s+\scriptstyle\frac12
\right]_{(\mathds{1}_1)}
\right)
\times
\nu_{\mathds{1}_1}^{-s-\scriptstyle\frac32}
\times
\nu_{\mathds{1}_1}^{-s-\scriptstyle\frac52}.
$ Since $\pi'$ has $\nu^{-2s}\St_2\times\nu^{-2(s+2)}$ as a quotient, $\theta_7$ is also the unique irreducible quotient of $\nu^{-2s}\St_2\times\nu^{-2(s+2)}$. The latter representation does not satisfies the conditions in \eqref{4.1} and hence does not admit a generalized linear period  with respect to $(\Ha_{1,3},\chi_s)$. Hence, neither does $\theta_7$. Finally, $\theta_8$ does not admit a generalized linear period with respect to $(\Ha_{1,3},\chi_s)$ by Theorem~\ref{anand}.
\item[\upshape(ii)]
Suppose that $s=-1$. Then
$
\pi
=
\nu_{\mathds{1}_1}^{-s+\frac12}
\times\nu_{\mathds{1}_1}^{-s-\frac32}
\times\nu_{\mathds{1}_1}^{-s-\frac12}
\times\nu_{\mathds{1}_1}^{-s-\frac12}
$
has the following Jordan--Hölder constituents:
\[
\begin{aligned}
\theta_1
&=
\mathcal{Q}\left(
\left[
\textstyle-s-\scriptstyle\frac32,~
\textstyle-s+\scriptstyle\frac12
\right]_{(\mathds{1}_1)},
\left[
\textstyle-s-\scriptstyle\frac12
\right]_{(\mathds{1}_1)}
\right),\\
\theta_2
&=
\mathcal{Q}\left(
\left[
\textstyle-s-\scriptstyle\frac12,~
\textstyle-s+\scriptstyle\frac12
\right]_{(\mathds{1}_1)},
\left[
\textstyle-s-\scriptstyle\frac32,~
\textstyle-s-\scriptstyle\frac12
\right]_{(\mathds{1}_1)}
\right),\\
\theta_3
&=
\mathcal{Q}\left(
\left[
\textstyle-s+\scriptstyle\frac12
\right]_{(\mathds{1}_1)},
\left[
\textstyle-s-\scriptstyle\frac32,~
\textstyle-s-\scriptstyle\frac12
\right]_{(\mathds{1}_1)},
\left[
\textstyle-s-\scriptstyle\frac12
\right]_{(\mathds{1}_1)}
\right),\\
\theta_4
&=
\mathcal{Q}\left(
\left[
\textstyle-s+\scriptstyle\frac12
\right]_{(\mathds{1}_1)},
\left[
\textstyle-s-\scriptstyle\frac32
\right]_{(\mathds{1}_1)},
\left[
\textstyle-s-\scriptstyle\frac12
\right]_{(\mathds{1}_1)},
\left[
\textstyle-s-\scriptstyle\frac12
\right]_{(\mathds{1}_1)}
\right).
\end{aligned}
\]
The representation $\theta_1$ is the unique irreducible quotient of
$
\nu_{\mathds{1}_1}^{-s-\scriptstyle\frac12}
\times
\mathcal{Q}\left(
\left[
\textstyle-s-\scriptstyle\frac32,~
\textstyle-s+\scriptstyle\frac12
\right]_{(\mathds{1}_1)}
\right),
$
which satisfies none of the conditions in Lemma~\ref{lem3} with $n=4$ and $k=1$, hence it does not admit a generalized linear period with respect to $(\Ha_{1,3},\chi_s)$. Hence, neither does $\theta_1$. Similarly, $\theta_2$ is the unique
irreducible quotient of
$$
\mathcal{Q}\left(
\left[
\textstyle-s-\scriptstyle\frac12,~
\textstyle-s+\scriptstyle\frac12
\right]_{(\mathds{1}_1)}
\right)
\times
\mathcal{Q}\left(
\left[
\textstyle-s-\scriptstyle\frac32,~
\textstyle-s-\scriptstyle\frac12
\right]_{(\mathds{1}_1)}
\right),
$$
which does not satisfies the conditions in \eqref{4.1}, and hence does not admit a generalized
linear period  with respect to $(\Ha_{1,3},\chi_s)$. Furthermore, $\theta_3$ is isomorphic to
$\nu^{-2s}\times\nu^{-2s-2}\St_2$. The question of whether this
representation admits a generalized linear period with respect to
$(\Ha_{1,3},\chi_s)$ was addressed in Case~\textup{(6)}.
 Finally, by M\oe glin--Waldspurger algorithm~\cite[\S 1]{BR07}, we have
$$
\theta_4\simeq
\mathcal{S}\left(
\left[
\textstyle-s-\scriptstyle\frac32,~
\textstyle-s+\scriptstyle\frac12
\right]_{(\mathds{1}_1)}
\right)
\times
\nu_{\mathds{1}_1}^{-s-\scriptstyle\frac12}.
$$
 Lemma~\ref{lem3} with $n=4$ and $k=3$ shows that $\theta_4$ admits a generalized
linear period  with respect to $(\Ha_{1,3},\chi_s)$ arising from the closed orbit.

\item[\upshape(iii)]
Suppose that $s=0$. Then
$
\pi
=
\nu_{\mathds{1}_1}^{-s+\frac12}
\times\nu_{\mathds{1}_1}^{-s-\frac32}
\times\nu_{\mathds{1}_1}^{-s+\frac32}
\times\nu_{\mathds{1}_1}^{-s-\frac12}
$
has the following Jordan--Hölder constituents:
\[
\begin{aligned}
\theta_1
&=
\mathcal{Q}\left(
\left[
\textstyle-s-\scriptstyle\frac32,~
\textstyle-s+\scriptstyle\frac32
\right]_{(\mathds{1}_1)}
\right),\\
\theta_2
&=
\mathcal{Q}\left(
\left[
\textstyle-s-\scriptstyle\frac32,~
\textstyle-s+\scriptstyle\frac12
\right]_{(\mathds{1}_1)},
\left[
\textstyle-s+\scriptstyle\frac32
\right]_{(\mathds{1}_1)}
\right),\\
\theta_3
&=
\mathcal{Q}\left(
\left[
\textstyle-s-\scriptstyle\frac32
\right]_{(\mathds{1}_1)},
\left[
\textstyle-s-\scriptstyle\frac12,~
\textstyle-s+\scriptstyle\frac32
\right]_{(\mathds{1}_1)}
\right),\\
\theta_4
&=
\mathcal{Q}\left(
\left[
\textstyle-s+\scriptstyle\frac12,~
\textstyle-s+\scriptstyle\frac32
\right]_{(\mathds{1}_1)},
\left[
\textstyle-s-\scriptstyle\frac32,~
\textstyle-s-\scriptstyle\frac12
\right]_{(\mathds{1}_1)}
\right),\\
\theta_5
&=
\mathcal{Q}\left(
\left[
\textstyle-s+\scriptstyle\frac12
\right]_{(\mathds{1}_1)},
\left[
\textstyle-s-\scriptstyle\frac32,~
\textstyle-s-\scriptstyle\frac12
\right]_{(\mathds{1}_1)},
\left[
\textstyle-s+\scriptstyle\frac32
\right]_{(\mathds{1}_1)}
\right),\\
\theta_6
&=
\mathcal{Q}\left(
\left[
\textstyle-s-\scriptstyle\frac32
\right]_{(\mathds{1}_1)},
\left[
\textstyle-s-\scriptstyle\frac12,~
\textstyle-s+\scriptstyle\frac12
\right]_{(\mathds{1}_1)},
\left[
\textstyle-s+\scriptstyle\frac32
\right]_{(\mathds{1}_1)}
\right),\\
\theta_7
&=
\mathcal{Q}\left(
\left[
\textstyle-s-\scriptstyle\frac32
\right]_{(\mathds{1}_1)},
\left[
\textstyle-s+\scriptstyle\frac12,~
\textstyle-s+\scriptstyle\frac32
\right]_{(\mathds{1}_1)},
\left[
\textstyle-s-\scriptstyle\frac12
\right]_{(\mathds{1}_1)}
\right),\\
\theta_8
&=
\mathcal{Q}\left(
\left[
\textstyle-s+\scriptstyle\frac12
\right]_{(\mathds{1}_1)},
\left[
\textstyle-s-\scriptstyle\frac32
\right]_{(\mathds{1}_1)},
\left[
\textstyle-s+\scriptstyle\frac32
\right]_{(\mathds{1}_1)},
\left[
\textstyle-s-\scriptstyle\frac12
\right]_{(\mathds{1}_1)}
\right).
\end{aligned}
\]
The representation $\theta_1$ does not admit a generalized linear period with respect to $(\Ha_{1,3},\chi_s)$ by Theorem~\ref{anand}. By Lemma~\ref{l1}, $\theta_2$ is the unique irreducible quotient of $$
\pi'
=
\nu_{\mathds{1}_1}^{-s+\scriptstyle\frac32}
\times
\mathcal{Q}\left(
\left[
\textstyle-s-\scriptstyle\frac32,~
\textstyle-s+\scriptstyle\frac12
\right]_{(\mathds{1}_1)}
\right),
$$ which does not satisfy the conditions of \eqref{4.1} with $n=4$ and $k=1$. Consequently, $\pi'$ does not admit a generalized linear period with respect to $(\Ha_{1,3},\chi_s)$, and hence neither does $\theta_2$. Taking the contragredient, we conclude that $\theta_3$ does not admit a generalized linear period with respect to $(\Ha_{1,3},\chi_s)$ either. The remaining Jordan--Hölder constituents $\theta_4,\ldots,\theta_8$ were treated in the preceding cases and yield no additional irreducible representations admitting a generalized linear period with respect to $(\Ha_{1,3},\chi_s)$ beyond those listed in Theorem~\ref{G4}. We therefore omit the repetitive individual analysis of these Jordan--Hölder constituents.
\end{enumerate}

\smallskip
\noindent\textbf{(8c)}
Suppose that $\sigma=\nu_{\mathds{1}_1}^{-s+\frac12}$ with
$s\neq\pm\frac12$. If $s\notin\{0,1\}$, then $\pi$ has the following Jordan--Hölder constituents:
\[
\begin{aligned}
\theta_1
&=
\nu_{\mathds{1}_1}^{s-\scriptstyle\frac12}
\times
\nu_{\mathds{1}_1}^{-s+\scriptstyle\frac12}
\times
\mathcal{Q}\left(
\left[
\textstyle-s-\scriptstyle\frac12,~
\textstyle-s+\scriptstyle\frac12
\right]_{(\mathds{1}_1)}
\right),\\
\theta_2
&=
\nu_{\mathds{1}_1}^{s-\scriptstyle\frac12}
\times
\nu_{\mathds{1}_1}^{-s+\scriptstyle\frac12}
\times
\mathcal{Q}\left(
\left[
\textstyle-s+\scriptstyle\frac12
\right]_{(\mathds{1}_1)},
\left[
\textstyle-s-\scriptstyle\frac12
\right]_{(\mathds{1}_1)}
\right).
\end{aligned}
\]
By Case~\textup{(5)}, $\theta_1$ does not admit a generalized
linear period with respect to $(\Ha_{1,3},\chi_s)$, whereas
Case~\textup{(7)} shows that $\theta_2$ does admit such a period.

In the case where $s=0$, $\pi$ has the following Jordan--Hölder constituents:
\[
\begin{aligned}
\theta_1
&=
\mathcal{Q}\left(
\left[
\textstyle-s-\scriptstyle\frac12,~
\textstyle-s+\scriptstyle\frac12
\right]_{(\mathds{1}_1)}
\right)
\times
\mathcal{Q}\left(
\left[
\textstyle-s-\scriptstyle\frac12,~
\textstyle-s+\scriptstyle\frac12
\right]_{(\mathds{1}_1)}
\right),\\
\theta_2
&=
\mathcal{Q}\left(
\left[
\textstyle-s+\scriptstyle\frac12
\right]_{(\mathds{1}_1)},
\left[
\textstyle-s-\scriptstyle\frac12
\right]_{(\mathds{1}_1)}
\right)
\times
\mathcal{Q}\left(
\left[
\textstyle-s-\scriptstyle\frac12,~
\textstyle-s+\scriptstyle\frac12
\right]_{(\mathds{1}_1)}
\right),\\
\theta_3
&=
\mathcal{Q}\left(
\left[
\textstyle-s+\scriptstyle\frac12
\right]_{(\mathds{1}_1)},
\left[
\textstyle-s-\scriptstyle\frac12
\right]_{(\mathds{1}_1)}
\right)
\times
\mathcal{Q}\left(
\left[
\textstyle-s+\scriptstyle\frac12
\right]_{(\mathds{1}_1)},
\left[
\textstyle-s-\scriptstyle\frac12
\right]_{(\mathds{1}_1)}
\right).
\end{aligned}
\]
The question of whether these representations admit a generalized linear period with respect to $(\Ha_{1,3},\chi_s)$ has already been addressed in the preceding cases.
If $s=1$, then $\pi$ has the following Jordan--Hölder constituents:
\[
\begin{aligned}
\theta_1
&=
\mathcal{Q}\left(
\left[
\textstyle-s-\scriptstyle\frac12,~
\textstyle-s+\scriptstyle\frac32
\right]_{(\mathds{1}_1)},
\left[
\textstyle-s+\scriptstyle\frac12
\right]_{(\mathds{1}_1)}
\right),\\
\theta_2
&=
\mathcal{Q}\left(
\left[
\textstyle-s+\scriptstyle\frac12,~
\textstyle-s+\scriptstyle\frac32
\right]_{(\mathds{1}_1)},
\left[
\textstyle-s-\scriptstyle\frac12,~
\textstyle-s+\scriptstyle\frac12
\right]_{(\mathds{1}_1)}
\right),\\
\theta_3
&=
\mathcal{Q}\left(
\left[
\textstyle-s+\scriptstyle\frac12
\right]_{(\mathds{1}_1)},
\left[
\textstyle-s-\scriptstyle\frac12,~
\textstyle-s+\scriptstyle\frac12
\right]_{(\mathds{1}_1)},
\left[
\textstyle-s+\scriptstyle\frac32
\right]_{(\mathds{1}_1)}
\right),\\
\theta_4
&=
\mathcal{Q}\left(
\left[
\textstyle-s+\scriptstyle\frac12
\right]_{(\mathds{1}_1)},
\left[
\textstyle-s+\scriptstyle\frac12
\right]_{(\mathds{1}_1)},
\left[
\textstyle-s+\scriptstyle\frac32
\right]_{(\mathds{1}_1)},
\left[
\textstyle-s-\scriptstyle\frac12
\right]_{(\mathds{1}_1)}
\right).
\end{aligned}
\]

The question of whether these representations admit a generalized linear period with respect to $(\Ha_{1,3},\chi_s)$  has already been discussed.

The remaining possibilities
$\sigma\in
\left\{
\nu_{\mathds{1}_1}^{\pm\frac12},
\nu_{\mathds{1}_1}^{s-\frac32},
\nu_{\mathds{1}_1}^{s+\frac12}
\right\}$
are treated in the same manner and yield no additional Jordan--Hölder constituents admitting a generalized linear period  with respect to $(\Ha_{1,3},\chi_s)$, up to taking the contragredient and replacing $s$ by $-s$.

Combining Cases~\textup{(1)}--\textup{(8)}, we obtain exactly the three families
listed in the statement. Conversely, each representation in those families
has been shown above to admit a generalized linear period with respect to
$(\Ha_{1,3},\chi_s)$. This proves the theorem and establishes
Conjecture~\ref{conj} for $n=4$.
\end{proof}

\section{Proof of the main theorems} \label{JLTLP}

      In this section, we prove Theorem~\ref{LP} and Theorem \ref{Lapid-Prasad}. We begin by recalling the Jacquet--Langlands transfer and the relevant facts about Langlands parameters that will be used in the proof.

\subsection{Jacquet--Langlands transfer }\label{JLT}

      In this subsection, we briefly recall the Jacquet--Langlands transfer and refer to \cite[\S 2]{AIB08} and \cite[\S 2.3]{Suz20} for details. The Jacquet--Langlands transfer (denoted by $\JL$) gives a bijection between the irreducible essentially square-integrable representations of $\G_n$ and those of $\GL_{2n}(\F)$. If $\rho\in\mathcal{C}(\G_k)$, then there exist a positive integer $l_\rho$ dividing $2k$ and an irreducible supercuspidal representation $\rho'$ of $\GL_{2k/l_\rho}(\F)$ such that  \[\JL(\rho)=\mathcal{Q}\left(\left[-\scriptstyle\frac{l_\rho-1}{2},\scriptstyle\frac{l_\rho-1}{2}\right]_{(\rho')}\right).\]

      For a segment $\Delta=[a,b]_{(\rho)}$ of $\G_n$, define the segment $\JL(\Delta)$ of $\GL_{2n}(\F)$ by $\JL(\mathcal{Q}(\Delta))=\mathcal{Q}(\JL(\Delta))$. Explicitly, $$\JL(\Delta)=\left[\scriptstyle\frac{(2a-1)l_\rho+1}{2},\scriptstyle\frac{(2b+1)l_\rho-1}{2}\right]_{(\rho')}.$$
      Moreover, for any two segments $\Delta$ and $\Delta'$, $\Delta$ precedes $\Delta'$ if and only if $\JL(\Delta)$ precedes $\JL(\Delta')$.
      If $\mathfrak m=\{\Delta_1,\ldots,\Delta_t\}$ is a multisegment and $\pi=\mathcal{Q}(\mathfrak m)$, define $\JL(\mathfrak m)=\{\JL(\Delta_1),\ldots,\JL(\Delta_t)\}$ and
      \begin{equation}\label{JLeqn}\JL(\pi)=\mathcal{Q}\bigl(\JL(\mathfrak m)\bigr).\end{equation} 
      
      \begin{definition}
An irreducible smooth representation $\pi$ of $\G_n$ is called
generic if its Jacquet--Langlands transfer
$\JL(\pi)$ is a generic representation of $\GL_{2n}(\F)$.
\end{definition}

Note that, by the classification of generic representations, $\JL(\pi)$ is generic if and only if the multisegment $\mathfrak m$ is pairwise unlinked.

\subsection{Langlands parameters}\label{LPr}

\noindent
This section summarizes the relevant results (\hspace{-0.01cm}\cite{kudla, DPR}) on the
Langlands parameters of certain irreducible admissible representations
of $\GL_n(\F)$. Let $W_{\F}$ denote the Weil group of $\F$, and let
$W_{\F}'=W_{\F}\times\SL(2,\mathbb{C})$ be the Weil--Deligne group of
$\F$. The local Langlands correspondence gives a canonical bijection
between $\Irr(\GL_n(\F))$ and the $n$-dimensional representations of
$W_{\F}'$. We denote this correspondence by $\mathfrak{L}$ and for
$\pi\in\Irr(\GL_n(\F))$, call $\mathfrak{L}(\pi)$ the
\emph{Langlands parameter} of $\pi$. Thus,
$$\mathfrak{L}(\pi):W_{\F}'\to\GL_n(\mathbb{C})$$ is a continuous
homomorphism such that $\mathfrak{L}(\pi)(W_{\F})$ consists of
semisimple elements and
$\mathfrak{L}(\pi)|_{\SL(2,\mathbb{C})}$ is a morphism of complex
algebraic groups. We say that $\mathfrak{L}(\pi)$ is
\emph{symplectic}, and correspondingly that $\pi$ is of
\emph{symplectic type}, if $\mathfrak{L}(\pi)$ preserves a
nondegenerate symplectic form. Equivalently, the image of
$\mathfrak{L}(\pi)$ is contained in the symplectic group
$\Sp_n(\mathbb{C})\subset\GL_n(\mathbb{C})$. Every $n$-dimensional
representation $\mathfrak{L}$ of $W_{\F}'$ decomposes as
$$
\mathfrak{L}=\bigoplus_{i=1}^{r}\mathfrak{L}_i\otimes\Sp_{r_i},
$$
where each $\mathfrak{L}_i$ is an irreducible representation of $W_{\F}$ of
dimension $n_i$ and $\Sp_{r_i}$ is the unique irreducible
representation of $\SL(2,\mathbb{C})$ of dimension $r_i$ with
$\sum_{i=1}^r n_i r_i=n$. Under the local Langlands correspondence,
$\Sp_{r_i}$ corresponds to the Steinberg representation
$\St_{r_i}$ of $\GL_{r_i}(\F)$. Furthermore,
under the local Langlands correspondence, every irreducible
supercuspidal representation of $\GL_n(\F)$ corresponds to an
irreducible $n$-dimensional representation of $W_{\F}$.

Assuming that the Langlands parameters of supercuspidal
representations are known, we now recall how to determine the
Langlands parameter of an irreducible admissible representation of
$\GL_n(\F)$. This follows immediately from the Langlands
classification. According to the Langlands classification, the
representation $\pi=\mathcal{Q}(\Delta_1,\ldots,\Delta_k)$ is the
unique irreducible quotient of the induced representation
$\mathcal{Q}(\Delta_1)\times\cdots\times\mathcal{Q}(\Delta_k)$,
provided that the segments $\Delta_i$ are ordered so that $\Delta_i$
does not precede $\Delta_j$ for all $i<j$. Suppose that each
$\Delta_i$ is the segment $[0,{l_i-1}]_{(\sigma_i)}$ of length $l_i$, where
$\sigma_i$ is an irreducible supercuspidal representation of
$\GL_{m_i}(\F)$. Then the Langlands parameter of $\pi$ is given by
$$
\mathfrak{L}(\pi)
=
\bigoplus_{i=1}^{k}
\left(\mathfrak{L}(\sigma_i)\otimes\nu^{\frac{l_i-1}{2}}\Sp_{l_i}\right).
$$

We now recall the following lemma concerning the Langlands parameter
of an irreducible parabolically induced representation, which follows
from \cite[Corollary~A.6(ii)]{Tad86}.

\begin{lemma}\label{LPLemma}
Let $\pi_1\in\Irr(\GL_{n_1}(\F))$ and
$\pi_2\in\Irr(\GL_{n_2}(\F))$. If $\pi_1\times\pi_2$ is irreducible, then
\[
\mathfrak{L}(\pi_1\times\pi_2)
=
\mathfrak{L}(\pi_1)\oplus\mathfrak{L}(\pi_2).
\]
\end{lemma}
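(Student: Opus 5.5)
The plan is to reduce the statement to a statement about multisegments and then read off the parameter from the Langlands classification recalled in \S\ref{LPr}. Write $\pi_1=\mathcal{Q}(\mathfrak m_1)$ and $\pi_2=\mathcal{Q}(\mathfrak m_2)$ with $\mathfrak m_1,\mathfrak m_2$ multisegments for $\GL_{n_1}(\F)$ and $\GL_{n_2}(\F)$. Let $\mathfrak m=\mathfrak m_1+\mathfrak m_2$ be their multiset union. The formula for $\mathfrak{L}(\mathcal{Q}(\Delta_1,\ldots,\Delta_k))$ is a direct sum over the segments. Hence
\[
\mathfrak{L}(\mathcal{Q}(\mathfrak m))=\mathfrak{L}(\pi_1)\oplus\mathfrak{L}(\pi_2),
\]
and it suffices to prove that $\pi_1\times\pi_2\simeq\mathcal{Q}(\mathfrak m)$ whenever $\pi_1\times\pi_2$ is irreducible.

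Since $\pi_1\times\pi_2$ is irreducible, it is enough to show that $\mathcal{Q}(\mathfrak m)$ occurs as a Jordan--Hölder constituent of $\pi_1\times\pi_2$. I would do this in three steps.

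\textbf{Step 1.} By exactness of parabolic induction, $\pi_1\times\pi_2$ is a quotient of $\pi(\mathfrak m_1)\times\pi(\mathfrak m_2)$. The latter is a product of the $\mathcal{Q}(\Delta)$, $\Delta\in\mathfrak m$, taken in some order. It therefore has the same Jordan--Hölder constituents as the standard module $\pi(\mathfrak m)$. By the $\GL_n(\F)$-analogue of Lemma~\ref{l2}, every constituent is of the form $\mathcal{Q}(\mathfrak b)$ with $\mathfrak b\le\mathfrak m$. Moreover, $\mathcal{Q}(\mathfrak m)$ occurs with multiplicity one.

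\textbf{Step 2.} I would detect $\mathcal{Q}(\mathfrak m)$ by a Jacquet module exponent. Take the minimal Jacquet module along the standard parabolic attached to the ordering of the segments. The "Langlands exponent" of $\mathcal{Q}(\mathfrak m)$ is the cuspidal character obtained by listing each segment from its top end downward, with the segments in the standard order. Among constituents of $\pi(\mathfrak m)$, only $\mathcal{Q}(\mathfrak m)$ contains this exponent, because the elementary operations defining $\mathfrak b<\mathfrak m$ destroy it.

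\textbf{Step 3.} Using the geometric lemma (Bernstein--Zelevinsky), the Jacquet module of $\pi_1\times\pi_2$ contains every shuffle of an exponent of $\pi_1$ with an exponent of $\pi_2$. In particular it contains the shuffle of the Langlands exponents of $\pi_1$ and $\pi_2$ that produces the Langlands exponent of $\mathfrak m$. Hence $\mathcal{Q}(\mathfrak m)$ is a constituent of $\pi_1\times\pi_2$.

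Irreducibility then forces $\pi_1\times\pi_2\simeq\mathcal{Q}(\mathfrak m)$, and the formula for the Langlands parameter gives the claim.

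The main obstacle is Step 2: making rigorous that the chosen exponent singles out $\mathcal{Q}(\mathfrak m)$ among all $\mathcal{Q}(\mathfrak b)$ with $\mathfrak b\le\mathfrak m$. If a direct combinatorial check becomes unwieldy, I would fall back on a cleaner route. Apply the Zelevinsky involution, which commutes with products in the Grothendieck group and exchanges the $\mathcal{Q}$- and $\mathcal{S}$-parametrizations. This reduces the claim to the analogous statement for $\mathcal{S}(\mathfrak m_1)\times\mathcal{S}(\mathfrak m_2)$. There it is the standard fact that $\mathcal{S}(\mathfrak m_1+\mathfrak m_2)$ is a constituent of $\mathcal{S}(\mathfrak m_1)\times\mathcal{S}(\mathfrak m_2)$, which is \cite[Corollary~A.6(ii)]{Tad86} and Zelevinsky's theorem.
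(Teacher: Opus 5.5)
\textbf{Verdict: there is a genuine gap, in Step 2, and Step 2 is where all the content of the lemma sits.}

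\textbf{What is fine.} The reduction to showing $\pi_1\times\pi_2\simeq\mathcal{Q}(\mathfrak m_1+\mathfrak m_2)$ is sound, and the segment-by-segment formula for $\mathfrak{L}$ then gives the claim. Steps 1 and 3 are correct in outline. The paper gives no argument at all beyond citing Tadi\'c's Corollary~A.6(ii), and your outline is the natural way to unpack that citation.

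\textbf{The gap in Step 2.} Suppose ``standard order'' means the order used to build $\pi(\mathfrak m)$, with the Jacquet module taken along the upper parabolic. Then by Frobenius reciprocity your exponent lies in the Jacquet module of the irreducible \emph{subrepresentations} of $\pi(\mathfrak m)$, not in that of its Langlands quotient. A concrete example is $\GL_2(\F)$ with $\mathfrak m=\{[\frac12]_{(\mathds{1})},[-\frac12]_{(\mathds{1})}\}$:
\begin{itemize}
\item $\pi(\mathfrak m)=\nu^{1/2}\times\nu^{-1/2}$, and $\mathcal{Q}(\mathfrak m)$ is the trivial character, whose Jacquet module is $\nu^{-1/2}\otimes\nu^{1/2}$;
\item your exponent $\nu^{1/2}\otimes\nu^{-1/2}$ occurs only in $\St_2$.
\end{itemize}
So as stated the uniqueness claim singles out the wrong constituent. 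Step 3 needs the same orientation, since otherwise $e(\mathfrak m_k)$ need not be an exponent of $\pi_k$. Beyond the orientation, ``elementary operations destroy it'' is an assertion, not an argument.

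\textbf{The repair.}
\begin{itemize}
\item \emph{Choice of order.} List the segments $\Delta_1,\dots,\Delta_k$ of $\mathfrak m$ so that on each cuspidal line their beginnings are nondecreasing. This is the reverse of an admissible Langlands order, so $\mathcal{Q}(\mathfrak m)\hookrightarrow\mathcal{Q}(\Delta_1)\times\cdots\times\mathcal{Q}(\Delta_k)$. Hence the concatenation $e(\mathfrak m)$ of the top-down sequences of the $\Delta_t$ lies in the Jacquet module of $\mathcal{Q}(\mathfrak m)$. The restricted orders on $\mathfrak m_1$ and $\mathfrak m_2$ are of the same type, which makes Step 3 work.
\item \emph{Elementary operations raise a rank.} On a fixed line, let $r_{ij}(\mathfrak c)$ be the number of segments of $\mathfrak c$ containing $[i,j]$. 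An elementary operation on linked $[a,b],[a',b']$ with $a<a'$ and $b<b'$ changes $r_{ij}$ by $(y-x)(v-u)\geq 0$. Here $y,x,v,u\in\{0,1\}$ record whether $a\le i$, $a'\le i$, $j\le b'$, $j\le b$ respectively. Since the $r_{ij}$ determine a multisegment, $\mathfrak b<\mathfrak m$ forces $r_{ij}(\mathfrak b)>r_{ij}(\mathfrak m)$ for some $(i,j)$.
\item \emph{The exponent bounds the ranks.} Suppose $e(\mathfrak m)$ occurs in the Jacquet module of $\mathcal{Q}(\mathfrak b)$, hence of $\pi(\mathfrak b)$. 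Then it is a shuffle of the top-down sequences of the segments of $\mathfrak b$. A segment containing $[i,j]$ contributes a subsequence $\nu^j\rho,\dots,\nu^i\rho$ of $e(\mathfrak m)$. If its first term lies in the block of $\Delta_t$, every later term lies in a block whose beginning is at least that of $\Delta_t$, so $[i,j]\subseteq\Delta_t$. Each block contains $\nu^j\rho$ at most once, so distinct segments use distinct blocks and $r_{ij}(\mathfrak b)\le r_{ij}(\mathfrak m)$. Combined with the previous point and Step 1 ($\mathfrak b\le\mathfrak m$), this gives $\mathfrak b=\mathfrak m$.
\end{itemize}

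\textbf{Your fallback.} The route through the Zelevinsky involution is valid. However, it only trades the $\mathcal{Q}$-side subquotient statement for the $\mathcal{S}$-side one, plus the M\oe glin--Waldspurger/Aubert theorem that the involution preserves irreducibility. It is therefore no more self-contained than the paper's one-line citation.
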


We next recall the Langlands parameter associated with an irreducible representation of $\G_n$ via the Jacquet--Langlands correspondence, as defined in \cite[Definition 2.1]{anandavardhanan2024sign}.

\begin{definition}\label{def:LP}
 The Langlands parameter of an irreducible smooth representation $\pi$ of $\G_n$ is defined by 
\[
\mathfrak{L}(\pi):=\mathfrak{L}(\JL(\pi)),
\]
where $\mathfrak{L}(\JL(\pi))$ denotes the Langlands parameter of $\JL(\pi)$ under the local Langlands correspondence for $\GL_{2n}(\F)$.
\end{definition}

\subsection{Proof of Theorem~\ref{LP}}




\begin{proof}
Throughout the proof, the group on which a power of $\nu$ acts is
determined by the context. We use $\mathds{1}_1$ for the trivial
representation of $\G_1$ and $\mathds{1}$ for the trivial character of
$\GL_1(\F)$.

We first record the relation between multisegments, the
Jacquet--Langlands transfer, and Langlands parameters that will be
used repeatedly.

\medskip

\noindent\textbf{The multisegment-to-parameter calculation.}
For $t\in\mathbb R$, define the multisegment
$$
\mathfrak c_m(t)
=
\left\{
\left[
\frac t2+\frac{m-1}{2}-j
\right]_{(\mathds{1}_1)}
:
0\leq j\leq m-1
\right\}.
$$

The character $\nu^t$ of $\G_m$ is given in the Langlands
classification by
$
\nu^t
=
\mathcal Q\bigl(\mathfrak c_m(t)\bigr).
$
Indeed, since $\nu_{\mathds{1}_1}=\nu^2$, twisting the multisegment of
$\mathds{1}_m$ by $\nu^t$ shifts each exponent by $\frac t2$.
For the cuspidal representation $\mathds{1}_1$ of $\G_1$, we have
$l_{\mathds{1}_1}=2$ and
$$
\JL(\mathds{1}_1)
=
\mathcal Q
\left(
\left[-\frac12,\frac12\right]_{(\mathds{1})}
\right).
$$
Consequently, for a segment $[a,b]_{(\mathds{1}_1)}$, the
Jacquet--Langlands transfer is
$$
\JL\left([a,b]_{(\mathds{1}_1)}\right)
=
\left[
2a-\frac12,\,
2b+\frac12
\right]_{(\mathds{1})}.
$$
The segment on the right has length
$
2(b-a+1)
$
and center $a+b$. Therefore, under the local Langlands
correspondence for the split general linear group,
\begin{equation}\label{5.1}
\mathfrak L
\left(
\mathcal Q\left([a,b]_{(\mathds{1}_1)}\right)
\right)
\simeq
\nu^{a+b}\Sp_{2(b-a+1)}.
\tag{5.1}  
\end{equation}
In particular, for a singleton segment $[a]_{(\mathds{1}_1)}$,
\begin{equation}\label{5.2}
\mathfrak L
\left(
\mathcal Q\left([a]_{(\mathds{1}_1)}\right)
\right)
\simeq
\nu^{2a}\Sp_2.
\tag{5.2}
\end{equation}

More generally, if
$
\mathfrak m
=
\left\{
[a_1,b_1]_{(\mathds{1}_1)},\ldots,
[a_r,b_r]_{(\mathds{1}_1)}
\right\},
$
then the Jacquet--Langlands transfer is obtained by applying $\JL$
to each segment:
$$
\JL\bigl(\mathcal Q(\mathfrak m)\bigr)
=
\mathcal Q
\left(
\left\{
\left[
2a_i-\frac12,\,
2b_i+\frac12
\right]_{(\mathds{1})}
\right\}_{i=1}^{r}
\right).
$$
It follows from the Langlands classification for $\GL_{2n}(\F)$ that
\begin{equation}\label{5.3}
\mathfrak L\bigl(\mathcal Q(\mathfrak m)\bigr)
\simeq
\bigoplus_{i=1}^{r}
\nu^{a_i+b_i}\Sp_{2(b_i-a_i+1)}.
\tag{5.3}
\end{equation}
Applying this formula to $\mathfrak c_m(t)$ gives
$
\mathfrak L(\nu^t)
\simeq
\bigoplus_{j=0}^{m-1}
\nu^{t+m-1-2j}\Sp_2.
$
In particular, the character $\nu^{-2s}$ of $\G_{n-2}$ corresponds
to the multisegment
$$
\mathfrak a_s
=
\mathfrak c_{n-2}(-2s)
=
\left\{
\left[
-s+\frac{n-3}{2}-j
\right]_{(\mathds{1}_1)}
:
0\leq j\leq n-3
\right\},
$$
and its Langlands parameter is
\begin{equation}\label{5.4}
\mathfrak L(\nu^{-2s})
\simeq
\bigoplus_{j=0}^{n-3}
\nu^{-2s+n-3-2j}\Sp_2.
\tag{5.4}
\end{equation}
Since every summand in \eqref{5.4} has dimension $2$, this parameter has
dimension $2n-4$.

We now prove the two implications.

\medskip

\noindent\textbf{Forward implication.}
Assume that $\pi$ admits a generalized linear period with respect to
$(\Ha_{1,n-1},\chi_s)$. By Conjecture~\ref{conj}, the
representation $\pi$ belongs to one of the families appearing in
parts~\textup{(1)}--\textup{(3)}. We consider all possibilities.

\medskip

\noindent\emph{Case 1: $\pi=\mathds{1}_n$ and $s=0$.}
At $s=0$, the unique irreducible subrepresentations in
Conjecture~\ref{conj}\textup{(2)} and
Conjecture~\ref{conj}\textup{(3)} are both isomorphic to
$\mathds{1}_n$. Its Langlands multisegment is
$$
\mathfrak c_n(0)
=
\left\{
\left[
\frac{n-1}{2}-j
\right]_{(\mathds{1}_1)}
:
0\leq j\leq n-1
\right\},
$$
so that
$
\mathds{1}_n
=
\mathcal Q\bigl(\mathfrak c_n(0)\bigr).
$
Applying the Jacquet--Langlands transfer gives
$$
\JL(\mathds{1}_n)
=
\mathcal Q
\left(
\left\{
\left[
n-\frac32-2j,\,
n-\frac12-2j
\right]_{(\mathds{1})}
:
0\leq j\leq n-1
\right\}
\right).
$$
Hence, by \eqref{5.3},
$
\mathfrak L(\mathds{1}_n)
\simeq
\bigoplus_{j=0}^{n-1}
\nu^{n-1-2j}\Sp_2.
$
Equivalently,
$$
\mathfrak L(\mathds{1}_n)
\simeq
\nu^{n-1}\Sp_2
\oplus
\nu^{n-3}\Sp_2
\oplus\cdots\oplus
\nu^{-n+3}\Sp_2
\oplus
\nu^{-n+1}\Sp_2.
$$
The middle $n-2$ summands form
$
\mathfrak L(\mathds{1}_{n-2})
=
\mathfrak L(\nu^{-2s}),
$
because $s=0$. Therefore,
\begin{equation}\label{5.5}
\mathfrak L(\pi)/\mathfrak L(\nu^{-2s})
\simeq
\nu^{n-1}\Sp_2
\oplus
\nu^{-n+1}\Sp_2.
\tag{5.5}
\end{equation}

Now set
$
\tau_0
=
\nu^{n-1}
\times
\nu^{-n+1}.
$
The multisegment of $\tau_0$ is
$
\left\{
\left[\frac{n-1}{2}\right]_{(\mathds{1}_1)},
\left[-\frac{n-1}{2}\right]_{(\mathds{1}_1)}
\right\}.
$
Thus, \eqref{5.2} gives
$$
\mathfrak L(\tau_0)
\simeq
\nu^{n-1}\Sp_2
\oplus
\nu^{-n+1}\Sp_2.
$$
Since $n>2$, the two singleton segments are not linked, and
$\tau_0$ is irreducible and infinite-dimensional. It is
$\Ha_{1,1}$-distinguished because it is of the form
$\sigma\times\widetilde{\sigma}$. It follows from \eqref{5.5} that
$$
\mathfrak L(\pi)/\mathfrak L(\nu^{-2s})
\simeq
\mathfrak L(\tau_0).
$$
Thus, at $s=0$, the four-dimensional quotient belongs to the second
alternative of the theorem.

\medskip

\noindent\emph{Case 2:
$\pi=\nu^{-2s}\times\tau$, where $\tau\in\Irr(\G_2)$ be infinite-dimensional and
$\Ha_{1,1}$-distinguished.} Write
$
\tau=\mathcal Q(\mathfrak b)
$
for its Langlands multisegment $\mathfrak b$. Since $\pi$ irreducible,
$
\pi
=
\mathcal Q(\mathfrak a_s\cup\mathfrak b).
$
The Jacquet--Langlands transfer is
$
\JL(\pi)
=
\mathcal Q
\left(
\JL(\mathfrak a_s)\cup\JL(\mathfrak b)
\right).
$
Applying the split local Langlands correspondence and using \eqref{5.3}
gives
$$
\mathfrak L(\pi)
\simeq
\mathfrak L(\nu^{-2s})
\oplus
\mathfrak L(\tau).
$$
Therefore,
$$
\mathfrak L(\pi)/\mathfrak L(\nu^{-2s})
\simeq
\mathfrak L(\tau),
$$
which is the second alternative of the theorem.

\medskip

\noindent\emph{Case 3: 
$\pi
=
\nu_{\mathds{1}_1}^{s+\frac{n-1}{2}}
\times
\nu^{-2s-1}~\text{ with}~
s\notin\left\{-\frac n2,0\right\}$.}
The first factor has multisegment
$
\left\{
\left[
s+\frac{n-1}{2}
\right]_{(\mathds{1}_1)}
\right\}.
$
The second factor is a character of $\G_{n-1}$ and has
multisegment
$$
\mathfrak c_{n-1}(-2s-1)
=
\left\{
\left[
-s+\frac{n-3}{2}-j
\right]_{(\mathds{1}_1)}
:
0\leq j\leq n-2
\right\}.
$$
Consequently, the multisegment of $\pi$ is
\begin{equation}\label{5.6}
\mathfrak m_s^{+}
=
\left\{
\left[
s+\frac{n-1}{2}
\right]_{(\mathds{1}_1)}
\right\}
\cup
\left\{
\left[
-s+\frac{n-3}{2}-j
\right]_{(\mathds{1}_1)}
:
0\leq j\leq n-2
\right\}.
\tag{5.6}
\end{equation}
Applying $\JL$ to each singleton segment gives
$$
\begin{aligned}
\JL(\pi)
=
\mathcal Q\Bigg(
&
\left[
2s+n-\frac32,\,
2s+n-\frac12
\right]_{(\mathds{1})},\\
&
\left\{
\left[
-2s+n-\frac72-2j,\,
-2s+n-\frac52-2j
\right]_{(\mathds{1})}
:
0\leq j\leq n-2
\right\}
\Bigg).
\end{aligned}
$$
Therefore,
$$
\begin{aligned}
\mathfrak L(\pi)
\simeq{}
\nu^{2s+n-1}\Sp_2
\oplus
\nu^{-2s+n-3}\Sp_2
\oplus\cdots\oplus
\nu^{-2s-n+3}\Sp_2
\oplus
\nu^{-2s-n+1}\Sp_2.
\end{aligned}
$$
The summands
$
\nu^{-2s+n-3}\Sp_2
\oplus\cdots\oplus
\nu^{-2s-n+3}\Sp_2
$
form $\mathfrak L(\nu^{-2s})$. Hence,
\begin{equation}\label{5.7}
\mathfrak L(\pi)/\mathfrak L(\nu^{-2s})
\simeq
\nu^{2s+n-1}\Sp_2
\oplus
\nu^{-2s-n+1}\Sp_2.
\tag{5.7}
\end{equation}

Set
$$
\tau_s^{+}
=
\nu_{\mathds{1}_1}^{s+\frac{n-1}{2}}
\times
\nu_{\mathds{1}_1}^{-\left(s+\frac{n-1}{2}\right)}.
$$
Its multisegment consists of the two singleton segments
$
\left[
s+\frac{n-1}{2}
\right]_{(\mathds{1}_1)}
~\text{and}~
\left[
-s-\frac{n-1}{2}
\right]_{(\mathds{1}_1)}.
$
Thus,
$$
\mathfrak L(\tau_s^{+})
\simeq
\nu^{2s+n-1}\Sp_2
\oplus
\nu^{-2s-n+1}\Sp_2.
$$

If $s=-\frac{n-2}{2}$, then
$$
\mathfrak L(\pi)/\mathfrak L(\nu^{-2s})
\simeq
\nu\Sp_2\oplus\nu^{-1}\Sp_2.
$$
The trivial representation $\mathds{1}_2$ has Langlands
multisegment
$
\left\{
\left[\frac12\right]_{(\mathds{1}_1)},
\left[-\frac12\right]_{(\mathds{1}_1)}
\right\},
$
and therefore
$$
\mathfrak L(\mathds{1}_2)
\simeq
\nu\Sp_2\oplus\nu^{-1}\Sp_2.
$$
Thus, at $s=-\frac{n-2}{2}$, the quotient in \eqref{5.7} is
$\mathfrak L(\mathds{1}_2)$.

For all other values occurring in this case, the two segments
defining $\tau_s^{+}$ are not linked. Hence, $\tau_s^{+}$ is an
irreducible infinite-dimensional representation. Since it is of the
form $\sigma\times\widetilde{\sigma}$, it is
$\Ha_{1,1}$-distinguished. Therefore, the quotient in \eqref{5.7} is
$\mathfrak L(\tau_s^{+})$.

\medskip

\noindent\emph{Case 4:
$
\pi
=
\nu_{\mathds{1}_1}^{s-\frac{n-1}{2}}
\times
\nu^{-2s+1}~\text{with}~
s\notin\left\{0,\frac n2\right\}.
$}
Its multisegment is
\begin{equation}\label{5.8}
\mathfrak m_s^{-}
=
\left\{
\left[
s-\frac{n-1}{2}
\right]_{(\mathds{1}_1)}
\right\}
\cup
\left\{
\left[
-s+\frac{n-1}{2}-j
\right]_{(\mathds{1}_1)}
:
0\leq j\leq n-2
\right\}.
\tag{5.8}
\end{equation}
Applying the Jacquet--Langlands transfer gives
$$
\begin{aligned}
\JL(\pi)
=
\mathcal Q\Bigg(
&
\left[
2s-n+\frac12,\,
2s-n+\frac32
\right]_{(\mathds{1})},\\
&
\left\{
\left[
-2s+n-\frac32-2j,\,
-2s+n-\frac12-2j
\right]_{(\mathds{1})}
:
0\leq j\leq n-2
\right\}
\Bigg).
\end{aligned}
$$
It follows that
$$
\begin{aligned}
\mathfrak L(\pi)
\simeq{}
\nu^{2s-n+1}\Sp_2
\oplus
\nu^{-2s+n-1}\Sp_2
\oplus
\nu^{-2s+n-3}\Sp_2
\oplus\cdots\oplus
\nu^{-2s-n+3}\Sp_2.
\end{aligned}
$$
Removing the summands constituting $\mathfrak L(\nu^{-2s})$, we
obtain
$$
\mathfrak L(\pi)/\mathfrak L(\nu^{-2s})
\simeq
\nu^{2s-n+1}\Sp_2
\oplus
\nu^{-2s+n-1}\Sp_2.
$$

Set
$
\tau_s^{-}
=
\nu_{\mathds{1}_1}^{s-\frac{n-1}{2}}
\times
\nu_{\mathds{1}_1}^{-\left(s-\frac{n-1}{2}\right)}.
$
By \eqref{5.2}, we have
$
\mathfrak L(\tau_s^{-})
\simeq
\nu^{2s-n+1}\Sp_2
\oplus
\nu^{-2s+n-1}\Sp_2.
$

If $s=\frac{n-2}{2}$, then

$$
\mathfrak L(\pi)/\mathfrak L(\nu^{-2s})
\simeq
\nu^{-1}\Sp_2\oplus\nu\Sp_2
\simeq
\mathfrak L(\mathds{1}_2).
$$
For every other value occurring in this case, $\tau_s^{-}$ is an
irreducible infinite-dimensional representation of the form
$\sigma\times\widetilde{\sigma}$ and is therefore
$\Ha_{1,1}$-distinguished.

\medskip

\noindent\emph{Case 5:
$\pi=\nu^{-2s}\mathcal Q_n$ and $s=\frac n2$.}
From Remark~\ref{LnZn}, the Langlands multisegment of $\mathcal Q_n$
is
$$
\left\{
\left[-\frac{n-3}{2}\right]_{(\mathds{1}_1)},
\dots,
\left[\frac{n-3}{2}\right]_{(\mathds{1}_1)},
\left[
\frac{n-1}{2},\frac{n+1}{2}
\right]_{(\mathds{1}_1)}
\right\}.
$$
Twisting by $\nu^{-2s}$ shifts every endpoint by $-s$. Thus,
\begin{equation}\label{5.9}
\begin{aligned}
\mathfrak q_s^{+}
=
\Bigg\{
\left[-s-\frac{n-3}{2}\right]_{(\mathds{1}_1)},
\dots,
\left[-s+\frac{n-3}{2}\right]_{(\mathds{1}_1)},
\left[
-s+\frac{n-1}{2},
-s+\frac{n+1}{2}
\right]_{(\mathds{1}_1)}
\Bigg\}
\end{aligned}
\tag{5.9}
\end{equation}
is the multisegment of $\pi$.

The singleton segments in \eqref{5.9} contribute
$$
\bigoplus_{j=0}^{n-3}
\nu^{-2s+n-3-2j}\Sp_2
=
\mathfrak L(\nu^{-2s}).
$$
For the final segment in \eqref{5.9}, formula \eqref{5.1} gives
$
\nu^{-2s+n}\Sp_4.
$
Since $s=\frac n2$, this is $\Sp_4$. Hence,
$$
\mathfrak L(\pi)
\simeq
\mathfrak L(\nu^{-2s})\oplus\Sp_4.
$$
Therefore,
$$
\mathfrak L(\pi)/\mathfrak L(\nu^{-2s})
\simeq
\Sp_4
\simeq
\mathfrak L(\St_2).
$$
The representation $\St_2$ is infinite-dimensional and
$\Ha_{1,1}$-distinguished.

\medskip

\noindent\emph{Case 6:
$\pi=\nu^{-2s}\widetilde{\mathcal Q}_n$ and
$s=-\frac n2$.}
By Remark~\ref{LnZn}, the Langlands multisegment of $\widetilde{\mathcal Q}_n$ is
$$
\left\{
\left[-\frac{n-3}{2}\right]_{(\mathds{1}_1)},
\dots,
\left[\frac{n-3}{2}\right]_{(\mathds{1}_1)},
\left[
-\frac{n+1}{2},-\frac{n-1}{2}
\right]_{(\mathds{1}_1)}
\right\}.
$$
After twisting by $\nu^{-2s}$, we obtain
$$
\begin{aligned}
\mathfrak q_s^{-}
=
\Bigg\{
\left[-s-\frac{n-3}{2}\right]_{(\mathds{1}_1)},
\dots,
\left[-s+\frac{n-3}{2}\right]_{(\mathds{1}_1)},
\left[
-s-\frac{n+1}{2},
-s-\frac{n-1}{2}
\right]_{(\mathds{1}_1)}
\Bigg\}.
\end{aligned}$$
The singleton segments again contribute
$\mathfrak L(\nu^{-2s})$. The last segment contributes
$\nu^{-2s-n}\Sp_4.$
Since $s=-\frac n2$, this equals $\Sp_4$. Therefore,
$
\mathfrak L(\pi)
\simeq
\mathfrak L(\nu^{-2s})\oplus\Sp_4,
$
and hence
$$
\mathfrak L(\pi)/\mathfrak L(\nu^{-2s})
\simeq
\Sp_4
\simeq
\mathfrak L(\St_2).
$$

This proves the forward implication.

\medskip

\noindent\textbf{Converse implication.}
Assume now that $\mathfrak L(\pi)$ contains a subrepresentation
isomorphic to $\mathfrak L(\nu^{-2s})$ and that the corresponding
four-dimensional quotient satisfies one of the two alternatives in
the statement.
The map
\[
\Irr(\G_n)
\longrightarrow
\left\{\text{$2n$-dimensional Weil--Deligne parameters}\right\},
\]
\[
\pi\longmapsto\mathfrak L(\JL(\pi))
\]
is injective. Indeed, $\JL$ is defined injectively on Langlands
multisegments, and the local Langlands correspondence for
$\GL_{2n}(\F)$ is a bijection. Consequently, once we identify an
irreducible representation having the prescribed parameter, it must
be isomorphic to $\pi$.
We treat the two possible four-dimensional quotients separately.

\medskip

\noindent\emph{Converse Case A: the quotient is
$\mathfrak L(\mathds{1}_2)$.}
We have
$
\mathfrak L(\mathds{1}_2)
\simeq
\nu\Sp_2\oplus\nu^{-1}\Sp_2.
$
By the hypothesis of the theorem, either
$s=-\frac{n-2}{2}$ or $s=\frac{n-2}{2}$.

Suppose first that
$
s=-\frac{n-2}{2}.
$
Consider the multisegment $\mathfrak m_s^{+}$ defined in \eqref{5.6}. The multisegment
$$
\left\{
\left[
-s+\frac{n-3}{2}-j
\right]_{(\mathds{1}_1)}
:
0\leq j\leq n-3
\right\}
$$
is precisely $\mathfrak a_s$. The two remaining singleton segments
are
$$
\left[
s+\frac{n-1}{2}
\right]_{(\mathds{1}_1)}
=
\left[\frac12\right]_{(\mathds{1}_1)}
~\text{
and}~
\left[
-s-\frac{n-1}{2}
\right]_{(\mathds{1}_1)}
=
\left[-\frac12\right]_{(\mathds{1}_1)}.
$$
By \eqref{5.2}, these contribute
$\nu\Sp_2\oplus\nu^{-1}\Sp_2$. Hence,
$
\mathfrak L\bigl(\mathcal Q(\mathfrak m_s^{+})\bigr)
\simeq
\mathfrak L(\nu^{-2s})
\oplus
\mathfrak L(\mathds{1}_2).
$
On the other hand,
$$
\mathcal Q(\mathfrak m_s^{+})
\simeq
\nu_{\mathds{1}_1}^{s+\frac{n-1}{2}}
\times
\nu^{-2s-1}.
$$
At $s=-\frac{n-2}{2}$, this induced representation is irreducible,
because this value is different from $-\frac n2$ and $0$.
Injectivity of the parameterization therefore gives
$$
\pi
\simeq
\nu_{\mathds{1}_1}^{s+\frac{n-1}{2}}
\times
\nu^{-2s-1}.
$$
This is the representation in Conjecture~\ref{conj}\textup{(2)}.
Hence, it admits a generalized linear period with respect to
$(\Ha_{1,n-1},\chi_s)$.

Now suppose that
$
s=\frac{n-2}{2}.
$
Consider the multisegment $\mathfrak m_s^{-}$ defined in \eqref{5.8}. It
contains $\mathfrak a_s$, and its two remaining singleton segments
are
$
\left[
s-\frac{n-1}{2}
\right]_{(\mathds{1}_1)}
=
\left[-\frac12\right]_{(\mathds{1}_1)}
$
and
$
\left[
-s+\frac{n-1}{2}
\right]_{(\mathds{1}_1)}
=
\left[\frac12\right]_{(\mathds{1}_1)}.
$
Therefore,
$
\mathfrak L\bigl(\mathcal Q(\mathfrak m_s^{-})\bigr)
\simeq
\mathfrak L(\nu^{-2s})
\oplus
\mathfrak L(\mathds{1}_2).
$
Moreover,
$$
\mathcal Q(\mathfrak m_s^{-})
\simeq
\nu_{\mathds{1}_1}^{s-\frac{n-1}{2}}
\times
\nu^{-2s+1}.
$$
This induced representation is irreducible because
$s\notin\{0,\frac n2\}$. Injectivity now implies that
$$
\pi
\simeq
\nu_{\mathds{1}_1}^{s-\frac{n-1}{2}}
\times
\nu^{-2s+1}.
$$
This representation occurs in Conjecture~\ref{conj}\textup{(3)} and
therefore admits the required generalized linear period.
This proves the converse when the quotient is
$\mathfrak L(\mathds{1}_2)$.

\medskip

\noindent\emph{Converse Case B: the quotient is
$\mathfrak L(\tau)$.}
Suppose that
\begin{equation}\label{5.10}
\mathfrak L(\pi)
\simeq
\mathfrak L(\nu^{-2s})
\oplus
\mathfrak L(\tau),
\tag{5.10}
\end{equation}
where $\tau\in\Irr(\G_2)$ is infinite-dimensional and
$\Ha_{1,1}$-distinguished. Write
$
\tau=\mathcal Q(\mathfrak b).
$
By \eqref{5.3},
$
\mathfrak L\bigl(\mathcal Q(\mathfrak a_s\cup\mathfrak b)\bigr)
\simeq
\mathfrak L(\nu^{-2s})
\oplus
\mathfrak L(\tau).
$
The injectivity of the Langlands parameterization therefore gives
\begin{equation}\label{5.11}
\pi
\simeq
\mathcal Q(\mathfrak a_s\cup\mathfrak b).
\tag{5.11}
\end{equation}

If $\nu^{-2s}\times\tau$ is irreducible, then
$
\mathcal Q(\mathfrak a_s\cup\mathfrak b)
\simeq
\nu^{-2s}\times\tau.
$
Thus,
$
\pi\simeq\nu^{-2s}\times\tau,
$
which is the representation in Conjecture~\ref{conj}\textup{(1)}.
It therefore admits the required generalized linear period with respect to $(\Ha_{1,n-1},\chi_s)$.

It remains to analyze the cases in which
$\nu^{-2s}\times\tau$ is reducible. Since the multisegment
$\mathfrak a_s$ is supported on the cuspidal line of
$\mathds{1}_1$, reducibility can occur only if the Langlands
multisegment of $\tau$ has a segment on the same cuspidal line that
interacts with an endpoint of $\mathfrak a_s$.
By the classification of the infinite-dimensional
$\Ha_{1,1}$-distinguished representations of $\G_2$, there are two
possibilities that require further consideration:

\begin{enumerate}
\item $\tau$ is an irreducible principal series representation
$\sigma\times\widetilde{\sigma}$ supported on the cuspidal line of
$\mathds{1}_1$;

\item $\tau=\St_2$.
\end{enumerate}

All the remaining distinguished representations are either supported
on a cuspidal line different from that of $\mathds{1}_1$, or their
segments are not linked with the endpoint segments of
$\mathfrak a_s$. Lemma~\ref{l1} then shows that
$\nu^{-2s}\times\tau$ is irreducible, so they have already been
covered by Conjecture~\ref{conj}\textup{(1)}.

\medskip

\noindent\emph{Converse Case B1: $\tau$ is principal series representation.}
Suppose that
$
\tau\simeq\sigma\times\widetilde{\sigma}
$
is an irreducible principal-series representation. If $\sigma$ is
not on the cuspidal line of $\mathds{1}_1$, then no segment in the
multisegment of $\tau$ is linked with a segment in $\mathfrak a_s$.
Thus, the induction is irreducible, and $\pi$ belongs to
Conjecture~\ref{conj}\textup{(1)}.
Suppose, therefore, that
$
\sigma=\nu_{\mathds{1}_1}^{x}
$
for some $x\in\mathbb R$. Then
$
\mathfrak b
=
\left\{
[x]_{(\mathds{1}_1)},
[-x]_{(\mathds{1}_1)}
\right\}.
$
The singleton exponents occurring in $\mathfrak a_s$ are
$$
-s+\frac{n-3}{2},\,
-s+\frac{n-5}{2},\,
\dots,\,
-s-\frac{n-3}{2}.
$$
The two exponents $x$ and $-x$ can extend this consecutive string only
at one of its two endpoints. Therefore, the linked-segment algorithm
gives
$$
x\in
\left\{
s+\frac{n-1}{2},
s-\frac{n-1}{2},
-s+\frac{n-1}{2},
-s-\frac{n-1}{2}
\right\}.
$$
Since replacing $x$ by $-x$ does not change the isomorphism class of
$\tau$, there are only two essentially different possibilities:
$$
\tau=\tau_s^{+}
=
\nu_{\mathds{1}_1}^{s+\frac{n-1}{2}}
\times
\nu_{\mathds{1}_1}^{-\left(s+\frac{n-1}{2}\right)}
~\text{and}~
\tau=\tau_s^{-}
=
\nu_{\mathds{1}_1}^{s-\frac{n-1}{2}}
\times
\nu_{\mathds{1}_1}^{-\left(s-\frac{n-1}{2}\right)}.
$$

Suppose first that $\tau=\tau_s^{+}$. Then
$
\mathfrak a_s\cup\mathfrak b
=
\mathfrak m_s^{+},
$
where $\mathfrak m_s^{+}$ is the multisegment in \eqref{5.6}. Hence, by
\eqref{5.11},
$
\pi\simeq\mathcal Q(\mathfrak m_s^{+}).
$
The representation $\tau_s^{+}$ is irreducible precisely when
$
s\notin
\left\{
-\frac n2,-\frac{n-2}{2}
\right\}.
$
These two excluded values follow from the reducibility condition
$2\left(s+\frac{n-1}{2}\right)=\pm1.
$

If $s\neq0$, the assumptions on $s$ imply that
$$
\mathcal Q(\mathfrak m_s^{+})
\simeq
\nu_{\mathds{1}_1}^{s+\frac{n-1}{2}}
\times
\nu^{-2s-1},
$$
which is the representation in
Conjecture~\ref{conj}\textup{(2)}.

If $s=0$, then
$$
\mathfrak m_0^{+}
=
\left\{
\left[\frac{n-1}{2}\right]_{(\mathds{1}_1)},
\left[\frac{n-3}{2}\right]_{(\mathds{1}_1)},
\dots,
\left[-\frac{n-1}{2}\right]_{(\mathds{1}_1)}
\right\}
=
\mathfrak c_n(0).
$$
Therefore,
$
\pi
\simeq
\mathcal Q(\mathfrak c_n(0))
\simeq
\mathds{1}_n.
$
This is the unique irreducible subrepresentation prescribed in
Conjecture~\ref{conj}\textup{(2)} at $s=0$.

Now suppose that $\tau=\tau_s^{-}$. Then
$
\mathfrak a_s\cup\mathfrak b
=
\mathfrak m_s^{-},
$
and hence
$
\pi\simeq\mathcal Q(\mathfrak m_s^{-}).
$
The representation $\tau_s^{-}$ is irreducible precisely when
$
s\notin
\left\{
\frac{n-2}{2},\frac n2
\right\},
$
because its reducibility condition is
$
2\left(s-\frac{n-1}{2}\right)=\pm1.
$
If $s\neq0$, we obtain
$$
\pi
\simeq
\nu_{\mathds{1}_1}^{s-\frac{n-1}{2}}
\times
\nu^{-2s+1},
$$
which is the representation in
Conjecture~\ref{conj}\textup{(3)}.
If $s=0$, then
$
\mathfrak m_0^{-}
=
\mathfrak c_n(0),
$
so that
$
\pi\simeq\mathds{1}_n.
$
This is the unique irreducible subrepresentation in
Conjecture~\ref{conj}\textup{(3)} at $s=0$.

Thus, every reducible principal-series case produces one of the two
exceptional families or the trivial representation.

\medskip

\noindent\emph{Converse Case B2: $\tau=\St_2$.}
The Steinberg representation $\St_2$ corresponds to the segment
$
\left[-\frac12,\frac12\right]_{(\mathds{1}_1)}.
$
Applying the Jacquet--Langlands transfer gives
$
\JL
\left(
\left[-\frac12,\frac12\right]_{(\mathds{1}_1)}
\right)
=
\left[-\frac32,\frac32\right]_{(\mathds{1})}.
$
Thus, we have
$
\mathfrak L(\St_2)\simeq\Sp_4.
$
Equation \eqref{5.10} becomes
$$
\mathfrak L(\pi)
\simeq
\mathfrak L(\nu^{-2s})\oplus\Sp_4.
$$
The corresponding Langlands multisegment is
$
\mathfrak a_s
\cup
\left\{
\left[-\frac12,\frac12\right]_{(\mathds{1}_1)}
\right\}.
$
By Lemma~\ref{venk},
$
\nu^{-2s}\times\St_2
$
is reducible if and only if
$
s=\pm\frac n2.
$
If $s\neq\pm\frac n2$, then the induction is irreducible, and
injectivity gives
$
\pi\simeq\nu^{-2s}\times\St_2.
$
This representation belongs to
Conjecture~\ref{conj}\textup{(1)}.

Suppose that $s=\frac n2$. Then  the corresponding Langlands multisegment is
$$
\left\{
\left[-n+\frac32\right]_{(\mathds{1}_1)},
\dots,
\left[-\frac32\right]_{(\mathds{1}_1)},
\left[-\frac12,\frac12\right]_{(\mathds{1}_1)}
\right\}.
$$
By Remark~\ref{LnZn}, this is exactly the Langlands multisegment of
$\nu^{-n}\mathcal Q_n$. Hence,
$
\mathfrak L(\pi)
\simeq
\mathfrak L(\nu^{-n}\mathcal Q_n).
$
Injectivity gives
$$
\pi
\simeq
\nu^{-n}\mathcal Q_n
=
\nu^{-2s}\mathcal Q_n.
$$
This is the unique irreducible subrepresentation occurring in
Conjecture~\ref{conj}\textup{(3)} at $s=\frac n2$.

Finally, suppose that $s=-\frac n2$. Then the corresponding Langlands multisegment is
$$
\left\{
\left[\frac32\right]_{(\mathds{1}_1)},
\dots,
\left[n-\frac32\right]_{(\mathds{1}_1)},
\left[-\frac12,\frac12\right]_{(\mathds{1}_1)}
\right\}.
$$
This is the Langlands multisegment of
$\nu^{n}\widetilde{\mathcal Q}_n$. Therefore,
$
\mathfrak L(\pi)
\simeq
\mathfrak L(\nu^{n}\widetilde{\mathcal Q}_n),
$
and injectivity gives
$$
\pi
\simeq
\nu^{n}\widetilde{\mathcal Q}_n
=
\nu^{-2s}\widetilde{\mathcal Q}_n.
$$
This is the unique irreducible subrepresentation occurring in
Conjecture~\ref{conj}\textup{(2)} at $s=-\frac n2$.

We have now shown in every case that $\pi$ belongs to one of the
families in Conjecture~\ref{conj}. Since Conjecture~\ref{conj} is
assumed, $\pi$ admits a generalized linear period with respect to
$(\Ha_{1,n-1},\chi_s)$. This proves the converse implication and
completes the proof.
\end{proof}

\subsection{Lapid--Prasad Conjecture}\label{LPC}
In this subsection, we  prove Theorem \ref{Lapid-Prasad}, thereby confirming the Lapid--Prasad conjecture for the symmetric pair $(\G_n,\Ha_{1,n-1})$; see \cite[Conjecture 2]{Prasad2015} and \cite[Conjecture 1.1]{Kapon2025}. We begin by recalling the conjecture and the
relevant notion of an $L$-packet. If $\pi$ is an irreducible smooth
representation of a connected reductive group $G$, we denote by $\Pi_\pi$ the
$L$-packet containing $\pi$, namely, the set of irreducible admissible
representations of $G$ associated with the same Langlands parameter as $\pi$. 
\begin{conj}[Lapid--Prasad]
Let $G$ be a connected reductive algebraic group defined over a local non-Archimedean field $\F$, let $\theta:G\longrightarrow G$ be an involution defined over $\F$, and let $H=G^\theta$. Let $\pi$ be a smooth irreducible representation of $G$. If $\pi$ is $H$-distinguished, then the $L$-packet of $\pi$ is invariant under the functor
$$\rho\longmapsto\widetilde{\rho}^{\,\theta},$$
where $\widetilde{\rho}$ denotes the contragredient representation and $\rho^\theta$ denotes the twist of $\rho$ by $\theta$.
\end{conj}
\begin{proof}[Proof of Theorem~\ref{Lapid-Prasad}]
For every $\pi\in\Irr(\G_n)$, the associated $L$-packet is of size one,
namely, $\Pi_\pi=\{\pi\}$. 
Therefore, it is enough to prove that $\widetilde{\pi}^{\,\theta}\simeq\pi$
for every $\Ha_{1,n-1}$-distinguished representation $\pi\in\Irr(\G_n)$.
Let $\pi$ be such a representation. By Conjecture~\ref{conj}, either
$\pi=\mathds{1}_n$, or $\pi=\mathds{1}_{n-2}\times\tau$,
where $\tau\in\Irr(\G_2)$ is infinite-dimensional and
$\Ha_{1,1}$-distinguished. Recall that $\theta$ is the involution of $\G_n$
defined by
\[
\theta(g)=\delta_{1,n-1}g\delta_{1,n-1}^{-1},
\qquad g\in\G_n,
\]
where $\delta_{1,n-1}=\diag(1,-I_{n-1})$.
We first observe that twisting by $\theta$ does not change the isomorphism class of a representation of $\G_n$. Let $\pi$ be a representation of $\G_n$ on a vector space $V_\pi$. Define
\[
T:V_\pi\longrightarrow V_\pi,\qquad
T(v)=\pi(\delta_{1,n-1})v.
\]
We claim that $T$ is an intertwining operator between $\pi$ and $\pi^\theta$.
Indeed, for every $g\in\G_n$ and $v\in V_\pi$, we have
\[
\begin{aligned}
T\bigl(\pi(g)v\bigr)
&=\pi(\delta_{1,n-1})\pi(g)v\\
&=\pi(\delta_{1,n-1}g)v\\
&=\pi(\delta_{1,n-1}g\delta_{1,n-1}^{-1})
  \pi(\delta_{1,n-1})v\\
&=\pi^\theta(g)T(v).
\end{aligned}
\]

Thus, $T$ intertwines $\pi$ and $\pi^\theta$. Moreover, it is
bijective, with inverse $T^{-1}=\pi(\delta_{1,n-1}^{-1})$. In fact, since $\delta_{1,n-1}^2=1$, we have $T^{-1}=T$.
Consequently, $T$ gives an isomorphism $\pi^\theta\simeq\pi$. In particular,
$\widetilde{\pi}^{\,\theta}\simeq\widetilde{\pi}.$
It therefore remains to show that $\pi$ is self-contragredient. If $\pi=\mathds{1}_n$, this is immediate. In the second case, the
$\Ha_{1,1}$-distinction of $\tau$ implies that
$\widetilde{\tau}\simeq\tau$.
Since taking the contragredient is compatible with parabolic induction and $\mathds{1}_{n-2}$ is self-contragredient, it follows that
\[
\widetilde{\pi}
 \simeq\widetilde{\mathds{1}_{n-2}\times\tau}
 \simeq\mathds{1}_{n-2}\times\widetilde{\tau}
 \simeq\mathds{1}_{n-2}\times\tau
 =\pi.
\]
Thus, the $L$-packet $\Pi_\pi=\{\pi\}$ is invariant under
$\rho\mapsto\widetilde{\rho}^{\,\theta}$, as required.

\end{proof}
\begin{remark}
The preceding result is specific to the case $s=0$ and does not extend, in
general, to generalized linear periods associated with nontrivial characters
$\chi_s$. Indeed, Conjecture~\ref{conj} predicts that, for $s\notin\left\{-\frac{n}{2},0\right\}$, 
the irreducible representation $\pi_s
 =\nu_{\mathds{1}_1}^{\,s+\frac{n-1}{2}}
  \times\nu^{-2s-1}$
admits a generalized linear period with respect to
$(\Ha_{1,n-1},\chi_s)$. However, $\pi_s$ is not self-contragredient.
Thus, the analogue of the Lapid--Prasad conjecture for representations
admitting a generalized linear period with respect to
$(\Ha_{1,n-1},\chi_s)$ fails for arbitrary values of $s$.
\end{remark}

\subsection*{Acknowledgement} 
The authors thank U. K. Anandavardhanan for his helpful discussion. We also thank Dipendra Prasad for his valuable comments and insightful suggestions.
\bibliographystyle{alpha}
\bibliography{linear}		
\end{document}